\newtheorem{theorem}{Theorem}
\newtheorem{algorithm}{Algorithm}
\newcommand{\dx}{\Delta x}
\newcommand{\dt}{\Delta t}
\newcommand{\uL}{{v_L}}
\newcommand{\uR}{{v_R}}
\newcommand{\sigmaL}{{\sigma_L}}
\newcommand{\sigmaR}{{\sigma_R}}
\newcommand{\vRB}{v_b}
\newcommand{\vRBtilde}{\tilde{v}_b}
\newcommand{\cL}{c_L}
\newcommand{\rhoL}{{\rho_L}}
\newcommand{\kappaL}{{\rhoL\cL^2}}
\newcommand{\cR}{c_R}
\newcommand{\rhoR}{{\rho_R}}
\newcommand{\kappaR}{{\rhoR\cR^2}}
\newcommand{\UL}{{U_L}}
\newcommand{\UR}{{U_R}}
\newcommand{\VL}{{\partial_t{U}_L}}
\newcommand{\VR}{{\partial_t{U}_R}}
\newcommand{\dxL}{{\Delta x_L}}
\newcommand{\dxR}{{\Delta x_R}}
\newcommand{\Dp}{D_{+}}
\newcommand{\Dm}{D_{-}}
\newcommand{\erf}{\operatorname{erf}}
\newcommand{\uLE}[1]{\begin{bmatrix} v_{#1}\\ \sigma_{#1} \end{bmatrix}}
\newcommand{\bogus}[1]{{}}
\newcommand{\av}{\mathbf{ a}}
\newcommand{\bv}{\mathbf{ b}}
\newcommand{\ev}{\mathbf{ e}}
\newcommand{\fv}{\mathbf{ f}}
\newcommand{\gv}{\mathbf{ g}}
\newcommand{\hv}{\mathbf{ h}}
\newcommand{\iv}{\mathbf{ i}}
\newcommand{\nv}{\mathbf{ n}}
\newcommand{\rv}{\mathbf{ r}}
\newcommand{\vv}{\mathbf{ v}}
\newcommand{\wv}{\mathbf{ w}}
\newcommand{\xv}{\mathbf{ x}}
\newcommand{\yv}{\mathbf{ y}}
\newcommand{\Ev}{\mathbf{ E}}
\newcommand{\Gv}{\mathbf{ G}}
\newcommand{\Iv}{\mathbf{ I}}
\newcommand{\half}{{1\over2}}
\newcommand{\Real}{{\mathbb R}}
\newcommand{\zerov}{\mathbf{0}}
\newcommand{\tableFont}{\scriptsize}
\newcommand{\Ic}{{\mathcal I}}
\newcommand{\Rc}{{\mathcal R}}
\newcommand{\Gc}{{\mathcal G}}
\newcommand{\omegav}{\boldsymbol{\omega}}
\newcommand{\mrb}{m_{b}}% mass of the body
\newcommand{\xcm}{x_b}% position of the center of mass
\newcommand{\vcm}{v_b}% position of the center of mass
\newcommand{\vb}{v_b}% position of the center of mass
\newcommand{\xvcm}{\xv_b}% position of the center of mass
\newcommand{\vvcm}{\vv_b}% velocity of the center of mass
\newcommand{\dotxcm}{\dot{x}_b}% position of the center of mass
\newcommand{\dotrb}{\dot{r}_b}% position of the center of mass
\newcommand{\dotvcm}{\dot{v}_b}% position of the center of mass
\newcommand{\dotvb}{\dot{v}_b}% position of the center of mass
\newcommand{\dotxvcm}{\dot\xv_b}% position of the center of mass
\newcommand{\dotvvcm}{\dot\vv_b}% velocity of the center of mass
\newcommand{\grad}{\nabla}
\newcommand{\tableFontSize}{\small}
\newcommand{\num}[2]{#1e#2} % Use this macro to define the format of the numbers in the table
\newcommand{\eem}{e^{(j)}}
\newcommand{\rateLabel}{rate}
\newcommand{\OmegaMatrix}{W}
\newcommand{\Force}{\mathcal{F}}
\newcommand{\Forcev}{\boldsymbol{\Force}}
\newcommand{\Forcevtilde}{\widetilde{\Forcev}}
\newcommand{\Fvtilde}{\Forcevtilde}
\newcommand{\Torque}{\mathcal{T}}
\newcommand{\Torquev}{\boldsymbol{\Torque}}
\newcommand{\Torquevtilde}{\widetilde{\Torquev}}
\newcommand{\Gvtilde}{\Torquevtilde}
\newcommand{\Ma}{A_m}
\newcommand{\mass}{{m_b}}
\newcommand{\force}{\Force}
\newcommand{\Gp}{\Gc_p}% pressure driven piston grid
\newcommand{\rhos}{\bar{\rho}}
\newcommand{\zs}{\bar{z}}
\newcommand{\vs}{\bar{v}}
\newcommand{\sigmas}{\bar{\sigma}}
\newcommand{\zf}{z}
\newcommand{\rb}{r_b} % a point on the body surface
\newcommand{\amp}{{\mathcal{A}}}
\newcommand{\Energy}{\mathcal{E}}% Energy of the fluid
\newcommand{\Area}{\mathcal{A}_b}
\newcommand{\width}{w_b}
\newcommand{\rhoLin}{\hat{\rho}}
\newcommand{\pLin}{\hat{p}}
\newcommand{\vLin}{\hat{v}}
\newcommand{\cLin}{\hat{c}}
\newcommand{\charSpeed}{s}% Char. speed
\newcommand{\charVar}{\chi}% Char. variable
\newcommand{\Avv}{A^{vv}}
\newcommand{\Avw}{A^{v\omega}}
\newcommand{\Awv}{A^{\omega v}}
\newcommand{\Aww}{A^{\omega\omega}}
\newcommand{\avv}{a^{vv}}
\newcommand{\avw}{a^{v\omega}}
\newcommand{\aww}{a^{\omega\omega}}
\newlength{\tfwidth}
\newlength{\tfheight}
\newlength{\tfxa}
\newlength{\tfxb}
\newlength{\tfya}
\newlength{\tfyb}
\newcommand{\trimPlotWithBox}[6]{%
\setlength\fboxsep{0pt}%
\setlength\fboxrule{1.0pt}% border thickness
\fbox{\includegraphics[width=#2, clip, trim=#3 #4 #5 #6]{#1}}%
}
\newcommand{\trimPlotNoBox}[6]{%
\setlength\fboxsep{1pt}% note: make this 1pt and rule thickness zero so box size matches that below
\setlength\fboxrule{0.0pt}% border thickness
\fbox{\includegraphics[width=#2, clip, trim=#3 #4 #5 #6]{#1}}%
}
\newcommand{\trimPlotb}[6]{%
\setlength{\tfwidth}{19.05cm}%   % width of un-clipped fig
\setlength{\tfxa}{\tfwidth*\real{#3}}%
\setlength{\tfxb}{\tfwidth*\real{#4}}%
\setlength{\tfya}{\tfwidth*\real{#5}}%
\setlength{\tfyb}{\tfwidth*\real{#6}}%
% trim=l b r t
\trimPlotWithBox{#1}{#2}{\tfxa}{\tfya}{\tfxb}{\tfyb}%
% \trimPlotWithBox{#1}{#2}{#3}{#4}{#5}{#6}%
}
\newcommand{\trimPlot}[6]{%
\setlength{\tfwidth}{19.05cm}%   % width of un-clipped fig
\setlength{\tfxa}{\tfwidth*\real{#3}}%
\setlength{\tfxb}{\tfwidth*\real{#4}}%
\setlength{\tfya}{\tfwidth*\real{#5}}%
\setlength{\tfyb}{\tfwidth*\real{#6}}%
% trim=l b r t
\trimPlotNoBox{#1}{#2}{\tfxa}{\tfya}{\tfxb}{\tfyb}%
}
\newcommand{\trimFigNoBox}[6]{%
\setlength\fboxsep{1pt}% note: make this 1pt and rule thickness zero so box size matches that below
\setlength\fboxrule{0.0pt}% border thickness
\fbox{\includegraphics[width=#2, clip, trim=#3 #4 #5 #6]{#1}}%
}
\newcommand{\trimFig}[6]{%
\setlength{\tfwidth}{(#2+#2*\real{#3})+#2*\real{#4}}%   % width of un-clipped fig
\setlength{\tfheight}{(#2+#2*\real{#5})+#2*\real{#6}}%
\setlength{\tfxa}{\tfwidth*\real{#3}}%
\setlength{\tfxb}{\tfwidth*\real{#4}}%
\setlength{\tfya}{\tfheight*\real{#5}}%
\setlength{\tfyb}{\tfheight*\real{#6}}%
% trim=l b r t
\trimFigNoBox{#1}{#2}{\tfxa}{\tfya}{\tfxb}{\tfyb}%
}
\begin{document}

\small

\begin{frontmatter}
%LLNL-JRNL-558232
\title{A stable FSI algorithm for light rigid bodies in compressible flow}

\author[llnl]{J. W. Banks\corref{cor1}\fnref{llnlThanks}}
\ead{banks20@llnl.gov}

\author[llnl]{W. D. Henshaw\fnref{llnlThanks}}
\ead{henshaw1@llnl.gov}

\author[llnl]{B. Sj{\"o}green\fnref{llnlThanks}}
\ead{sjogreen2@llnl.gov}

\address[llnl]{Center for Applied Scientific Computing, Lawrence Livermore National Laboratory, Livermore, CA 94551, USA}

\cortext[cor1]{Corresponding author. Mailing address: Center for Applied Scientific Computing, L-422, Lawrence Livermore National Laboratory, Livermore, CA 94551, USA. Phone: 925-423-2697. Fax: 925-424-2477. }

\fntext[llnlThanks]{This work was performed under the auspices of the U.S. Department of Energy (DOE) by
  Lawrence Livermore National Laboratory under Contract DE-AC52-07NA27344 and by 
  DOE contracts from the ASCR Applied Math Program.}

\begin{abstract}
In this article we describe a stable partitioned algorithm that overcomes the {\em added mass} instability
arising in fluid-structure interactions of light rigid bodies and inviscid compressible flow.
The new algorithm is stable even for bodies with zero mass and zero moments of inertia. 
The approach is based on a local characteristic projection of the force on the rigid body 
and is a natural extension of the recently developed algorithm for coupling compressible
flow and deformable bodies~\cite{banks11a,banks12a,sjogreen12}.
The new algorithm advances the solution in the fluid domain with a standard
upwind scheme and explicit time-stepping. 
The Newton-Euler system of ordinary differential equations governing the motion of
the rigid body is augmented by added mass correction terms. This system, which is very stiff for light bodies,
is solved with an A-stable diagonally implicit Runge-Kutta scheme. The implicit system (there is one independent system for each body)
consists of only $3d+d^2$ scalar unknowns in $d=2$ or $d=3$ space dimensions and is fast to solve.
The overall cost of the scheme is thus dominated by the
cost of the explicit fluid solver.
Normal mode analysis is used to prove the stability of the approximation for a one-dimensional
model problem and numerical computations confirm these results. 
In multiple space dimensions the approach naturally reveals the form of the added mass tensors 
in the equations governing the motion of the rigid body.
These tensors, which depend on certain
surface integrals of the fluid impedance, couple the translational and angular velocities of the body.
Numerical results in two space dimensions, based on the use of moving overlapping grids and adaptive mesh refinement,
demonstrate the behavior and efficacy of the new scheme.
These results include the simulation of the difficult problems of shock impingement on an ellipse 
and a more complex body with appendages, both with zero mass.
\end{abstract}

\begin{keyword}
fluid-structure interaction, added mass instability, moving overlapping grids, compressible fluid flow, rigid bodies
\end{keyword}

\end{frontmatter}

%-------------- Introduction
\section{Introduction}
\label{sec:intro}

An important class of fluid-structure interaction (FSI) problems are those
that involve the interaction of moving bodies with high-speed compressible fluids.
For example, understanding the impact of shock or detonation waves
on rigid structures and embedded rigid bodies is of great interest.
The numerical simulation of such
problems can be difficult, and many techniques have been developed to address
various facets of the problem. For a review of FSI see~\cite{bungartz06} for
example.  One particularly challenging aspect has been the presence of numerical
instabilities that can arise when simulating problems with light bodies. This
so-called {\em added-mass} instability is associated with the fact that the
reaction of a body to an applied force depends not only on the mass of the body
but also on the fluid displaced by the body through its motion. Traditional
partitioned FSI schemes do not take into account the strong coupling between the
fluid and solid and thus can exhibit an instability whereby the over-reaction of
a light solid to an applied force from the fluid leads in turn to an even larger
reaction from the fluid and so on.  Fully coupled monolithic approaches to FSI
can overcome the unstable behavior but are generally more expensive, can be
difficult to implement, and may require advanced solvers or preconditioners.
For compressible fluids the instability in partitioned algorithms can often be
suppressed by choosing a smaller time-step (as the analysis in this article
demonstrates). However, the stable time-step goes to zero as the mass of the
body goes to zero and thus alternative approaches to removing the instability
are desirable.

In a recent series of articles, we have developed a set of stable interface
approximations for partitioned solutions procedures that
couple compressible fluids and {\em deformable} bodies~\cite{banks11a,banks12a,sjogreen12}.
In~\cite{banks11a,banks12a} the interface approximation is based on a local characteristic analysis 
that results in an impedance weighted projection of the velocity and forces on the interface.
These methods ensure the stability
of the partitioned FSI scheme even for {\em light} solids.
In this article we extend these ideas to
the coupling of compressible fluids and {\em rigid bodies}.
The key idea presented in this article can be introduced by considering
the equations of motion for a rigid body (the full set of equations are presented in
detail in Section~\ref{sec:NewtonEuler})
\begin{align}
   \mrb \dotvvcm &= \Forcev , \label{eq:centerOfMassVelocityEquationIntro} \\
   A \dot \omegav &= -\omegav\times(A \omegav) + \Torquev , \label{eq:angularVelocityEquationIntro} 
\end{align}
where $\mrb$ is the mass of the body, $\vvcm(t)$ is the velocity of the center
of mass, $\omegav(t)$ the angular velocity and $A$ the moment of inertia tensor.
$\Forcev$ and $\Torquev$ are, respectively, the force and torque on the body arising from the fluid
forces on the surface of the body.  From
Equations~\eqref{eq:centerOfMassVelocityEquationIntro}-\eqref{eq:angularVelocityEquationIntro}
it would at first seem impossible to solve for $\vvcm$ and/or $\omegav$ when
$\mrb=0$ and/or $A=0$, as the equations apparently become singular.  However, from a
local characteristic analysis of the appropriate fluid-structure Riemann problem, we can
determine how $\Forcev$ and $\Torquev$ implicitly depend on the motion
of the body,
\begin{align}
    \Forcev & = -\Avv \vvcm - \Avw \omegav + \Forcevtilde , \qquad
    \Torquev  =  -\Awv \vvcm - \Aww \omegav + \Torquevtilde . \label{eq:genFormIntro}
\end{align}
The matrices $A^{ij}$ are the {\em added-mass} tensors; these are defined in
terms of certain integrals of the fluid impedance over the boundary of the rigid
body (see Section~\ref{sec:multiDimensionalInterface}). 
It is worth pointing out that the concept of added-mass has a long history in describing
the motion of embedded bodies in both compressible and incompressible flows. For the 
compressible regime the recent article~\cite{parmar08} nicely discusses the history
as well as modern developments.

Using the form of Equation (\ref{eq:genFormIntro}) as 
a starting point, we define a partitioned FSI scheme that remains stable with a large
time-step (i.e. the usual time-step restriction associated with
the fluid domain in isolation) even as $\mrb$ or $A$ go to zero, provided the added-mass tensors
satisfy certain properties. This approach relies on the use of an implicit time
stepping method for the evolution of the rigid body, 
but uses standard upwind schemes and explicit time-stepping for the fluid. 
The number of
equations in the rigid body implicit system is small ($3d+d^2$ scalar unknowns in $d=2$ or $d=3$ space dimensions)
and thus does not have any appreciable impact on the cost of the overall algorithm.
The new added-mass scheme is analyzed in detail for a one-dimensional model problem consisting
of a rigid body embedded in a fluid governed by the linearized Euler equations.
Both a first-order accurate upwind scheme and the second-order accurate Lax-Wendroff
scheme are analyzed using normal mode stability theory~\cite{gustafsson72}. When the rigid body is
integrated with an A-stable time-stepping method, the resulting partitioned FSI scheme is shown to be
stable with a large time step  even when $\mrb=0$.

The added-mass scheme is implemented in two space dimensions
using the moving overlapping grid technique described in~\cite{henshaw06}.  In
this approach, local boundary fitted curvilinear grids are used to represent the
bodies and these move through static background grids that are often chosen to
be Cartesian grids for efficiency.  Adaptive mesh refinement (AMR) is used on both curvilinear and Cartesian grids
to dynamically increase resolution locally in space and time.  We solve the compressible Euler equations with explicit time-stepping,
on possibly moving grids, in the fluid domain using a high-order extension of
Goudnov's method. The Newton-Euler equations (with added-mass corrections) are solved for the motion of the
rigid-body using an implicit Runge Kutta scheme (in contrast to the explicit time-stepping method used 
previously in~\cite{henshaw06}).

In general, the added-mass scheme proposed here could be used in conjunction with
any number of FSI approaches.  The treatment of moving geometry is a major
component for coupling fluid flow to the motion of rigid bodies and many
techniques have been considered. One class of methods relies on a fixed
underlying grid for the fluid domain and includes, embedded boundaries~\cite{cirak07}, immersed
boundaries~\cite{borazjani08,gretarsson11}, level sets~\cite{arienti03,barton11}, and fictitious domain methods~\cite{vanloon07}. 
A second class of methods uses body conforming meshes
and allows the mesh to deform in response to the motion of the body. Popular in
this class of methods are ALE~\cite{donea82,lohner99,kuhl03,ahn06}, multiblock~\cite{schafer01}, and general moving unstructured grids~\cite{tezduyar06}.

The remainder of this article is structured as follows. 
In Section~\ref{sec:multiD}, the governing equations of inviscid compressible flow for the fluid, and the 
Newton-Euler equations for rigid body motion are presented. 
Section~\ref{sec:1Dprojection} provides some motivation for, and the derivation of, 
our interface projection scheme in one dimension, showing the origin of the added-mass terms in the
equation of motion for the rigid body.
In Section~\ref{sec:analysis} this approximation is incorporated into a partitioned FSI
scheme for a one-dimensional FSI model problem. The stability of this new added-mass scheme, as well as the traditional
coupling scheme, is analyzed using normal mode theory.
Section~\ref{sec:1Dresults} provides numerical confirmation of the theoretical results for the
one-dimensional problem, demonstrating the expected convergence rates and stability properties.
Extension of the algorithm to multiple space
dimensions is presented in Section~\ref{sec:multiDimensionalInterface} showing the derivation of 
added-mass tensors. The time-stepping procedure for the overlapping grid FSI algorithm is
summarized in Section~\ref{sec:multidimensionalTimeStepping}. Results for two-dimensional
problems are presented in Section~\ref{sec:numericalResults}. 
These include (1) a smoothly receding rigid piston with known solution, (2) a smoothly
accelerated ellipse which is compared to the traditional algorithm, (3) a
shock-driven zero mass ellipse, and (4) a shock impacting a zero-mass body with a complex boundary.
The last two examples, which also demonstrate the
use of adaptive mesh refinement (AMR), are particularly challenging and interesting.
Concluding remarks are given in
Section~\ref{sec:conclusions}. In~\ref{sec:testProblem} we derive the 
exact solutions used in the numerical verification of the one-dimensiomal model problem.
Finally in~\ref{sec:addedMassMatrices} we present the form of the added mass matrices
for a number of simple shapes in two and three dimensions.

%-------------- full governing equations
\section{Rigid bodies and compressible flow in multiple space dimensions} \label{sec:multiD}

In this section we define the governing equations for the fluid domains and the
rigid bodies. The equations are presented in three space dimensions which serves
as a general model. Simplifications to one and two space dimensions, as well as
linearization, will be performed later as appropriate.

\subsection{The Euler equations for an inviscid compressible fluid} \label{sec:NewtonEuler}

We consider the evolution of a compressible inviscid fluid with an embedded rigid body. The governing equations
for the fluid domain $\Omega \subset \Real^3$  are the compressible Euler equations 
\begin{align}
   \partial_t \wv + \grad\cdot \fv(\wv) & = 0, \qquad \xv \in \Omega, \quad t>0, 
\end{align}
where $\wv=[\rho,\rho\vv,\rho \Energy]^T$ is the vector of conserved variables (density, momentum, energy),
$\vv$ is the velocity, and
$\fv=[\rho\vv,\rho\vv\otimes\vv+p\Iv,(\rho \Energy+p)\vv]^T$ is
the flux.  The total energy is given by $\rho
\Energy=p/(\gamma-1)+\frac{1}{2}\rho\vert\vv\vert^2$ assuming an ideal gas with a
constant ratio of specific heats.

\subsection{The Newton-Euler equations for the motion of a rigid body} \label{sec:NewtonEuler}

The equations of motion for the rigid body are the Newton-Euler equations which can be written as
\begin{align}
   \dotxvcm &= \vvcm , \label{eq:centerOfMassPositionEquation} \\
   \mrb \dotvvcm &= \Forcev , \label{eq:centerOfMassVelocityEquation} \\
   A \dot \omegav &= - \OmegaMatrix A \omegav + \Torquev , \label{eq:angularVelocityEquation} \\
   \dot E &= \OmegaMatrix E .  \label{eq:axesOfInertiaEquation}
\end{align}
Here $\mrb$ is the mass of the body, $\xvcm(t)\in\Real^3$ is the position of the center of mass,
and $\vvcm(t)\in\Real^3$ is the velocity of the center of mass. 
The moment of inertia matrix $A\in\Real^{3\times 3}$ is defined by 
\begin{align*}
   A(t) &= \int_{B(t)} \rho_b(\xv)\Big[ \yv^T\yv I - \yv\yv^T \Big] \, d\xv , \quad \yv = \xv-\xvcm, 
\end{align*}
where $\rho_b(\xv)$ defines the mass density of the body and $B(t) \subset
\Real^3$ defines the region occupied by the body. The inertia matrix is
symmetric and positive semi-definite (positive definite if $\rho_b(\xv)>0$) and
can be written in terms of the orthogonal matrix $E\in\Real^{3\times 3}$, whose
columns are the principle axes of inertia, $\ev_i(t)$, and the diagonal matrix
$\Lambda$ whose diagonal entries are the moments of inertia, $I_i$, 
\begin{align*}
A &= E \Lambda E^T, \quad E =[\ev_1~ \ev_2~ \ev_3], \quad A\ev_i = I_i \ev_i, \quad \Lambda ={\rm diag}(I_1,I_2,I_3), \quad \ev_i^T\ev_j=\delta_{ij}.
\end{align*}
The angular momentum of the body is $\hv=A\omegav$ where $\omegav(t)\in\Real^3$ is the angular velocity. The matrix $\OmegaMatrix$
in~\eqref{eq:angularVelocityEquation} is the angular velocity matrix given by 
\begin{align}
  \OmegaMatrix & = \rm{Cross}(\omegav) = \begin{bmatrix}
    0 & -\omega_3 & \omega_2 \\
    \omega_3 & 0 & -\omega_1 \\
    - \omega_2 & \omega_1  & 0 
\end{bmatrix}     , \quad \text{( i.e. $\OmegaMatrix\av = \omegav\times\av$)} .   \label{eq:Wdef}
\end{align}
The total force and torque on the body are given by 
\begin{align}
   \Forcev &= \int_{\partial B} \fv_s\, ds + \fv_b, \quad \text{($\fv_s$ = surface forces, $\fv_b$= body force)}, \label{eq:bodyForce}\\
   \Torquev &= \int_{\partial B} (\xv-\xvcm)\times\fv_s\, ds + \gv_b, \quad \text{(torque)},  \label{eq:bodyTorque}
\end{align}
Given $\Forcev(t)$ and $\Torquev(t)$, along with initial conditions, $\xvcm(0)$, $\vvcm(0)$, $\omegav(0)$, and $E(0)$, 
equations~\eqref{eq:centerOfMassPositionEquation}-\eqref{eq:axesOfInertiaEquation} can be solved
to determine $\xvcm(t)$, $\vvcm(t)$, $\omegav(t)$,  and $E(t)$ as a function of time.

The motion of a point $\rv(t)$ attached to the body is given by a translation together with a rotation about the
initial center of mass,
\begin{align*}
    \rv(t) &= \xvcm(t) + R(t) (\rv(0)-\xvcm(0)) , 
\end{align*}
where $R(t)$ is the rotation matrix given by
\begin{align}
   R(t) &= E(t) E^{T}(0).    \label{eq:rotationMatrix}
\end{align}
The velocity of this point is 
\begin{align*}
    \dot\rv(t) &= \vvcm(t) +\OmegaMatrix R(t) (\rv(0)-\xvcm(0)) , \\
               &= \vvcm(t) +\OmegaMatrix (\rv(t)-\xvcm(t)) , \\
               &= \vvcm(t) +\omegav\times(\rv(t)-\xvcm(t)), 
\end{align*}
Letting $\yv = \yv(\rv) \equiv \rv(t)-\xvcm(t)$ it follows that the velocity of the point $\rv$ can be written in the form
\begin{align}
    \dot\rv(t) &= \vvcm(t) - Y \omegav, \label{eq:rDot}
\end{align}
where $Y(t)$ is the matrix 
\begin{align}
  Y & = \rm{Cross}(\yv) = \begin{bmatrix}
    0 & -y_3 & y_2 \\
    y_3 & 0 & -y_1 \\
    - y_2 & y_1  & 0 
\end{bmatrix} . \label{eq:Ymatrix}
\end{align}

\subsection{The coupling conditions for rigid bodies and inviscid compressible flow}\label{sec:coupling}

On an interface between a fluid and a solid, the normal component of the fluid velocity must match the normal component of
the solid velocity (the inviscid equations allow slip in the tangential direction).
Let $\rv=\rv(t)$ denote a point on the surface of the body $B$, and $\nv=\nv(\rv)$
the outward normal to the body, then
\begin{align}
  \nv^T\dot{\rv}(t) = \nv^T\vv(\rv(t),t).
\end{align}
In addition, the surface force per-unit-area at each point on the body is given by the local force per-unit-area
exerted by the fluid, 
\begin{align}
   \fv_s(\rv(t)) = -\nv \, p(\rv(t),t).
\end{align}

%-------------- interface conditions
\section{A partitioned FSI algorithm for the one-dimensional Euler equations and a rigid body -- added mass terms}\label{sec:1Dprojection}

In the recent series of articles~\cite{banks11a,banks12a,sjogreen12}, a stable
interface projection scheme was developed for the problem of coupling a
compressible fluid and a deformable elastic solid of arbitrary density. 
The key result from~\cite{banks11a,banks12a} can be
distilled from the consideration of a
one-dimensional Riemann problem consisting of a linearized compressible fluid
(equations~\ref{eq:linearizedEuler}) on the right with state
$(\rho_0,v_0,\sigma_0)$, and a linear elastic solid on the left with state
$(\rhos_0,\vs_0,\sigmas_0)$. Arguments based on characteristics were used to
show that for positive times the interface values $(v_I,\sigma_I)$ are given in
terms of an impedance weighted average of the fluid and solid states,
\begin{align}
   v_I & =  { \zs \vs_0 + \zf v_0 \over \zs + \zf } + { \sigma_0 - \sigmas_0 \over \zs + \zf } \label{eq:linearizedFSRImpedanceV}
            , \\
  \sigma_I &= { \zs^{-1} \sigmas_0 + \zf^{-1} \sigma_0 \over \zs^{-1} + \zf^{-1} } 
                  +  {  v_0 - \vs_0 \over \zs^{-1} + \zf^{-1} }. \label{eq:linearizedFSRImpedanceSigma}
\end{align}
Here $\zs=\rhos c_p$ is the solid impedance based on the speed of sound, $c_p$,
for compression waves in the solid,
while $\zf=\rho c$ is the fluid impedance based on the speed of sound, $c$,
in the fluid. In~\cite{banks11a,banks12a} it was 
found that using a projection to impose (\ref{eq:linearizedFSRImpedanceV}) and (\ref{eq:linearizedFSRImpedanceSigma}) 
as interface conditions resulted in a scheme that remained stable, even in the
presence of {\em light} solids when the traditional FSI coupling scheme fails. 
See~\cite{banks11a,banks12a,sjogreen12} for further details. 

The present situation of a rigid body can be considered
through a limit process where $c_p$ becomes large compared to $c$, and the
elastic body becomes increasingly rigid. Taking the
formal limit $\zs/\zf \rightarrow \infty$ in
equations~\eqref{eq:linearizedFSRImpedanceV}-\eqref{eq:linearizedFSRImpedanceSigma},
with $\zf$ fixed, results in\footnote{This limit process could be quite complex and we are
speaking here on informal grounds for motivational purposes.}
\begin{align}
   v_I & =  \vs_0              , \label{eq:interfaceV_limit}\\
  \sigma_I &= \sigma_0 +z ( v_0 - \vs_0 ). \label{eq:interfaceSigma_limit}
\end{align}
Thus for a rigid body, the interface surface stress is equal to the stress from the
fluid plus $z$ times the difference of the fluid velocity and the velocity of the body. The dependence 
of the interface stress, $\sigma_I$,  on the velocity of the body, $\vs_0$,  has thus been exposed.

These interface conditions can be derived more directly by considering the {\em
Riemann-like} problem, shown in Fig.~\ref{fig:fluidRigidBodyCartoon}, that consists of a rigid body of mass $\mrb$ adjacent to a
compressible fluid governed by the linearized Euler equations. Using
characteristic theory, we can write an explicit equation for the motion for the
rigid body in terms of the initial conditions. This process introduces an added
mass term into the equations, and the motion of the body is seen to be well
defined even when $\mrb=0$. The equations are then written in an alternative form
as an interface projection that is localized in space and time. This form can be
used to generalize the approach to multiple dimensions.

Consider then the solution to the linearized one-dimensional Euler equations for an inviscid compressible fluid,
in the moving domain $x>\rb(t)$ as shown in Fig.~\ref{fig:fluidRigidBodyCartoon}, 
\begin{align}
&  \left\{
  \begin{aligned}
  \partial_t\rho + \vLin \partial_x\rho + \rhoLin \partial_x v &=0 \\
  \partial_t v + \vLin \partial_x v - (1/\rhoLin)\partial_x\sigma &=0 \\
  \partial_t\sigma + \vLin \partial_x\sigma - \rhoLin \cLin^2 \partial_x v &= 0
 \end{aligned} 
 \right.
, ~~\text{for $x>\rb(t)$} ,  \label{eq:linearizedEuler}  \\
&  [\rho(x,0),v(x,0),\sigma(x,0)] = [\rho_0(x), v_0(x), \sigma_0(x)] .
\end{align}
Here $\sigma=-p$ is the fluid stress. 
The equations have been linearized about the constant state $[\rhoLin, \vLin, \pLin]$. 
The linearized speed of sound is $\cLin=\sqrt{\gamma \pLin/\rhoLin}$ and the 
the initial conditions are given by $[\rho_0(x), v_0(x), \sigma_0(x)]$.
The fluid is coupled to a rigid body of mass $\mrb$ whose motion is governed by Newton's law of motion
for the velocity, $\vcm$, and the position, $\xcm$, of the center of mass, 
\begin{align}
  \mrb \dotvcm &= \sigma(\rb(t),t) \Area + f_b ,  \label{eq:NewtonEuler1d}\\
    \dotxcm &= \vcm .
\end{align}
Here $\Area$ is the cross-sectional area of the body, $f_b$ is an external body force and $\rb=x_b+\width/2$ defines the point
on the body that lies next to the fluid ($\width$ being the constant width of the body).

%
% Cartoon showing the x-t diagram elastic piston
%
{
\newcommand{\fsi}{G}
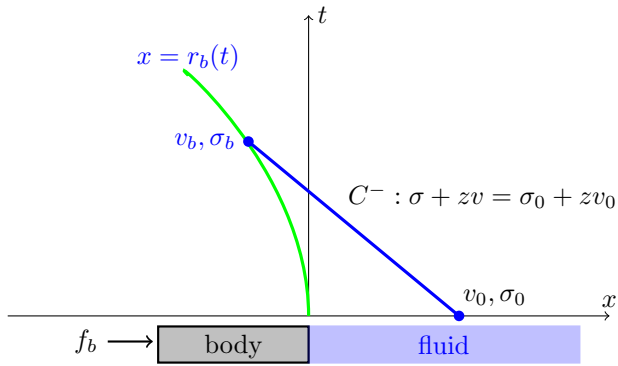
\begin{figure}[hbt]
\begin{center}
\begin{tikzpicture}[scale=4]
\useasboundingbox (-.1,0.0) rectangle (1.,1.);  % set the bounding box (so we have less surrounding white space)
% axes: 
\draw[->] (-1.,0) -- (1.,0) node[anchor=south] {$x$};
\draw[->] (0,0) -- (0,1) node[anchor=west] {$t$} ;
%
% - piston:
\draw[very thick,green] (0,0) .. controls (0,.5) and (-.5,.9) .. (-.4,.8) node[anchor=south,yshift=-2pt,blue] {$x=\rb(t)$};
%
% C- characteristics
\draw[very thick,blue] (.5,0) -- (-.2,.58) node[anchor=east,xshift=-1pt,blue] {$v_b,\sigma_b$};
\fill[blue] (-.2,.58) circle (.5 pt);
\draw (.1,.4) node[anchor=west] {$C^-: \sigma + z v = \sigma_0+z v_0$};
\draw (.5,0.) node[anchor=west,xshift=-2pt,yshift=7pt] {$v_0,\sigma_0$}; 
\fill[blue] (.5,0.) circle (.5 pt);
\begin{scope}[yshift=-1pt]
% -- piston: 
\newcommand{\pcolour}{black}
\newcommand{\xap}{-.5}\newcommand{\xbp}{0.}
\newcommand{\yap}{-.12}\newcommand{\ybp}{0.0}
\draw[thick,\pcolour,fill=\pcolour,opacity=0.25] (\xap,\yap) rectangle (\xbp,\ybp);
\draw[thick,\pcolour] (\xap,\yap) rectangle (\xbp,\ybp);
\draw[thick,black] (-.25,-.065) node {body};
\draw[thick,->,xshift=-0.5pt] (-.65,-.05) node[anchor=east] {$f_b$} -- (\xap,-.05) ;
% -- piston: 
\newcommand{\fcolour}{blue}
\newcommand{\xaf}{0.0}\newcommand{\xbf}{.9}
\newcommand{\yaf}{-.12}\newcommand{\ybf}{0.0}
\draw[thick,\fcolour,fill=\fcolour,opacity=0.25] (\xaf,\yaf) rectangle (\xbf,\ybf);
\draw[thick,blue] (.45,-.06) node {fluid};
\end{scope}
\end{tikzpicture}
\end{center}
\caption{The $x$-$t$ diagram for the one-dimensional fluid/rigid-body problem. 
The interface between the rigid body and fluid follows the curve $x=r_b(t)$. The
characteristic variable $\sigma+z v$ in the fluid is constant along the $C^-$ characteristic curve
and provides a relation between the solid velocity, $v_b$, and stress on the body, $\sigma_b$,
in terms of previous fluid values along the characteristic.} \label{fig:fluidRigidBodyCartoon}
\end{figure}
}

From the theory of characteristics\footnote{These characteristic relations are found by seeking linear combinations of the equations~\eqref{eq:linearizedEuler}
for which the equations reduce to ordinary differential equations along space-time {\em characteristic} curves~\cite{Whitham74}.},
the variable 
$\charVar=\sigma + z v$ is constant along the $C^-$ characteristic $dx/dt = - \charSpeed = \vLin -\cLin$.
Therefore, for a point $\rb(t)$ on the body, $\charVar(\rb,t)=\charVar(\rb+ \charSpeed t,0)$, and thus
\begin{align}
  \sigma(\rb,t) + z v(\rb,t)=\sigma_0(\rb+ \charSpeed t)+z v_0(\rb+ \charSpeed t).
\end{align}
Using the interface condition $v(\rb,t)=\vb(t)$ it follows that the stress on the body is
\begin{align}
   \sigma(\rb,t) &= \sigma_0(\rb+ \charSpeed t) + z\big( v_0(\rb+ \charSpeed t) - v_b \big). \label{eq:stressOnBodyFromChars}
\end{align}

Substituting~\eqref{eq:stressOnBodyFromChars} into~\eqref{eq:NewtonEuler1d} gives an equation
for the motion of the body that only depends on the initial data in the fluid and the external body force,
\begin{align}
  \mrb \dotvb &= \sigma_0(\rb+ \charSpeed t) \Area  + z \Area \big( v_0(\rb+ \charSpeed t) - v_b \big) + f_b(t) , 
                  \label{eq:rigidBody1d} \\
     \dotrb &= \vb .
\end{align}
This equation can be written in the form,
\begin{align}
  \mrb \dotvb + z \Area \vb &= \sigma_0(\rb+\charSpeed t) \Area + z \Area v_0(\rb+\charSpeed t) + f_b(t) , 
           \label{eq:rigidBody1dII}  \\
     \dotrb &= \vb , \label{eq:rigidBody1dIIx}
\end{align}
where the {\em added mass term} $z \Area \vb$ has been moved to the left-hand side. 
Note that equations~\eqref{eq:rigidBody1dII}-\eqref{eq:rigidBody1dIIx}
can be used to solve for $\vb$ {\em even when $\mrb=0$} (provided $z \Area >0$).
By using an ODE integration scheme that treats the added mass term $z \Area  \vb$ implicitly, equation~\eqref{eq:rigidBody1dII}
can be used to evolve the rigid body with a time step that need not go to zero as $\mrb$ goes to zero. 

In practical implementation, it is often beneficial to localize~\eqref{eq:stressOnBodyFromChars} in space and time.
Using $\charVar(r_b,t)=\charVar(r_b+s\epsilon,t-\epsilon)$ along the $C^-$ characteristic 
gives
\begin{align}
   \sigma(\rb,t) &= \sigma(\rb+s\epsilon,t-\epsilon) + z\big( v(\rb+s\epsilon,t-\epsilon) - \vb(t) \big), \label{eq:stressOnBodyFromCharsLocal}
\end{align}
and letting $\epsilon\rightarrow 0$ leads to the relation
\begin{align}
   \sigma(\rb,t) &= \sigma(\rb+,t-) + z\big( v(\rb+,t-) - \vb(t) \big). \label{eq:stressOnBodyFromCharsLocal}
\end{align}
Here $\sigma(\rb+,t-)$ and $v(\rb+,t-)$ denote the stress and velocity in the fluid at a point
which lies an infinitesimal distance backward along the $C^-$ characteristic.
Equation~\eqref{eq:stressOnBodyFromCharsLocal} is in a form that can be used in an
interface projection strategy and can be generalized
to a multidimensional problem as is done in Section~\ref{sec:multiDimensionalInterface}. 
Furthermore, notice the similarity of (\ref{eq:stressOnBodyFromCharsLocal}) to equation (\ref{eq:interfaceSigma_limit}).
This hints at the close connection between (\ref{eq:stressOnBodyFromCharsLocal}) and the projection
schemes evaluated in~\cite{banks11a,banks12a,sjogreen12} for coupling compressible fluids and deformable bodies.

%-------------- model problem and analysis
\section{Normal mode stability analysis of the one-dimensional FSI model problem} \label{sec:analysis}
In order to understand the stability of a numerical scheme that uses the new interface conditions
\eqref{eq:stressOnBodyFromCharsLocal}, consider the one-dimensional model problem of a rigid 
body confined on either side by an
inviscid compressible fluid, as shown in Fig.~\ref{fig:modelFig}.
As in~\cite{banks11a} we can linearize and freeze
coefficients about a reference state to arrive at a problem where the equations
of acoustics govern the two fluids, and Newtonian mechanics govern the motion of
the solid. 
As shown in Fig.~\ref{fig:modelFig}, the body has a width of $\width$ and its cross-sectional area is assumed to be $1$.
Note that the equations for the fluids are defined in fixed reference coordinates, $x<-\width/2$ and
$x>\width/2$.
\newcommand{\leftState}{$\uLE{L}$}
\newcommand{\rbState}{{$\vRB$}}
\newcommand{\rightState}{$\uLE{R}$}
%- 

\begin{figure}[hbt]
\begin{center}
\begin{tikzpicture}[scale=1]
\useasboundingbox (-1,-1.25) rectangle (13,2);  % set the bounding box (so we have less surrounding white space)
\draw[thick,red,fill=red,opacity=0.25] (0,0) rectangle (4,1);
\draw[thick,red] (0,0) rectangle (4,1);
\draw[thick,black] (2,0.5) node {fluid (acoustics)};
\draw[thick,blue,fill=blue,opacity=0.25] (8,0) rectangle (12,1);
\draw[thick,blue] (8,0) rectangle (12,1);
\draw[thick,black] (10,0.5) node {fluid (acoustics)};
\draw[thick,green,fill=green,opacity=0.25] (4,0) rectangle (8,1);
\draw[thick,green] (4,0) rectangle (8,1);
\draw[thick,black] (6,0.5) node {solid (rigid body)};
\begin{scope}[yshift=-1.25cm]
\draw[black] ( 2,0) node {$\uLE{L}_i$};
\draw[black] ( 6,0) node {\rbState};
\draw[black] (10,0) node {$\uLE{R}_i$};
\end{scope}
\draw[thick,black,<->] (-1,1.25)--(13,1.25);
\draw[black] (13,1.5) node {$x$};
\draw[thick,black] (4,1.15)--(4,1.35);
\draw[thick,black] (6,1.15)--(6,1.35);
\draw[thick,black] (8,1.15)--(8,1.35);
\draw[black,xshift=-4pt] (4,1.75) node {$-\width/2$};
\draw[black] (6,1.75) node {$0$};
\draw[black] (8,1.75) node {$\width/2$};
\def\ya{-.25}
\begin{scope}[xshift=8cm,yshift=-.25cm]
\draw[-,thick,black,,yshift=-.025cm] 
   (-.5,0) -- (4.25,0) 
   \foreach \x in {-.5,.5,...,4.}{ (\x,-.05) -- (\x,.05) }
   (-.5,\ya) node {$0$}
   (.5,\ya) node {$1$}
   (1.5,\ya) node {$2$}
   (2.5,\ya) node {$3$}
   ( 3.5,\ya) node {$\ldots$}; 
\end{scope}
\begin{scope}[xshift=0cm,yshift=-.25cm]
\draw[-,thick,black,,yshift=-.025cm] 
   (-.5,0) -- (4.5,0) 
   \foreach \x in {.5,1.5,...,4.5}{ (\x,-.05) -- (\x,.05) }
   (4.5,\ya) node {$0$}
   (3.5,\ya) node {$-1$}
   (2.5,\ya) node {$-2$}
   (1.5,\ya) node {$-3$}
   ( 0.5,\ya) node {$\ldots$}; 
\end{scope}
\end{tikzpicture}
\end{center}
\caption{Schematic of the one-dimensional FSI model problem used in the stability analysis. A solid rigid body
is embedded between a fluid domain on the left and a fluid domain on the right.
The boundaries of the rigid
body are located mid-way between the ghost points of the fluid grids with index $i=0$ and the first grid point inside the domain with
index $i=-1$ on the left and $i=1$ on the right.} \label{fig:modelFig}
\end{figure}
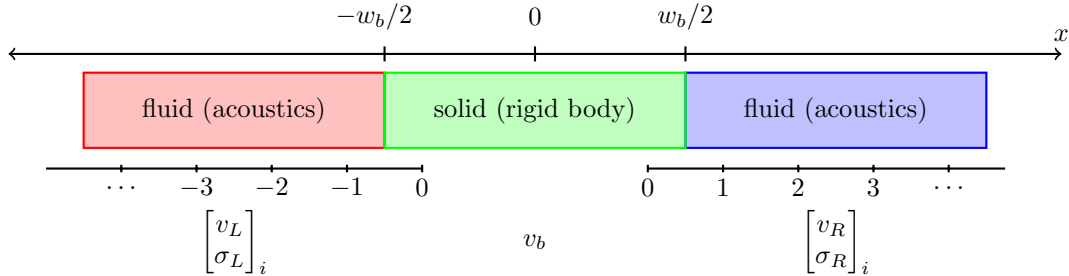
More specifically, the governing equations for the fluid in the left domain are given by
\begin{equation}
  \frac{\partial}{\partial t}  \begin{bmatrix}
    \uL\\
    \sigmaL
  \end{bmatrix} -
  \begin{bmatrix}
    0 \quad& \frac{1}{\rhoL} \\
    \kappaL \quad& 0
  \end{bmatrix}
  \frac{\partial}{\partial x}\begin{bmatrix}
    \uL\\
    \sigmaL
  \end{bmatrix}  =0,
  \quad\hbox{for} \quad x < -\frac{\width}{2},
  \label{eq:LELeft}
\end{equation}
while those for the fluid in the right domain are
\begin{equation}
  \frac{\partial}{\partial t}\begin{bmatrix}
    \uR\\
    \sigmaR
  \end{bmatrix} -
  \begin{bmatrix}
    0 \quad& \frac{1}{\rhoR} \\
    \kappaR \quad& 0
  \end{bmatrix}
  \frac{\partial}{\partial x}\begin{bmatrix}
    \uR\\
    \sigmaR
  \end{bmatrix} =0,
  \quad\hbox{for} \quad x > \frac{\width}{2}. 
  \label{eq:LERight}
\end{equation}
The motion of the rigid body is governed by
\begin{equation}
 \mass \dot{v}_b=\force, 
 \label{eq:RBMotion}
\end{equation}
where the force exerted on the rigid body by the fluid is
\begin{equation}
  \force =\left.\sigmaR\right|_{x=\width/2}-\left.\sigmaL\right|_{x=-\width/2}.\label{eq:ic1}
\end{equation}
The system is closed using interface conditions at $x=\pm \width/2$ which enforce continuity of velocity, namely
\begin{align}
  \left.\uL \right|_{x=-\width/2} & = \vRB,  \label{eq:ic2}\\
  \left.\uR \right|_{x=\width/2} & = \vRB.\label{eq:ic3}
\end{align}
Notice that the problem is posed in a moving reference frame (which we
call $x$), and the frame attached to the rigid body can be
calculated as
\begin{equation*}
    \hat{x} = x+\int_0^t \vRB(\tau) \, d\tau.
\end{equation*}

%%%%%%%%%%%%%%%%%
\subsection{A first-order accurate numerical discretization of the model problem}
\label{sec:simpleInterior}
This section describes the discretization of the governing equations~\eqref{eq:LELeft}-\eqref{eq:RBMotion}
to first-order accuracy.
As in~\cite{banks11a}, Godunov style upwind schemes will be used to discretize the fluid domains. 
We will analyze and demonstrate the properties of these schemes when combined with various discrete interface conditions. The finite difference grid for the discretization of the one-dimensional 
problem is outlined in Fig.~\ref{fig:modelFig}. 
Note that the left and right boundaries of the rigid body are located at the mid-point of computational cells.
This choice is made for convenience, but is not critical to the analysis.
The grid points to the left of the rigid body are denoted by
\begin{equation*}
  x_{L,i} = -\frac{\width}{2}+\left(i+\frac{1}{2}\right)\dxL,\quad i=\ldots, -2,-1,0,
\end{equation*}
and to the right by 
\begin{equation*}
  x_{R,i} = \frac{\width}{2}+\left(i-\frac{1}{2}\right)\dxR,\quad i=0,1,2,\ldots .
\end{equation*}
Ghost points, corresponding to index $i=0$ for both domains, will be used to enforce the interface conditions. 

Let $z_k=\rho_k c_k$ denote the acoustic impedance in domain $k=L,R$. 
The eigen-decomposition of the matrices in (\ref{eq:LELeft}) and (\ref{eq:LERight}) is given by 
\begin{equation}
   C_k \equiv
  \begin{bmatrix}
    0 & \frac{1}{\rho_k}\\
  \rho_kc_k^2 & 0
  \end{bmatrix}=
  R_k\Lambda_k R^{-1}_k, \quad
  R_k = c_k \begin{bmatrix}
    -1 & 1 \\
    z_k & z_k
  \end{bmatrix}, \quad
  \Lambda_k = 
   \begin{bmatrix}
    -c_k & 0\\
   0 & c_k
  \end{bmatrix}, \quad
  R^{-1}_k = \frac{1}{2 c_k z_k}
    \begin{bmatrix}
      -z_k & 1 \\
       z_k & 1 
  \end{bmatrix}. 
  \label{eq:eigenDecomp}
\end{equation}
Let $v_{k,i}^n\approx v_k(x_{k,i},t^n)$ and $\sigma_{k,i}^n\approx \sigma_k(x_{k,i},t^n)$ denote
discrete approximations to the velocity and stress at time $t^n=n\dt$. We also use the notation
\[
   \uLE{k}_i^{n} \equiv \begin{bmatrix} v_{k,i}^n \\ \sigma_{k,i}^n \end{bmatrix}.
\]
The first-order accurate upwind scheme is given by
\begin{equation}
    \uLE{k}_i^{n+1} =  
    \uLE{k}_i^{n} 
    +\Delta t R_k\Lambda_k^{-}R_k^{-1}
    \Dm \uLE{k}_i^n
    +\Delta tR_k\Lambda_k^{+}R_k^{-1}
      \Dp \uLE{k}_i^n  , 
  \label{eq:discreteInterior_fo}
\end{equation}
for $i=\ldots,-2,-1$ on the left and for $i=1,2\ldots$ on the right.
The negative and positive parts of the wave speed matrices are defined by
\begin{equation}
  \Lambda_k^{-} = 
  \left[\begin{array}{cc}
    -c_k & 0\\
   0 & 0
  \end{array}\right] \quad\hbox{and}\quad
  \Lambda_k^{+} = 
  \left[\begin{array}{cc}
    0 & 0\\
   0 & c_k
  \end{array}\right] , 
\end{equation}
respectively. The forward and backward divided difference operators are defined by $D_+u_i=(u_{i+1}-u_i)/\Delta x$ and 
$D_-u_i = D_+u_{i-1}$, where $\dx$ is taken for the appropriate domain.

The methods we consider can be presented using a unified notation.
Motivated by the discussion in Section~\ref{sec:1Dprojection}, the interface stresses at a time $t^{n+1}$ for the first-order scheme are defined by
\begin{align}
  \sigma_{I,L}^{n+1}  = \sigma_{L,-1}^{n+1}+{\alpha_{L}}\left(\vRB^{n+1}- v_{L,-1}^{n+1}\right)  \label{eq:genStressLeft} \\
  \sigma_{I,R}^{n+1}  = \sigma_{R,1}^{n+1}+{\alpha_{R}}\left(v_{R,1}^{n+1} - \vRB^{n+1}\right). \label{eq:genStressRight}
\end{align}
where $\alpha_L$ and $\alpha_R$ are parameters that can be used to obtain
various discrete interface conditions. The traditional interface coupling approach found in the
literature can be described in words as applying the velocity from the solid as
a boundary condition on the fluids, and applying the stress in the fluid to
derive the applied force on the body. This condition is achieved by setting
$\alpha_k = 0$. Our new projection interface condition is given by setting
$\alpha_k= z_k$.

The solution state in the ghost cells at $t^{n+1}$ is defined to first-order accuracy by imposing continuity of the velocity at the interfaces
\begin{align}
  v_{L,0}^{n+1} = \vRB^{n+1}, \label{eq:vLeftFO}\\
  v_{R,0}^{n+1} = \vRB^{n+1}, \label{eq:vRightFO}
\end{align}
and extrapolation of the stress to first-order accuracy as
\begin{align}
  \sigma_{L,0}^{n+1} = \sigma_{I,L}^{n+1}, \label{eq:stressLeftFO}\\
  \sigma_{R,0}^{n+1} = \sigma_{I,R}^{n+1}. \label{eq:stressRightFO}
\end{align}
The rigid body equations~\eqref{eq:RBMotion} are advanced in time with the backward Euler scheme,
\begin{equation}
  \mass\vRB^{n+1} = \mass\vRB^n+{\Delta t} \force^{n+1}
  \label{eq:BE}
\end{equation}
where the force at $t^{n+1}$, $\force^{n+1}$, is defined as
\begin{equation}
  \Force^{n+1} = \sigma_{I,R}^{n+1}-\sigma_{I,L}^{n+1}.
  \label{eq:force1D}
\end{equation}

The backward Euler method is used here in order to simplify the analysis. Used
in isolation, the backward-Euler scheme is unconditionally stable for any $\dt$
independent of $\mass$ provided $\mass>0$. We will show, however, that the fully
coupled FSI problem has a time-step restriction that depends on $\mass$ for the
{\em traditional} interface coupling scheme. For the new interface projection
scheme we show that there is no dependence of the stable time step on $\mass$.
The backward-Euler scheme is, of course, only first-order accurate.  For
higher-order accuracy one can use implicit Runge-Kutta schemes, as described in
Section~\ref{sec:multidimensionalTimeStepping} where we extend the scheme to
multiple space dimensions.  Note that while implicit schemes may be more
expensive per time-step than explicit schemes, they are only used to solve
the rigid body equations which consist of just a few ODEs. As an alternative to
implicit schemes, one can consider using an explicit scheme with a sub-cycling
approach (i.e. taking multiple sub-steps with a smaller value for $\dt$).  Some
remarks on these issues will be provided in subsequent discussions.

In summary, to advance one time level from $t^{n}$ to $t^{n+1}$ using the first-order accurate scheme, the following steps can be followed
\begin{algorithm}\label{alg:firstOrder}
~~\\
\begin{enumerate}
\item Compute $\uLE{L}_i^{n+1}$ for $i=\ldots,-2,-1$ and $\uLE{R}_i^{n+1}$ for $i=1,2,\ldots$ by (\ref{eq:discreteInterior_fo}).
\item Set $\Force^{n+1} = \sigma_{R,1}^{n+1}+\alpha_R(v_{R,1}^{n+1}-\vRB^{n+1}) - 
      \sigma_{L,-1}^{n+1} - \alpha_L(\vRB^{n+1}- v_{L,-1}^{n+1})$, and 
     solve (\ref{eq:BE}) for $\vRB^{n+1}$, 
\begin{equation}
\vRB^{n+1} = \Big[\mass+{\Delta t}(\alpha_L+\alpha_R)\Big]^{-1}\Big[\mass\vRB^n + 
   {\Delta t}\Big(  \sigma^{n+1}_{R,1} +\alpha_R v_{R,1}^{n+1} - 
                            (\sigma^{n+1}_{L,-1}-\alpha_L v_{L,-1}^{n+1})\Big)\Big].
\label{eq:eqforv}
\end{equation}
\item Define the ghost point values at the new time $t^{n+1}$ by the velocity boundary conditions (\ref{eq:vLeftFO}) and (\ref{eq:vRightFO}), along with the stress extrapolations (\ref{eq:stressLeftFO}) and (\ref{eq:stressRightFO}).
\end{enumerate}
\end{algorithm}

%%%%%%%%%%%%%
\subsection{Normal mode analysis of the first-order accurate scheme}\label{sec:normalMode}
Next, we analyze the stability of the interface discretizations, and investigate
how the choice of $\alpha_L$ and $\alpha_R$ affect the behavior of the overall
numerical method. To simplify the presentation, assume $c_L=c_R=c$,
$\rhoL=\rhoR=\rho$, $\dxL=\dxR=\dx$, and $\alpha_L=\alpha_R=\alpha$. In addition
set $z=z_L=z_R$. These assumptions are purely for convenience and clarity, and
do not materially change the results of the analysis. We pursue a stability
analysis via the normal mode ansatz of Gustafsson Kreiss and Sundstr{\"o}m~\cite{gustafsson72}.

As was done in~\cite{banks11a}, we seek normal mode solutions of the form
\begin{equation}
  \uLE{k}_i^n = \amp^n
  \begin{bmatrix}
    \tilde{v}_k\\
    \tilde{\sigma}_k
  \end{bmatrix}_i, \qquad
  \vRB^n = \amp^n \vRBtilde, \qquad \mbox{for $k=L,R$}, \label{eq:normalmode}
\end{equation}
where $\tilde{v}_{k,i}$ and $\tilde{\sigma}_{k,i}$ are bounded functions of space, 
and $\amp$, the amplification factor, is a complex scalar with $|\amp|>1$. 
If such a non-zero solution can be found (for given values of the parameters $\lambda$, $\dt$, $\mass$, $z$, etc.) then
there are solutions that grow in time and we say that the scheme is unstable for those parameters. We note that more
general definitions of stability allow some bounded growth in time, but for our purposes here we use this more
restrictive definition.
Characteristic normal modes are denoted by
\begin{equation}
  \begin{bmatrix}
    a_k\\
    b_k
  \end{bmatrix}_i = R_k^{-1}
  \begin{bmatrix}
    \tilde{v}_k\\
    \tilde{\sigma}_k
  \end{bmatrix}_i
  = 
    \frac{1}{2 c  z }  \begin{bmatrix}
    \tilde{\sigma}_k - z \tilde{v}_k \\
    \tilde{\sigma}_k + z \tilde{v}_k
  \end{bmatrix}_i.
  \label{eq:char_normal_modes}
\end{equation}
Insertion of (\ref{eq:normalmode}) into the finite difference scheme (\ref{eq:discreteInterior_fo}) leads to 
\begin{equation}
  \left.
  \begin{array}{lcl}
    \amp a_{L,i} & = & a_{L,i} - \lambda \left(a_{L,i}-a_{L,i-1}\right) \medskip  \\
    \amp b_{L,i} & = & b_{L,i} + \lambda \left(b_{L,i+1}-b_{L,i}\right) 
  \end{array}
  \right\} \qquad \hbox{for $i=\ldots,-3,-2,-1$} 
  \label{eq:diff_left_first}
\end{equation}
and
\begin{equation}
  \left.
  \begin{array}{lcl}
    \amp a_{R,i} & = & a_{R,i} - \lambda \left(a_{R,i}-a_{R,i-1}\right)  \medskip  \\
    \amp b_{R,i} & = & b_{R,i} + \lambda \left(b_{R,i+1}-b_{R,i}\right)
  \end{array}
  \right\} \qquad \hbox{for $i=1,2,3,\ldots$} 
  \label{eq:diff_right_first}
\end{equation}
where $0 < \lambda = {c\dt}/{\dx} \le 1.$
Define the quantity 
\[
  r=\frac{\amp-1+\lambda}{\lambda} . 
\]
We see that $|r| > 1$ by rewriting $|r|^2>1$ in terms of the polar variables $R$ and $\theta$, where $\amp = R e^{i\theta}$. By simple algebraic 
manipulations, $|r|^2 > 1$ can be rewritten as
\[
  (R-1)^2 + 2\lambda(R-1) + 2R(1-\lambda)(1-\cos\theta) > 0,
\]
which is true since $R>1$ and $\lambda<1$. 

For the two components on characteristics coming in from infinity, the solution to the difference equations (\ref{eq:diff_left_first}) and (\ref{eq:diff_right_first}) is
\begin{alignat*}{3}
  a_{L,i} & = r^{-(i+1)} a_{L,-1}, \qquad&& \hbox{ for $i=\ldots,-3,-2,-1$}, \\
  b_{R,i} & =  r^{(i-1)} b_{R,1},  \qquad&& \hbox{ for $i=1,2,3,\ldots$ }.
\end{alignat*}
The assumption of boundedness as $i\rightarrow \pm\infty$ gives $a_{L,i} = 0$ for $i\ldots,-3,-2,-1$, and $b_{R,i}=0$ for $i=1,2,3,\ldots$. Note that $a_{L,0}$ and $b_{R,0}$ do not play a role in the difference equations (\ref{eq:diff_left_first}) and (\ref{eq:diff_right_first}), but their values can be determined algebraically using the interface conditions 
\begin{align*}
  a_{L,0} & = \frac{\alpha-z}{2 z} (\vRBtilde/c -b_{L,-1}) , \\
  b_{R,0} & = \frac{z-\alpha}{2 z} (\vRBtilde/c+ a_{R,1}).
\end{align*}
The remainder of the solution to difference equations (\ref{eq:diff_left_first}) and (\ref{eq:diff_right_first}) is given by 
\begin{alignat}{3}
  a_{R,i} & = r^{-i} a_{R,0}, \qquad&& \hbox{ for $i=0,1,2,3,\ldots$}, \label{eq:alsol} \\
  b_{L,i} & =  r^i b_{L,0},  \qquad&& \hbox{ for $i=\ldots,-3,-2,-1,0$ }. \label{eq:brsol}
\end{alignat}
The solutions (\ref{eq:alsol}) and (\ref{eq:brsol}) are bounded because $|r| > 1$. The definition of the characteristic normal modes on the interior yields
\begin{equation}
  \begin{bmatrix}
    \tilde{v} \\ \tilde{\sigma}
  \end{bmatrix}_{L,i}  = 
  c \begin{bmatrix}
    1 \\ z 
  \end{bmatrix}
  r^{i}b_{L,0}\qquad \hbox{ for $i=\ldots,-3,-2,-1$ } ,
  \label{eq:ulsol}
\end{equation}
and
\begin{equation}
  \begin{bmatrix}
    \tilde{v} \\ \tilde{\sigma}
  \end{bmatrix}_{R,i}  = 
  c \begin{bmatrix}
    -1 \\ z
  \end{bmatrix}
  r^{-i}a_{R,0} \qquad \hbox{ for $i=1,2,3,\ldots$}.
  \label{eq:ursol}
\end{equation}
The three undetermined constants $b_{L,0}$, $a_{R,0}$, and $\vRBtilde$ are defined by application of the interface conditions (\ref{eq:vLeftFO})-(\ref{eq:stressRightFO}) and the rigid body integrator (\ref{eq:BE}). This leads to the linear system of equations
\begin{equation}
  \begin{bmatrix}
    \displaystyle{1 + \frac{\alpha-z}{2 z r}} & 0 & \displaystyle{-\frac{\alpha+z}{2z}} \medskip \\
    0 & \displaystyle{1 + \frac{\alpha-z}{2 z r}} & \displaystyle{\frac{\alpha+z}{2 z}} \medskip \\
    \displaystyle{\frac{\amp\dt}{r}\left(z-\alpha\right)} & \displaystyle{-\frac{\amp\dt}{r}\left(z-\alpha\right)} & \displaystyle{\mass(\amp-1)+2\dt\alpha \amp}
  \end{bmatrix}
  \begin{bmatrix}
    b_{L,0}  \\ a_{R,0} \medskip \\ \vRBtilde/c
  \end{bmatrix}
  = 0.
  \label{eq:eig}
\end{equation}
The system (\ref{eq:eig}) is an eigenvalue problem for $\amp$, in
the sense that if there is an $\amp$ such that the determinant of the system is zero, then there exists 
a non-trivial solution of the form (\ref{eq:normalmode}). If, furthermore, $|\amp|>1$, then the 
solution (\ref{eq:normalmode}) grows in time. 
% ----------------------------------------- THEOREM -----------------------------------------------------
\begin{theorem}
The numerical scheme using the interior discretizations (\ref{eq:discreteInterior_fo}), 
interface conditions (\ref{eq:vLeftFO})-(\ref{eq:stressRightFO}), rigid body integrator (\ref{eq:BE}) 
and projections (\ref{eq:genStressLeft})-(\ref{eq:genStressRight}) with $\alpha=z$ has no 
eigenvalues $\amp$ with $|\amp|>1$ for $\lambda \le 1$ and $\mass \ge 0$.
  \label{thm:projectedFO}
\end{theorem}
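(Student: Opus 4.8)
The plan is to exploit the dramatic simplification of the eigenvalue system \eqref{eq:eig} that occurs when $\alpha=z$ and then read off the single eigenvalue in closed form. First I would substitute $\alpha=z$ into the $3\times3$ matrix of \eqref{eq:eig}: the corrections $\frac{\alpha-z}{2zr}$ in the $(1,1)$ and $(2,2)$ entries vanish, the coupling terms $\frac{\amp\dt}{r}(z-\alpha)$ in the last row both vanish, and $\frac{\alpha+z}{2z}$ reduces to $1$, leaving the upper-triangular matrix
\[
\begin{bmatrix}
1 & 0 & -1 \\
0 & 1 & 1 \\
0 & 0 & \mass(\amp-1)+2\dt z\,\amp
\end{bmatrix},
\]
whose determinant is $\mass(\amp-1)+2\dt z\,\amp=(\mass+2\dt z)\,\amp-\mass$.

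Next I would locate the zeros of this determinant. Since $\dt>0$ and the fluid impedance satisfies $z>0$, it is a genuine degree-one polynomial in $\amp$ with the single root $\amp=\mass/(\mass+2\dt z)$; because $\mass\ge0$ this root lies in $[0,1)$, so $|\amp|<1$. Hence no $\amp$ with $|\amp|>1$ can make the determinant vanish, which is exactly the assertion of the theorem. I would also remark in passing that this argument in fact yields unconditional stability: the body-velocity normal mode decays like $\big(\mass/(\mass+2\dt z)\big)^n$, independently of $\mass$ --- including the limiting case $\mass=0$ --- and of $\lambda$.

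Finally I would confirm that the $3\times3$ system \eqref{eq:eig} is the complete characterization of candidate growing modes for every $\amp$ with $|\amp|>1$ and every $\lambda\le1$. This reduces to two facts already established in the text preceding the theorem: (i) $r\ne0$, so the ghost-point relations for $a_{L,0}$ and $b_{R,0}$ are well defined, and (ii) $|r|>1$, so the interior solutions \eqref{eq:ulsol}--\eqref{eq:ursol} are the only bounded ones, leaving exactly the three unknowns $b_{L,0}$, $a_{R,0}$, $\vRBtilde$. Both follow from the inequality $(R-1)^2+2\lambda(R-1)+2R(1-\lambda)(1-\cos\theta)>0$ proved there, which gives $|r|>1$ for $\lambda\le1$ (and hence $r\ne0$). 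The step needing the most care is therefore just this bookkeeping --- verifying that the decoupling at $\alpha=z$ is exact and that no normal mode outside the separable ansatz \eqref{eq:normalmode} is overlooked --- since the substantive stability content is already carried by the boundedness argument behind \eqref{eq:eig}; I do not anticipate a genuinely hard step.
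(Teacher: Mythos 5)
Your proposal is correct and follows the paper's own proof essentially verbatim: substitute $\alpha=z$ into \eqref{eq:eig}, observe the system becomes upper triangular, and read off the single root $\amp=\mass/(\mass+2\dt z)\in[0,1)$, whence $|\amp|<1$ for $\mass\ge 0$, $\dt>0$, $z>0$. The closing bookkeeping about why \eqref{eq:eig} captures all candidate growing modes simply recapitulates the derivation already carried out in the text, so nothing new is needed.
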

\begin{proof}
For $\alpha=z$ the eigenvalue problem (\ref{eq:eig}) reduces to 
\begin{equation}
  \begin{bmatrix}
    \displaystyle{1} & 0 & -1  \\
    0 & \displaystyle{1} & 1  \\
    0 & 0 & \displaystyle{\mass(\amp-1)+2\dt z \amp}
  \end{bmatrix}
  \begin{bmatrix}
    b_{L,0} \medskip \\ a_{R,0} \medskip \\ \vRBtilde/c
  \end{bmatrix}
     = 0.
\end{equation}
The determinant is zero when $\amp= \mass/(\mass+2\dt z)$.
By assumption, $\dt > 0$ and $z>0$ and so $|\amp| < 1$. 
\end{proof}

% ----------------------------------------- THEOREM -----------------------------------------------------
\begin{theorem}
The numerical scheme using the interior discretizations (\ref{eq:discreteInterior_fo}), interface 
conditions (\ref{eq:vLeftFO})-(\ref{eq:stressRightFO}), rigid body integrator (\ref{eq:BE}) and 
projections (\ref{eq:genStressLeft})-(\ref{eq:genStressRight}) with $\alpha=0$ has no eigenvalues $\amp$ with
$|\amp|>1$ when 
\begin{equation}
\Delta t < \mass(4-\lambda)/(z \lambda)
\label{eq:trestr}
\end{equation}
for $\lambda \leq 1$. Conversely, if 
$\Delta t > \mass(4-\lambda)/(z \lambda)$, then there are eigenvalues with $|\amp|>1$ for $\lambda \leq 1$.
\label{thm:traditionalFO}
\end{theorem}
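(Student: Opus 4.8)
\emph{Proof plan.} The plan is to collapse the singularity condition $\det=0$ for the $3\times3$ system (\ref{eq:eig}) with $\alpha=0$ into a single quadratic in $\amp$, and then to locate its roots relative to the unit circle. First I would set $\alpha=0$ in (\ref{eq:eig}): the $(1,1)$ and $(2,2)$ entries become $1-1/(2r)$, the $(1,3)$ and $(2,3)$ entries become $\mp1/2$, and the bottom row becomes $\big(\amp\dt z/r,\ -\amp\dt z/r,\ \mass(\amp-1)\big)$. Expanding the determinant along the first row, it factors as
\[
  \Big(1-\tfrac{1}{2r}\Big)\Big[\Big(1-\tfrac{1}{2r}\Big)\mass(\amp-1) + \tfrac{\amp\dt z}{r}\Big] .
\]
Since $|r|>1$ — exactly the inequality established just before the theorem, which holds for $\lambda\le1$ whenever $|\amp|>1$ — the prefactor $1-1/(2r)$ is nonzero, so $\det=0$ is equivalent to $(2r-1)\mass(\amp-1)+2\amp\dt z=0$. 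Substituting $r=(\amp-1+\lambda)/\lambda$, so that $2r-1=(2(\amp-1)+\lambda)/\lambda$, clearing denominators, and dividing by $2\mass$ (we take $\mass>0$, since for $\alpha=0$ the body update in (\ref{eq:eqforv}) is otherwise undefined), this becomes the monic quadratic
\[
  \amp^2 + a_1\,\amp + a_0 = 0,\qquad a_0 = 1-\tfrac{\lambda}{2},\qquad a_1 = \tfrac{\lambda}{2}-2+\tfrac{\lambda\dt z}{\mass}.
\]

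For the first assertion I would invoke the Schur--Cohn (Jury) criterion: both roots of $\amp^2+a_1\amp+a_0$ lie strictly inside the unit disk iff $|a_0|<1$ and $|a_1|<1+a_0$. Since $0<\lambda\le1$ we have $a_0=1-\lambda/2\in(0,1)$, so the first condition always holds; with $1+a_0=2-\lambda/2$, the bound $a_1>-(1+a_0)$ reduces to $\lambda\dt z/\mass>0$ (automatic), and the bound $a_1<1+a_0$ rearranges to exactly $\dt<\mass(4-\lambda)/(z\lambda)$. Hence under that restriction every root of the quadratic has modulus strictly less than $1$; since any eigenvalue with $|\amp|>1$ would necessarily obey $|r|>1$ (for $\lambda\le1$) and therefore be a root of this quadratic, no such eigenvalue exists.

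For the converse, take $\dt>\mass(4-\lambda)/(z\lambda)$. Then $a_1$ — which increases with $\dt$ and equals $1+a_0=2-\lambda/2>0$ at the threshold — satisfies $a_1>1+a_0$, so the polynomial $p(\amp)=\amp^2+a_1\amp+a_0$ has $p(-1)=1-a_1+a_0<0$ while $p(\amp)\to+\infty$ as $\amp\to-\infty$; the intermediate value theorem yields a real root $\amp_\star<-1$. Since $|\amp_\star|>1$ and $\lambda\le1$ force $|r|>1$, this $\amp_\star$ is an admissible normal mode and the scheme is unstable. The main thing to watch is not the algebra but this admissibility bookkeeping — confirming that a root of the reduced quadratic with $|\amp|>1$ genuinely corresponds to a bounded interior solution, i.e.\ to $|r|>1$; that comes for free from the pre-theorem computation. (One can sidestep Schur--Cohn and argue by continuity in $\dt$: because the coefficients are real, a root on the unit circle must be $\pm1$ — a complex root would drag its conjugate onto the circle, forcing $|a_0|=1$, impossible since $|a_0|=1-\lambda/2<1$ — and $\amp=1$ is never a root for $\dt>0$ while $\amp=-1$ is a root precisely at $\dt=\mass(4-\lambda)/(z\lambda)$; so a root leaves the closed unit disk only at that value, and inspecting any small $\dt>0$ shows both roots start inside it.)
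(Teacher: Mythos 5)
Your proof is correct, and it follows the paper's derivation as far as reducing the zero-determinant condition to a quadratic: with $\alpha=0$ the $3\times3$ determinant in (\ref{eq:eig}) factors into $\bigl(1-\tfrac{1}{2r}\bigr)$ (giving the benign root $\amp_1=1-\lambda/2$, whose modulus is below $1$ and which is in any case excluded when $|\amp|>1$ because then $|r|>1$) times a second factor that, after substituting $r=(\amp-1+\lambda)/\lambda$, becomes exactly the monic quadratic $\amp^2+a_1\amp+a_0=0$ with $a_0=1-\lambda/2$ and $a_1=\lambda/2-2+\lambda\dt z/\mass$. Your coefficients agree with the paper's roots $\amp_{2,3}$ (same sum and product), so you are analyzing the identical quadratic.

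Where you diverge is in how you locate the roots relative to the unit circle. The paper writes the roots explicitly via the quadratic formula and splits into two cases: when the discriminant is negative the roots are complex conjugates with $|\amp_{2,3}|^2=1-\lambda/2<1$; when real, both roots are shown to be $<1$ always and $>-1$ exactly under (\ref{eq:trestr}). You instead invoke the Schur--Cohn (Jury) test directly on the coefficients: $|a_0|<1$ always (since $0<\lambda\le 1$ gives $a_0\in(0,1)$), $a_1>-(1+a_0)$ is automatic, and $a_1<1+a_0$ rearranges to (\ref{eq:trestr}). For the converse you observe $p(-1)<0$ beyond the threshold and use the intermediate value theorem to place a real root left of $-1$, then check admissibility via $|r|>1$. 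Both arguments are complete and hinge on the same lemma established before the theorem (namely $|\amp|>1,\ \lambda\le 1\Rightarrow |r|>1$, which is what makes the factorization's prefactor nonvanishing and guarantees the interior modes are bounded). The Schur--Cohn route is a bit more systematic and avoids the complex/real case split; the paper's explicit root formula has the minor advantage of exhibiting the roots and their moduli concretely. Your parenthetical continuity argument is also sound: a real monic quadratic with $|a_0|<1$ cannot have a root on the unit circle other than $\pm1$, $\amp=1$ is never a root (since $p(1)=\lambda\dt z/\mass>0$), and $\amp=-1$ is a root precisely at the threshold, so the roots start inside the disk for small $\dt$ and leave only through $-1$.

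One small remark: you correctly flag that the division by $\mass$ presumes $\mass>0$; the theorem is vacuous (first direction) or ill-posed (second direction, since (\ref{eq:eqforv}) with $\alpha=0$ requires $\mass>0$) at $\mass=0$, so this restriction is implicit and harmless.
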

\begin{proof}
For $\alpha=0$, the eigenvalue problem (\ref{eq:eig}) reduces to
\begin{equation}
  \begin{bmatrix}
    \displaystyle{1-\frac{1}{2r}} & 0 & \displaystyle{-\frac{1}{2}} \medskip \\
    0 & \displaystyle{1-\frac{1}{2r}} & \displaystyle{\frac{1}{2}} \medskip \\
    \displaystyle{\frac{\amp z \dt}{r}} & \displaystyle{-\frac{\amp z \dt}{r}} & \displaystyle{\mass(A-1)}
  \end{bmatrix}
  \begin{bmatrix}
    b_{L,0}  \\ a_{R,0}  \\ \vRBtilde/c
  \end{bmatrix}
   = 0.
\end{equation}
The zero determinant condition is solved to give three roots $\amp_1=1-\lambda/2$ and 
\begin{equation}
\amp_{2,3}=1-\frac{\lambda}{4}-\frac{z \xi \lambda}{2} \pm 
                \sqrt{ \left(1-\frac{\lambda}{4}-\frac{z \xi \lambda}{2}\right)^2 -1 +\frac{\lambda}{2} }
\label{eq:roots23}
\end{equation}
where $\xi={\dt}/{\mass}$. Clearly, $|\amp_1|<1$ for $\lambda \leq 1$. In the case
$$
\left(1-\frac{\lambda}{4}-\frac{z \xi \lambda}{2}\right)^2 -1 +\frac{\lambda}{2} < 0,
$$
$\amp_2$ and $\amp_3$ are complex conjugate, and 
$$
 |\amp_2|^2=|\amp_3|^2 = \left(1-\frac{\lambda}{4}-\frac{z \xi \lambda}{2}\right)^2 - 
\left(1-\frac{\lambda}{4}-\frac{z \xi \lambda}{2}\right)^2 + 1 - \frac{\lambda}{2} = 1 -\frac{\lambda}{2} < 1.
$$
When $\amp_2$ and $\amp_3$ are real, rewriting (\ref{eq:roots23}) as
$$
A_{2,3}=1-(\frac{\lambda}{4}+\frac{z \xi \lambda}{2}) \pm 
                \sqrt{ \left(\frac{\lambda}{4}+\frac{z \xi \lambda}{2}\right)^2 -z\xi\lambda} , 
$$
shows directly that both roots are are always $ <1$, hence $|\amp|<1$ 
if and only if
$$
 -1 < 1-(\frac{\lambda}{4}+\frac{z \xi \lambda}{2}) - 
             \sqrt{ \left(\frac{\lambda}{4}+\frac{z \xi \lambda}{2}\right)^2 -z\xi\lambda} ,
$$
which is equivalent to
\begin{equation}
 \sqrt{ \left(\frac{\lambda}{4}+\frac{z \xi \lambda}{2}\right)^2 -z\xi\lambda} 
            < 2-(\frac{\lambda}{4}+\frac{z \xi \lambda}{2}).
\label{eq:negcond}
\end{equation}
The necessary condition that the right hand side is positive is equivalent to
\begin{equation}
  z\xi\lambda < 4 - \frac{\lambda}{2}.
\label{eq:poscond}
\end{equation}
Assume (\ref{eq:poscond}) holds and square both sides of (\ref{eq:negcond}) to obtain
\begin{equation}
  z\xi\lambda < 4 - \lambda.
\label{eq:finalcond}
\end{equation}
Hence, $|\amp|<1$ exactly when (\ref{eq:finalcond}) holds. The proof is completed by observing that
(\ref{eq:finalcond}) is equivalent to (\ref{eq:trestr}).
\end{proof}

{\em Remark:}
Theorem~\ref{thm:traditionalFO} indicates the traditional coupling scheme with $\alpha=0$ has a time step restriction
that can be more strict than that for the fluid domains alone. 
Even though the rigid body is formally 
integrated with the Backward Euler scheme, which would be unconditionally stable when used in isolation, the coupled 
formulation does not include the full dependence of the forcing $\Force^{n+1}$ on $\vb^{n+1}$. In the case of light 
bodies, i.e., bodies with small $\mass$, the time step restriction for the scheme with $\alpha=0$, can be severe. 
Another way to state the result is that for any fixed grid resolution, there exists some 
sufficiently small mass for which the solution will have exponential growth in time.
In fact, it is easy to see that for $\alpha=0$  and 
fixed $\dt$, the limit of small mass yields 
$r \sim 1 - {\lambda}/{4}-{z\lambda\dt}/{\mass}$ and 
therefore $\lim_{\mass\to0}|\amp|=\infty$. 
Note, however, that the traditional coupling scheme with $\alpha=0$ is stable, for any
finite mass $\mass$, provided the time step satisfies the conditions given in~\ref{thm:traditionalFO}.

{\em Remark:} Theorem~\ref{thm:projectedFO} shows that the time step restriction (\ref{eq:trestr}) can be avoided 
by switching to the interface conditions with $\alpha=z$.

{\em Remark:} 
The structure of the eigenvalue problem in the proof of Theorem~\ref{thm:projectedFO} suggests why the
choice $\alpha=z$ is in some sense optimal. 
When $\alpha=z$, the rigid body mode is decoupled from the fluid modes and stability follows for any $\mass\ge 0$.
On the other hand, for $\alpha=0$,
the eigenvalue problem (\ref{eq:eig}) represents a coupled system and the question of stability is summarized in
Theorem~\ref{thm:traditionalFO}. 
\par
{\em Remark:} For choices of $\alpha$ other than zero or $z$, the stability of the numerical scheme 
varies somewhat. The determinant condition can be used to produce an expression for $\amp$, but it is somewhat
difficult to interpret. We provide no further discussion about other choices of the parameter $\alpha$.

%-------------- second order BCs
\subsection{A second-order accurate numerical discretization of the model problem}
We look now at the formulation and stability of a second-order accurate version of the projection interface scheme. 
For the discretization of the fluid domains we choose the second-order accurate Lax-Wendroff scheme, 
\begin{equation}
    \uLE{k}_i^{n+1} =  
    \uLE{k}_i^{n} 
    +\Delta t C_k D_0 \uLE{k}_i^n
    +\frac{\Delta t^2}{2} C_k^2\Dp\Dm \uLE{k}_i^n\qquad k=L,R ,
  \label{eq:discreteInterior_so}
\end{equation}
where $D_0= (\Dp+\Dm)/2$ is the centered difference operator, and
$C_k$ has been defined previously in~\eqref{eq:eigenDecomp}.
The Lax-Wendroff scheme is a good model, since many 
non-linear schemes of TVD type are designed to approximate the Lax-Wendroff scheme in the parts of
the computational domain where the solution is smooth. 
The projection coupling conditions can be implemented to second-order accuracy
as follows. Define interface stresses on the left and right at any time $t^n$ by
\begin{align}
  \sigma_{I,L}^n &= \frac{3\sigma_{L,-1}^n-\sigma_{L,-2}^n}{2}+\alpha_L\left(\vRB^n-\frac{3v_{L,-1}^n-v_{L,-2}^n}{2} \right), \label{eq:stressLeftSO}\\
  \sigma_{I,R}^n & = \frac{3\sigma_{R,1}^n-\sigma_{R,2}^n}{2}+\alpha_R\left(\frac{3v_{R,1}^n-v_{R,2}^n}{2} -\vRB^n\right) \label{eq:stressRightSO}.
\end{align}
These are obtained by extrapolation from domain interiors, and subsequent projection. The force at any time level $t^n$ is defined as before using (\ref{eq:force1D}), and a second-order accurate trapezoidal integration for the solid is then defined
\begin{equation}
  \mass\frac{\vRB^{n+1}-\vRB^n}{\dt} = \frac{1}{2}\left(\force^{n+1}+\force^{n}\right).
  \label{eq:trap}
\end{equation}
The velocity from the solid is applied 
as a boundary condition on the fluids to second-order accuracy by setting the average $(v_{L,0}+v_{L,-1})/2$ 
equal to $\vRB$ (and similarly at the right interface), or equivalently 
\begin{align}
  v_{L,0}^{n+1} & = 2\vRB^{n+1}-v_{L,-1}^{n+1} , \label{eq:vLeftSO} \\
  v_{R,0}^{n+1} & = 2\vRB^{n+1}-v_{R,1}^{n+1} \label{eq:vRightSO}.
\end{align}
Extrapolation of the stress to the ghost cells gives
\begin{align}
  \sigma_{L,0}^{n+1} & = 2\sigma_{I,L}^{n+1}-\sigma_{L,-1}^{n+1} , \label{eq:sigmaLeftSO} \\
  \sigma_{R,0}^{n+1} & = 2\sigma_{I,R}^{n+1}-\sigma_{R,1}^{n+1} \label{eq:sigmaRightSO}.
\end{align}

In summary, to advance one time level from $t^{n}$ to $t^{n+1}$ using the second-order accurate scheme, the following steps can be followed
\begin{algorithm}\label{alg:secondOrder}
~~\\
\begin{enumerate}
\item Compute $\uLE{L}_i^{n+1}$ for $i=\ldots,-2,-1$ and $\uLE{R}_i^{n+1}$ for $i=1,2,\ldots$ using (\ref{eq:discreteInterior_so}).
\item Define $F^{n+1}$ using the computed solution at $t^{n+1}$ and solve (\ref{eq:trap}) to yield
\begin{align}
  \vRB^{n+1} = & 
    \left[\mass+\frac{\dt \alpha_L}{2} \,+\right. \left.\frac{\dt \alpha_R}{2}\right]^{-1}\left[\left(\mass-\frac{\dt \alpha_L}{2}-\frac{\dt \alpha_R}{2}\right)\vRB^{n}+\right.\nonumber \\ 
  & \frac{\dt}{2}\left(\frac{3\sigma_{R,1}^{n+1}-\sigma_{R,2}^{n+1}}{2}+\frac{3\sigma_{R,1}^n-\sigma_{R,2}^n}{2}\right)-
  \frac{\dt}{2}\left(\frac{3\sigma_{L,-1}^{n+1}-\sigma_{L,-2}^{n+1}}{2}+\frac{3\sigma_{L,-1}^n-\sigma_{L,-2}^n}{2}\right)+ \nonumber \\
  & \left. \frac{\alpha_R\dt}{2}\left(\frac{3v_{R,1}^{n+1}-v_{R,2}^{n+1}}{2}+\frac{3v_{R,1}^n-v_{R,2}^n}{2}\right)+
  \frac{\alpha_L\dt}{2}\left(\frac{3v_{L,-1}^{n+1}-v_{L,-2}^{n+1}}{2}+\frac{3v_{L,-1}^n-v_{L,-2}^n}{2}\right)\right].
  \label{eq:RBTrap}
\end{align}
\item Define the ghost point values at the new time $t^{n+1}$ for the fluid domains using equations (\ref{eq:vLeftSO}) -- (\ref{eq:sigmaRightSO}).
\end{enumerate}
\end{algorithm}

\subsection{Normal mode analysis of the second-order accurate scheme}
For the analysis, we make the same assumptions as in Sec.~\ref{sec:normalMode}, that the grid spacings and wave
speeds are the same on both sides of the body and that $\alpha_L=\alpha_R=\alpha$.
First, we decompose (\ref{eq:discreteInterior_so}) into characteristic components, 
and obtain the two scalar equations on each side of the body,
\begin{equation}
a_{k,i}^{n+1} = a_{k,i}^n - c \Delta t D_0 a_{k,i}^n + \frac{c^2\Delta t^2}{2}\Dp\Dm a_{k,i}^n \qquad 
b_{k,i}^{n+1} = b_{k,i}^n + c \Delta t D_0 b_{k,i}^n + \frac{c^2\Delta t^2}{2}\Dp\Dm b_{k,i}^n
\label{eq:lwchareq}
\end{equation}
where $k=L,R$,  $i=\ldots,-2,-1$ for $k=L$, and $i=1,2,\ldots$ for $k=R$.
The normal modes are found by inserting $a_i^n =\amp^n r^i$ and $b_i^n = \amp^n r^i$ into (\ref{eq:lwchareq}). 
This leads to the characteristic equation
\begin{equation}
   \frac{1}{2}(\nu + \nu^2)r^2 + ( 1-\amp - \nu^2)r + \frac{1}{2}(\nu^2-\nu) = 0,
\label{eq:lwstability}
\end{equation}
where $\nu = c\Delta t/\Delta x$ for the $b$ characteristic component 
and $\nu=-c\Delta t/\Delta x$ for the $a$ characteristic component. 
The assumption $c = c_L = c_R$ gives the same characteristic equation on either side of the body.
There are four roots, two for the $-c$ characteristic, that we denote $r_1^-$ and $r_2^-$, and two roots 
for the $c$ characteristic,
that we denote $r_1^+$ and $r_2^+$. It is well-known, see e.g., \cite{gustafsson72}, that for the 
equation $u_t = cu_x$ under the CFL-condition $\lambda<1$, the two roots of (\ref{eq:lwstability}) satisfy 
\begin{alignat}{2}
  |r_1^+| \leq 1-\delta &\qquad |\amp| \geq 1 , \nonumber \\
  |r_2^+| > 1 & \qquad |\amp| \geq 1, \quad \amp\neq 1 , \label{eq:root1} \\
   r_2^+ = 1, & \qquad \amp=1  \nonumber,
\end{alignat}
for some $\delta >0$ when $c>0$, and
\begin{alignat}{2}
  |r_1^-| < 1 &\qquad |\amp| \geq 1, \quad  \amp\neq 1 ,  \nonumber \\
  r_1^- = 1, & \qquad \amp=1, \label{eq:root2}  \\ 
  |r_2^-| \geq 1+\delta & \qquad |\amp| \geq 1, \nonumber
\end{alignat}
when $c<0$.
For the model problem (\ref{eq:discreteInterior_so}), there are thus four roots.
From (\ref{eq:root1}), (\ref{eq:root2}) and the condition of boundedness
at infinity, it follows that the $r_1^+$ and $r_1^-$ components are zero for $i<0$ and that
the $r_2^+$ and $r_2^-$ components are zero for $i>0$.
Hence, the normal mode solutions to the left and to the right of the body can be written
\begin{equation}
  \begin{bmatrix}
    \tilde{v} \\ \tilde{\sigma}
  \end{bmatrix}_{L,i}  = 
  c 
  \begin{bmatrix} 
    -1 \\ z 
  \end{bmatrix}
  (r_2^-)^{i}a_{L,0} + 
   c 
  \begin{bmatrix}
    1 \\ z 
  \end{bmatrix}
    (r_2^+)^{i}b_{L,0}
\qquad \hbox{ for } i\le 0 ,
  \label{eq:ulsollw}
\end{equation}
and
\begin{equation}
  \begin{bmatrix}
    \tilde{v} \\ \tilde{\sigma}
  \end{bmatrix}_{R,i}  = 
  c
  \begin{bmatrix} 
    -1 \\ z
  \end{bmatrix}
  (r_1^-)^{i}a_{R,0} +  
  c
  \begin{bmatrix}
    1 \\ z
  \end{bmatrix}
 (r_1^+)^{i}b_{R,0} \qquad 
\qquad \hbox{ for } i\ge 0,
  \label{eq:ursollw}
\end{equation}
respectively. 
\par
The solutions (\ref{eq:ulsollw}) and (\ref{eq:ursollw}) inserted into the interface 
conditions (\ref{eq:vLeftSO}), (\ref{eq:vRightSO}), (\ref{eq:sigmaLeftSO}), and 
(\ref{eq:sigmaRightSO}) together with (\ref{eq:RBTrap}) give five equations for 
the five unknowns $a_{R,0}, b_{R,0}, a_{L,0}, b_{L,0}$, and $\vRB$. Fully written out these equations are
\begin{alignat}{2}
\left(1+\frac{1}{r_2^-}\right)a_{L,0} - \left(1+\frac{1}{r_2^+}\right)b_{L,0} + \frac{2}{c} \vRB &= 0 ,\label{eq:nmso1}\\
    \left(1+ r_1^-\right)a_{R,0}      -       \left(1+r_1^+\right)b_{R,0}     + \frac{2}{c} \vRB &= 0 ,\label{eq:nmso2}\\ 
\left(1+\frac{1}{r_2^-} -2\left(\frac{\alpha}{z} + 1\right)\left(\frac{3}{2}\frac{1}{r_2^-} - \frac{1}{2}\frac{1}{(r_2^-)^2}\right) \right)a_{L,0} +
\left(1+\frac{1}{r_2^+} +2\left(\frac{\alpha}{z} - 1\right)\left(\frac{3}{2}\frac{1}{r_2^+}-\frac{1}{2}\frac{1}{(r_2^+)^2}\right)\right)b_{L,0} -
\frac{2\alpha}{z} \frac{\vRB}{c} &= 0 ,\label{eq:nmso3}\\
\left(1+r_1^- +2\left(\frac{\alpha}{z} - 1\right)\left(\frac{3}{2}r_1^- - \frac{1}{2}(r_1^-)^2\right) \right)a_{R,0} +
\left(1+r_1^+ -2\left(\frac{\alpha}{z} + 1\right)\left(\frac{3}{2}r_1^+ - \frac{1}{2}(r_1^+)^2\right) \right)b_{R,0} +
\frac{2\alpha}{z} \frac{\vRB}{c} &= 0, \label{eq:nmso4}\\
\left(\frac{\alpha}{z} + 1\right)\left(\frac{3}{2}\frac{1}{r_2^-} - \frac{1}{2}\frac{1}{(r_2^-)^2}\right)a_{L,0}
+ \left(1-\frac{\alpha}{z}\right)\left(\frac{3}{2}\frac{1}{r_2^+}-\frac{1}{2}\frac{1}{(r_2^+)^2}\right)b_{L,0} \hspace{2in} & \nonumber \\
+ \left(1-\frac{\alpha}{z}\right)\left(\frac{3}{2}r_1^- - \frac{1}{2}(r_1^-)^2\right)a_{R,0} 
-\left(1+\frac{\alpha}{z}\right)\left(\frac{3}{2}r_1^+ - \frac{1}{2}(r_1^+)^2\right)b_{R,0} + %& \nonumber \\
  \left( \frac{\amp-1}{\amp+1}\frac{2\mass }{\Delta t z} + \frac{2\alpha}{z}\right)\frac{\vRB}{c} & = 0 .\label{eq:nmso5}
\end{alignat}
For the case $\alpha=z$ the system (\ref{eq:nmso1})-(\ref{eq:nmso5}) becomes
\begin{alignat}{2}
\left(1+\frac{1}{r_2^-}\right)a_{L,0} - \left(1+\frac{1}{r_2^+}\right)b_{L,0} + \frac{2}{c} \vRB &= 0, \label{eq:nmso1b}\\
\left(1+ r_1^-\right)a_{R,0}      -       \left(1+r_1^+\right)b_{R,0}     + \frac{2}{c} \vRB &= 0, \label{eq:nmso2b}\\ 
\left(1-\frac{5}{r_2^-} + \frac{2}{(r_2^-)^2} \right)a_{L,0} +
\left(1+\frac{1}{r_2^+}\right)b_{L,0} - \frac{2}{c} \vRB &= 0, \label{eq:nmso3b}\\
\left(1+r_1^- \right)a_{R,0} +
\left(1-5r_1^+ + 2(r_1^+)^2 \right)b_{R,0} + \frac{2}{c} \vRB &= 0, \label{eq:nmso4b}\\
\left(\frac{3}{r_2^-} - \frac{1}{(r_2^-)^2}\right)a_{L,0} - \left(3 r_1^+ - (r_1^+)^2\right)b_{R,0} +
  \left( \frac{\amp-1}{\amp+1}\frac{2\mass }{\Delta t z} + 2 \right)\frac{\vRB}{c} & = 0 .\label{eq:nmso5b}
\end{alignat}

% -------------------------------------- THEOREM -----------------------------------
\begin{theorem}
When $|\amp|\geq 1$ and $\lambda < 1$, the system (\ref{eq:nmso1b})--(\ref{eq:nmso5b}) only has the trivial 
solution $a_{L,0}=b_{L,0}=a_{R,0}=b_{R,0}=\vRB=0$. Hence, the numerical scheme using the interior 
discretizations (\ref{eq:discreteInterior_so}), interface conditions (\ref{eq:vLeftSO})-(\ref{eq:sigmaRightSO}), 
rigid body integrator (\ref{eq:RBTrap}) and projections (\ref{eq:stressLeftSO})-(\ref{eq:stressRightSO}) has
no exponentially growing modes for $\lambda < 1$ and $\mass \geq 0$.
 \label{thm:projectedSO}
\end{theorem}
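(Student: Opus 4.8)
The plan is to show that the $5\times5$ homogeneous system (\ref{eq:nmso1b})--(\ref{eq:nmso5b}) is nonsingular whenever $|\amp|\ge1$ and $\lambda<1$, so that no nontrivial normal mode can exist. Rather than expanding the $5\times5$ determinant head-on, I would hunt for special row combinations that decouple the system --- the same mechanism that makes the first-order case work, where (as noted after Theorem~\ref{thm:traditionalFO}) the choice $\alpha=z$ separates the rigid-body mode from the fluid modes. The two external ingredients I expect to use are the Lax--Wendroff root-location estimates (\ref{eq:root1})--(\ref{eq:root2}) cited from~\cite{gustafsson72} --- for $|\amp|\ge1$ one has $|r_2^-|\ge1+\delta$, $|r_2^+|>1$ (with $r_2^+=1$ only at $\amp=1$), $|r_1^+|\le1-\delta$, and $|r_1^-|<1$ (with $r_1^-=1$ only at $\amp=1$) --- together with the elementary mapping property of the trapezoidal amplification factor $\amp\mapsto(\amp-1)/(\amp+1)$.

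First I would add (\ref{eq:nmso1b}) to (\ref{eq:nmso3b}): the $b_{L,0}$ and $\vRB$ contributions cancel identically and the $a_{L,0}$ coefficient collapses to the perfect square $2\,(1-1/r_2^-)^2$, so that $(1-1/r_2^-)^2 a_{L,0}=0$; since $|r_2^-|\ge1+\delta$ forces $r_2^-\neq1$, this yields $a_{L,0}=0$. Symmetrically, subtracting (\ref{eq:nmso2b}) from (\ref{eq:nmso4b}) cancels $a_{R,0}$ and $\vRB$ and leaves $2\,(1-r_1^+)^2 b_{R,0}=0$; since $|r_1^+|\le1-\delta$ forces $r_1^+\neq1$, this gives $b_{R,0}=0$. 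With these two unknowns removed, (\ref{eq:nmso3b}) is $-1$ times (\ref{eq:nmso1b}) and (\ref{eq:nmso4b}) coincides with (\ref{eq:nmso2b}), so the system reduces to the three relations $(1+1/r_2^+)b_{L,0}=(2/c)\vRB$, $(1+r_1^-)a_{R,0}=-(2/c)\vRB$, and --- from (\ref{eq:nmso5b}) --- $\bigl(\tfrac{\amp-1}{\amp+1}\tfrac{2\mass}{\dt z}+2\bigr)(\vRB/c)=0$.

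Next I would dispose of the rigid-body relation. The bilinear map $\amp\mapsto w=(\amp-1)/(\amp+1)$ carries $\{|\amp|\ge1\}$ into the closed right half-plane $\{\operatorname{Re}w\ge0\}$, so with $\mass\ge0$ and $\dt z>0$ the term $w\,\tfrac{2\mass}{\dt z}$ has nonnegative real part and hence $\tfrac{\amp-1}{\amp+1}\tfrac{2\mass}{\dt z}+2$ has real part $\ge2>0$ and is nonzero; therefore $\vRB=0$. The borderline value $\amp=-1$ should be handled by first clearing the denominator --- multiplying (\ref{eq:nmso5b}) by $\amp+1$ --- which for $\mass>0$ again forces $\vRB=0$; when $\mass=0$ and $\amp=-1$ the reduced system is genuinely underdetermined, but this is a non-growing $|\amp|=1$ mode (consistent with the trapezoidal rule lacking L-stability) and does not contradict the ``no exponential growth'' conclusion. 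Finally, substituting $\vRB=0$ back and using that $r_2^+\neq-1$ and $r_1^-\neq-1$ (again from (\ref{eq:root1})--(\ref{eq:root2}): each of these roots is either $1$ or lies strictly off the unit circle on the appropriate side), the coefficients $1+1/r_2^+$ and $1+r_1^-$ are nonzero, forcing $b_{L,0}=a_{R,0}=0$, so all five unknowns vanish.

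The hard part will be the bookkeeping around the two ad hoc row combinations: verifying that they really do collapse to perfect squares and, in particular, confirming that it is exactly the choice $\alpha=z$ (already built into (\ref{eq:nmso1b})--(\ref{eq:nmso5b})) that makes the $b_{L,0}$, $a_{R,0}$, and $\vRB$ contributions cancel in those combinations, so that the decoupling is structural rather than accidental. Everything else --- the quoted root estimates and the one-line half-plane property of the trapezoidal map --- is cheap, so once the algebraic collapse is secured the argument is short. If the clean combinations were not apparent, my fallback would be to impose the boundedness conditions, form the resulting $3\times3$ determinant in $b_{L,0}$, $a_{R,0}$, $\vRB/c$, and argue its nonvanishing from the same root bounds and the trapezoidal half-plane estimate, which I expect to reproduce the same structure with more labor.
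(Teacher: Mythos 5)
Your argument follows the paper's proof step for step: the same two row combinations (adding \eqref{eq:nmso1b} to \eqref{eq:nmso3b}, subtracting \eqref{eq:nmso2b} from \eqref{eq:nmso4b}) collapse to the perfect squares $2(1-1/r_2^-)^2a_{L,0}=0$ and $2(1-r_1^+)^2b_{R,0}=0$, and the root bounds \eqref{eq:root1}--\eqref{eq:root2} then give $a_{L,0}=b_{R,0}=0$, after which $\vRB$ and finally $b_{L,0},a_{R,0}$ vanish exactly as in the paper. The only deviations are minor and both sound: you dispose of the rigid-body relation via the M\"obius half-plane estimate for $\amp\mapsto(\amp-1)/(\amp+1)$ rather than the paper's explicit solution $\amp=(\mass-\dt z)/(\mass+\dt z)$ with $|\amp|<1$, and you correctly flag the borderline $\amp=-1$, $\mass=0$ case (a neutral $|\amp|=1$ mode) that the paper's statement and proof gloss over.
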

\begin{proof}
Adding equations (\ref{eq:nmso1b}) and (\ref{eq:nmso3b}) gives 
$$
  2\left(1-\frac{1}{r_2^-}\right)^2a_{L,0} = 0.
$$
Because of (\ref{eq:root2}), 
$$
 \left|1-\frac{1}{r_2^-}\right| \geq 1 - \frac{1}{|r_2^-|} \geq 1-\frac{1}{1+\delta} = \frac{\delta}{1+\delta}>0 ,
$$
and consequently, $a_{L,0}=0$. Similarly, subtracting (\ref{eq:nmso2b}) from (\ref{eq:nmso4b}) and
using (\ref{eq:root1}) gives $b_{R,0}=0$. Equation (\ref{eq:nmso5b}) with $a_{L,0}=b_{R,0}=0$ gives
\begin{equation}
    \left( \frac{\amp-1}{\amp+1}\frac{2\mass }{\Delta t z} + 2 \right)\frac{\vRB}{c} = 0 .
\label{eq:nmvrbeq}
\end{equation}
A non-trivial solution exists if
$$
 \frac{\amp-1}{\amp+1}\frac{2\mass }{\Delta t z} + 2 = 0,
$$
which is equivalent to $\amp = (\mass- \dt z)/(\mass+\dt z)$.
Assuming that for $\dt > 0$, $z>0$, and $\mass \ge 0$, it follows that $|\amp|<1$, and hence that the only solution 
of (\ref{eq:nmvrbeq}) when $|\amp|\geq 1$ is $\vRB = 0$. Finally, the remaining equations 
$$
 \left(1+\frac{1}{r_2^+}\right)b_{L,0}=0 \qquad \hbox{and}\qquad  (1+r_1^-)a_{R,0}=0 ,
$$
have the unique solutions $b_{L,0}=a_{R,0}=0$, because (\ref{eq:root1}) and (\ref{eq:root2}) exclude
the possibility that $r_2^+=-1$ or $r_1^-=-1$ when $|\amp|\geq 1$.
\end{proof}

%-------------- Results
% ------------------------------------------------------------------------
\newcommand{\graphWidth}{7.cm}
% ------------------------------------------------------------------------

\section{Numerical demonstration of the theory for the FSI model problem}\label{sec:1Dresults}

We now present numerical results from solving the one-dimensional FSI problem introduced
in Section~\ref{sec:1Dprojection}. The aim is to demonstrate the
accuracy and stability of the new FSI projection algorithm
For this purpose we use the exact solution derived
in~\ref{sec:testProblem}. The problem consists of an initial Gaussian pulse in the fluid that moves
left to right and interacts with the rigid body.
The initial conditions for the velocity
and stress are given by
% =======
%in~\ref{sec:testProblem}. The problem consists of an initial disturbance in the fluid that moves
%left to right and interacts with the rigid body. In the left domain the initial conditions for the velocity
%and stress are 
%>>>>>>> .r125
\[
  v(x,t=0) = \frac{c_L}{2}\exp\left(-\beta^2(x-x_0)^2\right), \quad  \sigma(x,t=0) =  -\frac{\kappaL}{2}\exp\left(-\beta^2(x-x_0)^2\right).
\]
The rigid body is initially at rest. 
The exact solution is defined by (\ref{eq:exactSol}), (\ref{eq:leftSol}) and
(\ref{eq:rightSol}). Throughout this section we use $\rhoL = 1$, $c_L=\sqrt{2}$, 
$\rhoR = 1$, $c_R=\sqrt{3}$, $\beta=10$ and $x_0=-1/2$. 
Note that the initial conditions (\ref{eq:test_u_ic}) and
(\ref{eq:test_v_ic}), and exact solutions (\ref{eq:exactSol}),
(\ref{eq:leftSol}) and (\ref{eq:rightSol}) may require differentiation with
respect to space and/or time in order to be used or compared with the dependent
variables of velocity and stress which we use.

%%%%%
%%%%%
\subsection{Easy case: rigid body with mass one}

% ----------------------------------------------------
\begin{figure}
\begin{center}
  \includegraphics[width=\graphWidth]{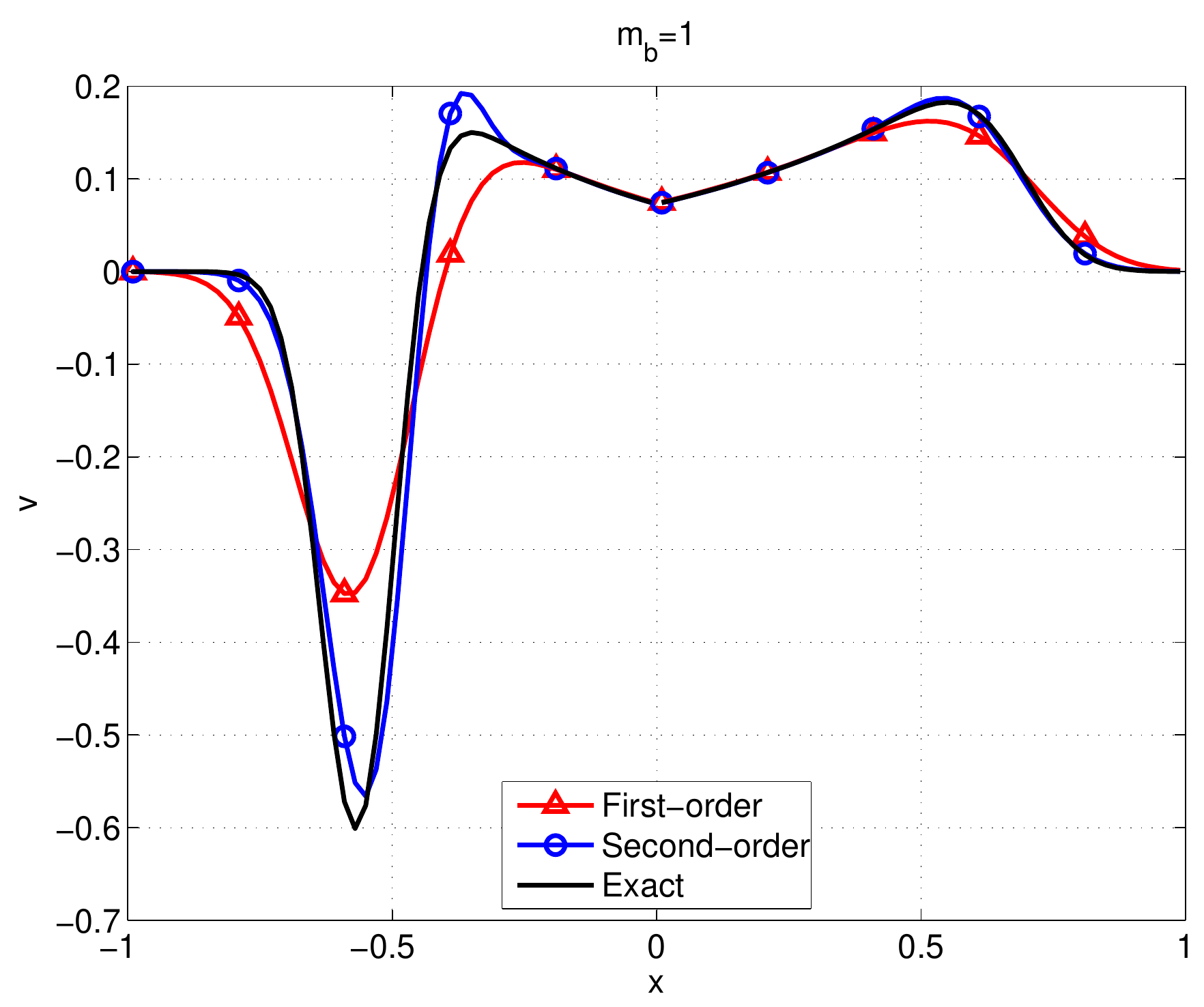}
  \includegraphics[width=\graphWidth]{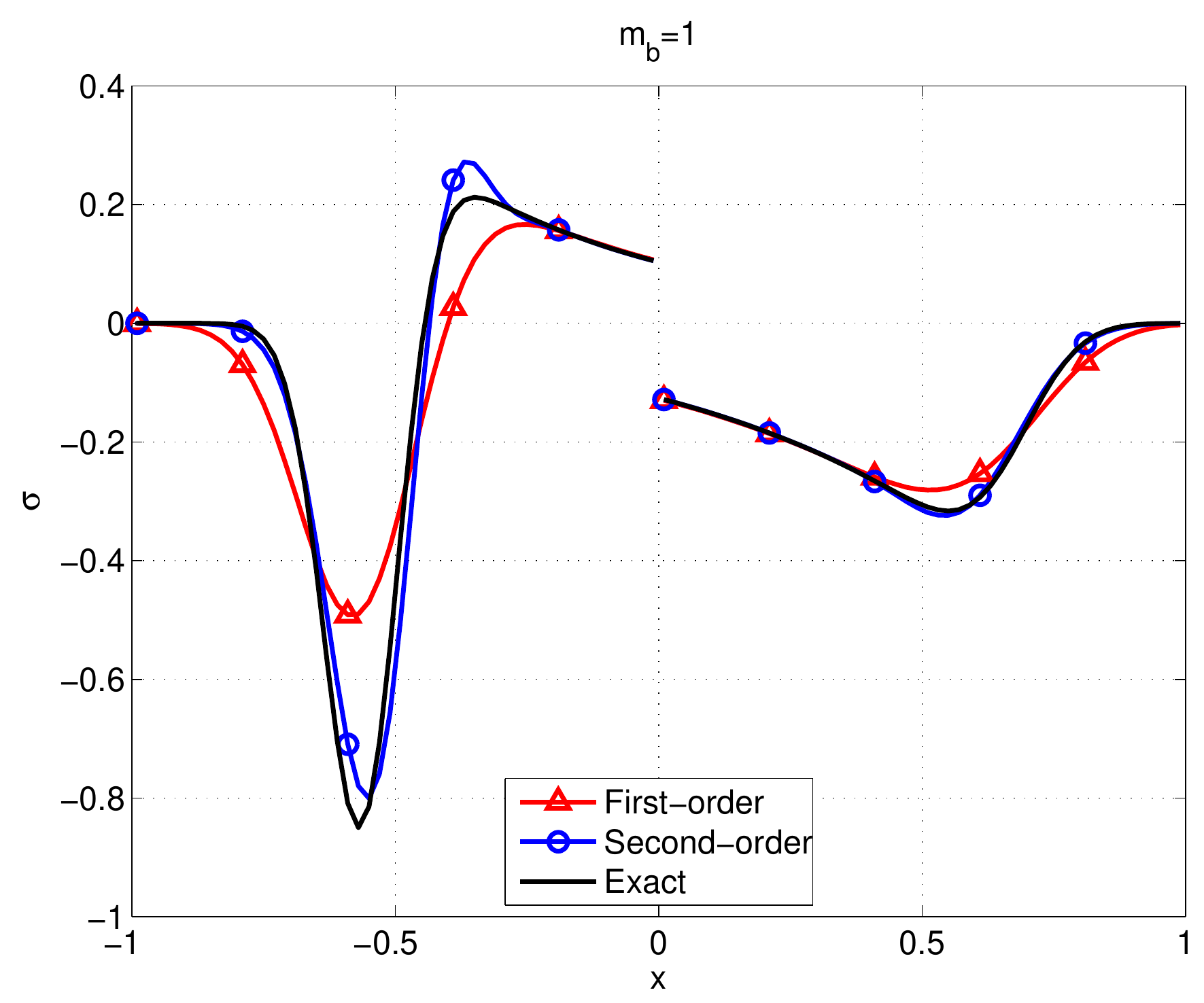}
  \includegraphics[width=\graphWidth]{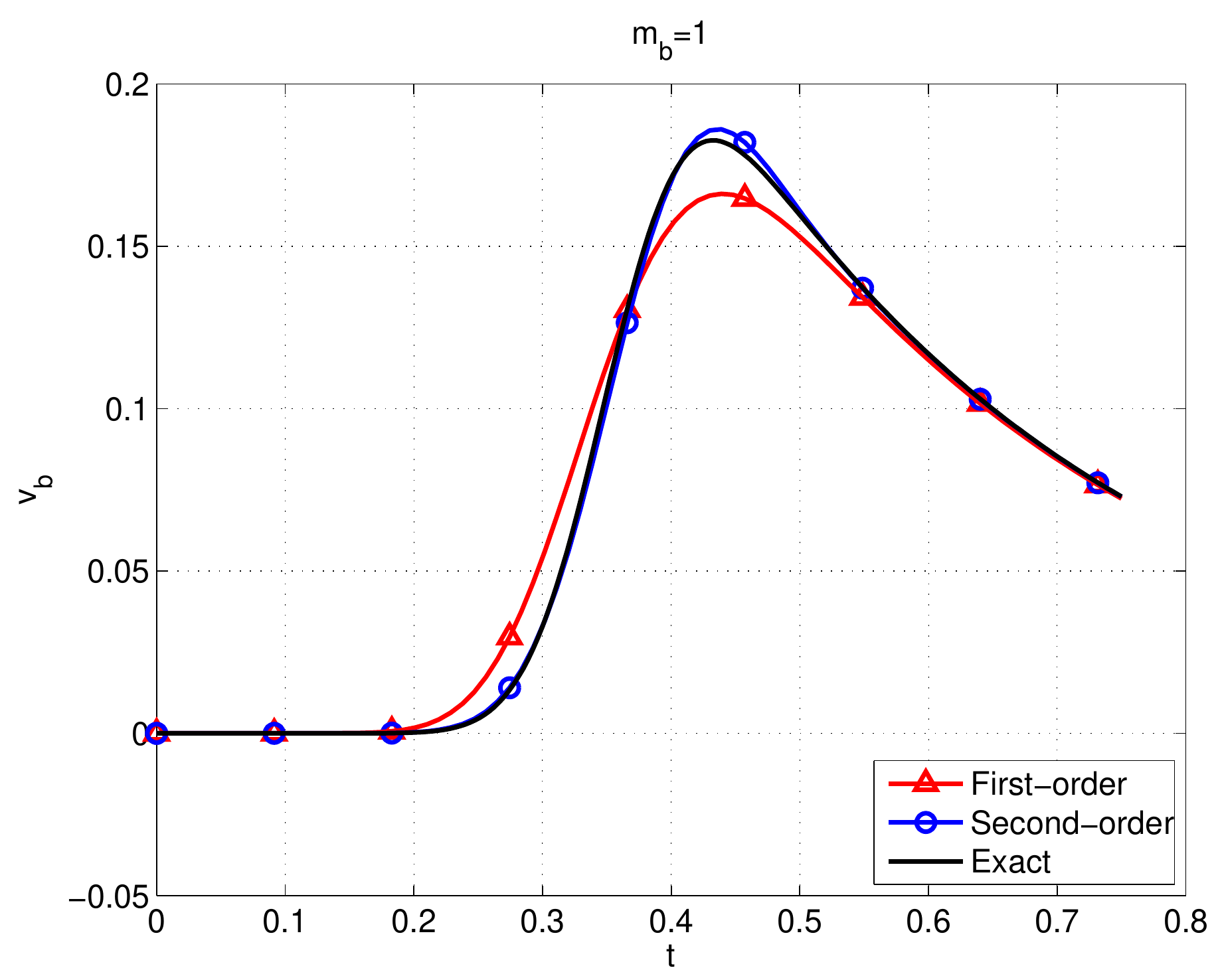}
  \includegraphics[width=\graphWidth]{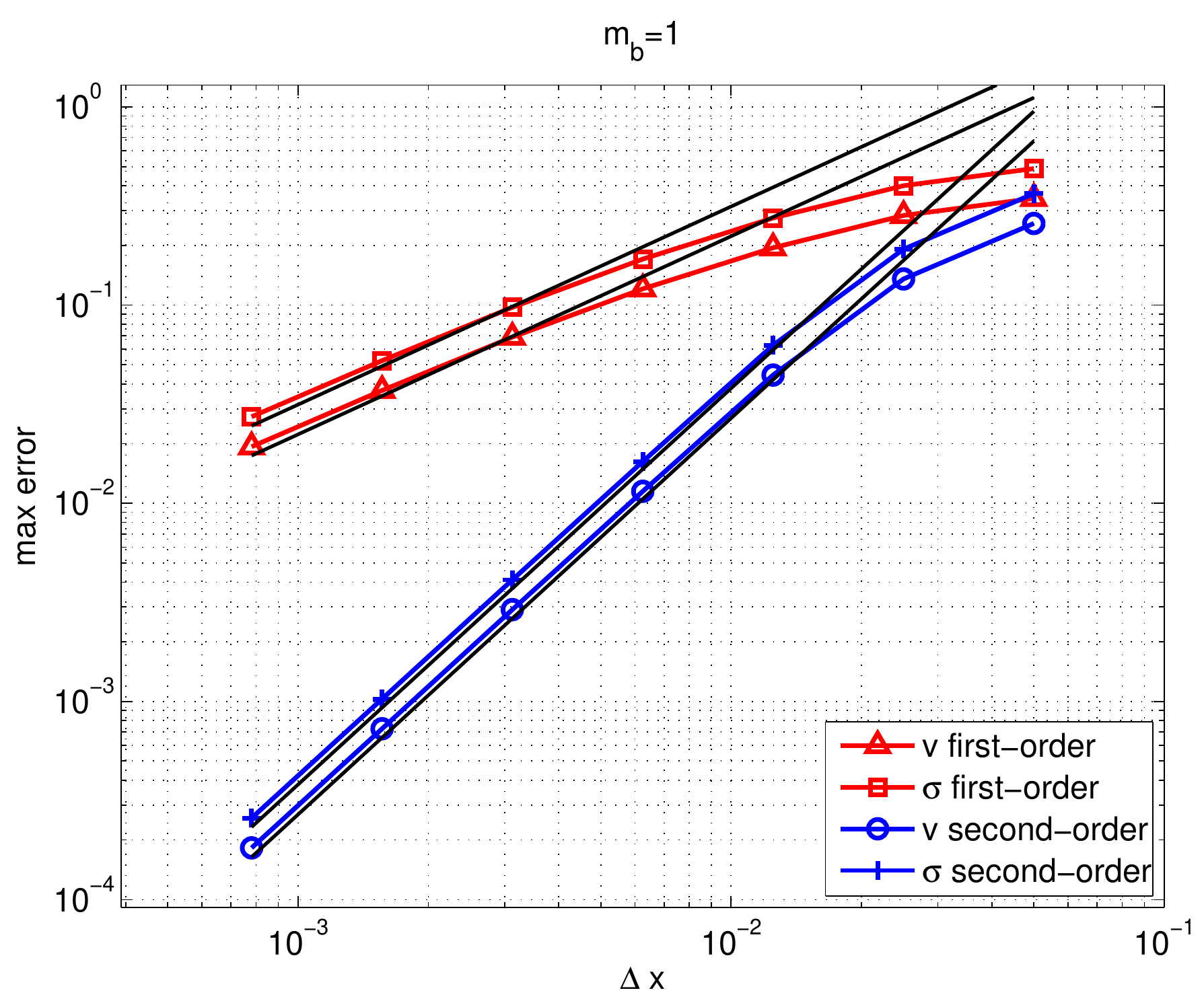}
  \caption{Results for the one-dimensional FSI problem with $\mass=1$ for the first- and second-order accurate schemes. 
           Top left: velocity at $t=0.75$. Top-right: stress at $t=0.75$. Bottom left: velocity of the rigid body, $v_b$ versus time. 
           Bottom right: convergence of the max-norms errors (reference lines of the corresponding order are displayed in black).
           The solutions are plotted in the reference domain $[-1,0]$ for the
           left domain and $[0,1]$ for the right domain with the rigid body of width $\width=0$ located at $x=0$.}
  \label{fig:sol_mass1}
\end{center}
\end{figure}
% ----------------------------------------------------

We begin our numerical results with a case where the CFL time-step constraint in the
fluids is dominant over the explicit ODE time-step constraint for the rigid body. This is the case
when
\[
  \max(z_L,z_R)<\mass\min\left(\frac{\cL}{\dxL},\frac{\cR}{\dxR}\right),
\]
which implies that time steps which satisfy the usual CFL stability constraint
in the fluid also satisfy the stability constraint associated with the ODE for
rigid body motion. As a result, the traditional interface coupling technique found in the
literature has no difficulty, and we are simply setting out to demonstrate
that the new interface projection technique remains accurate for this case.

Figure~\ref{fig:sol_mass1} shows simulation results for $\mass=1$ when using
the first-order accurate upwind scheme for the two fluid domains, the backward Euler
integrator for the rigid body evolution equation, and the interface projection scheme with $\alpha=z$
as defined by Algorithm~\ref{alg:firstOrder}.
In addition we show results  
using the second-order accurate Lax-Wendroff scheme for the fluid domains
together with the trapezoidal rule for integration of the rigid body
as defined by Algorithm~\ref{alg:secondOrder} with $\alpha=z$.

For both cases we use $\dxL=\dxR=1/50$. The exact solution and
numerical approximations for $v$ and $\sigma$ are displayed as functions of the reference coordinate $x$ 
at $t=0.75$, and the velocity of the rigid body is shown as a function of
time. The width of the body is taken as $\width=0$ (this has no influence on the results) 
so that the left and right reference domains meet at $x=0$.
The results from the first-order accurate scheme show predictably smeared out
solution profiles. The results from the second-order accurate scheme are in very good agreement
with the exact solution even at this coarse
resolution. Figure~\ref{fig:sol_mass1} also presents results from a grid convergence
study and shows the max-norm errors for this problem using the two algorithms. The
predicted convergence rates are convincingly demonstrated for both velocity and
stress.

{\em Remark:} For this case, one could also use the new projection scheme with a forward Euler rigid body integrator. Simulation results for this case reveal no unexpected behavior.

{\em Remark:} For this case, traditional coupling techniques without projection would not experience exponential blowup for the considered grids and time steps. Numerical results using the traditional scheme with $\alpha=0$ for this case are nearly identical to those in Figure~\ref{fig:sol_mass1} and are therefore not shown.

%%%%%
%%%%%
\subsection{Difficult case: very light rigid body with mass $10^{-6}$}
\label{sec:intermediate}
We now consider a case where the time-step restriction for the traditional interface algorithm 
is orders of magnitude smaller than the time-step restriction for the new interface projection algorithm.
The time-step restriction for the new projection algorithm 
depends only on the usual CFL time-step restrictions for each fluid domain separately; 
the coupling with the rigid body imposes no new constraint on the time-step 
since the backward Euler and trapezoidal methods are both A-stable.
\begin{figure}
\begin{center}
  \includegraphics[width=\graphWidth]{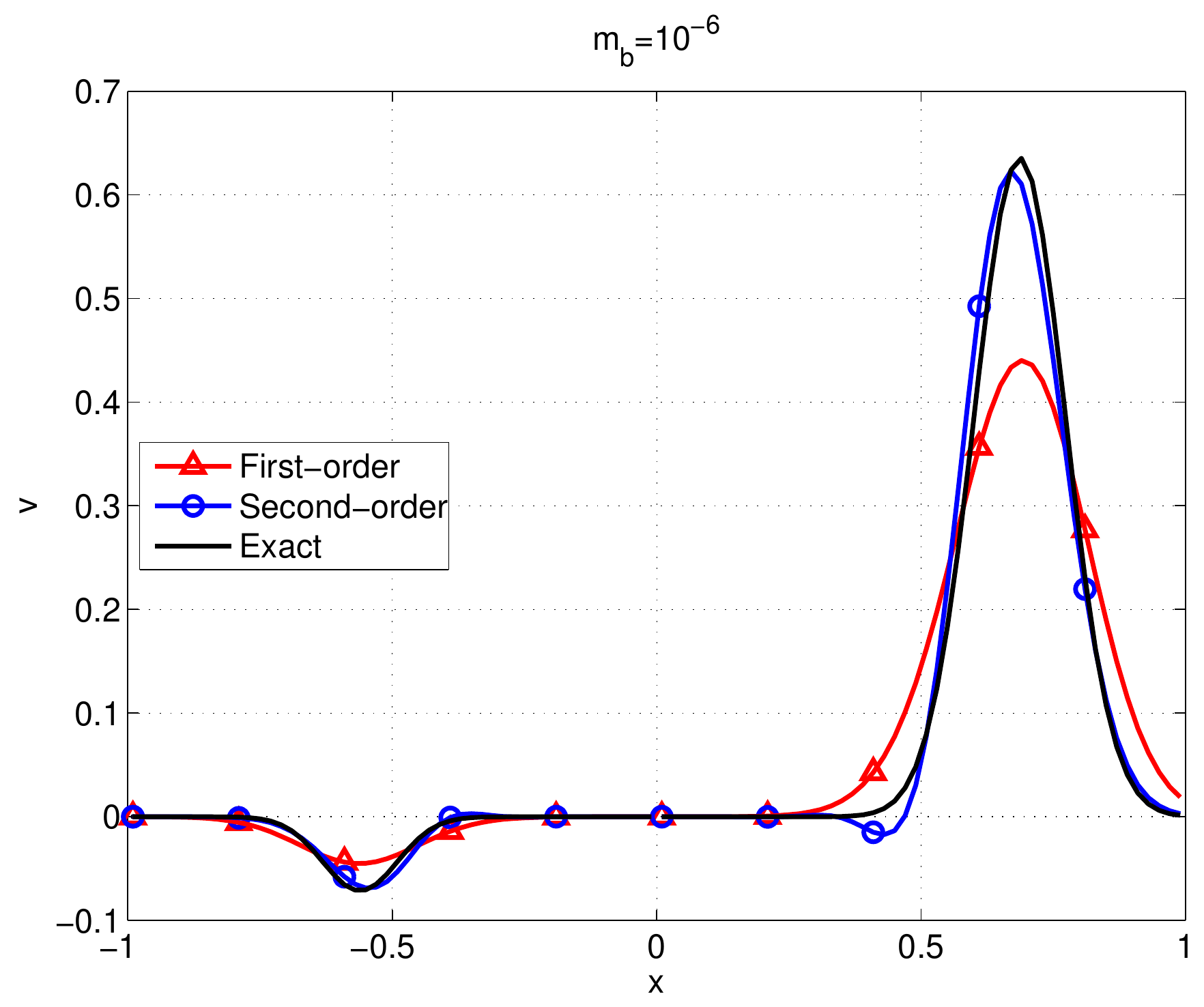}
  \includegraphics[width=\graphWidth]{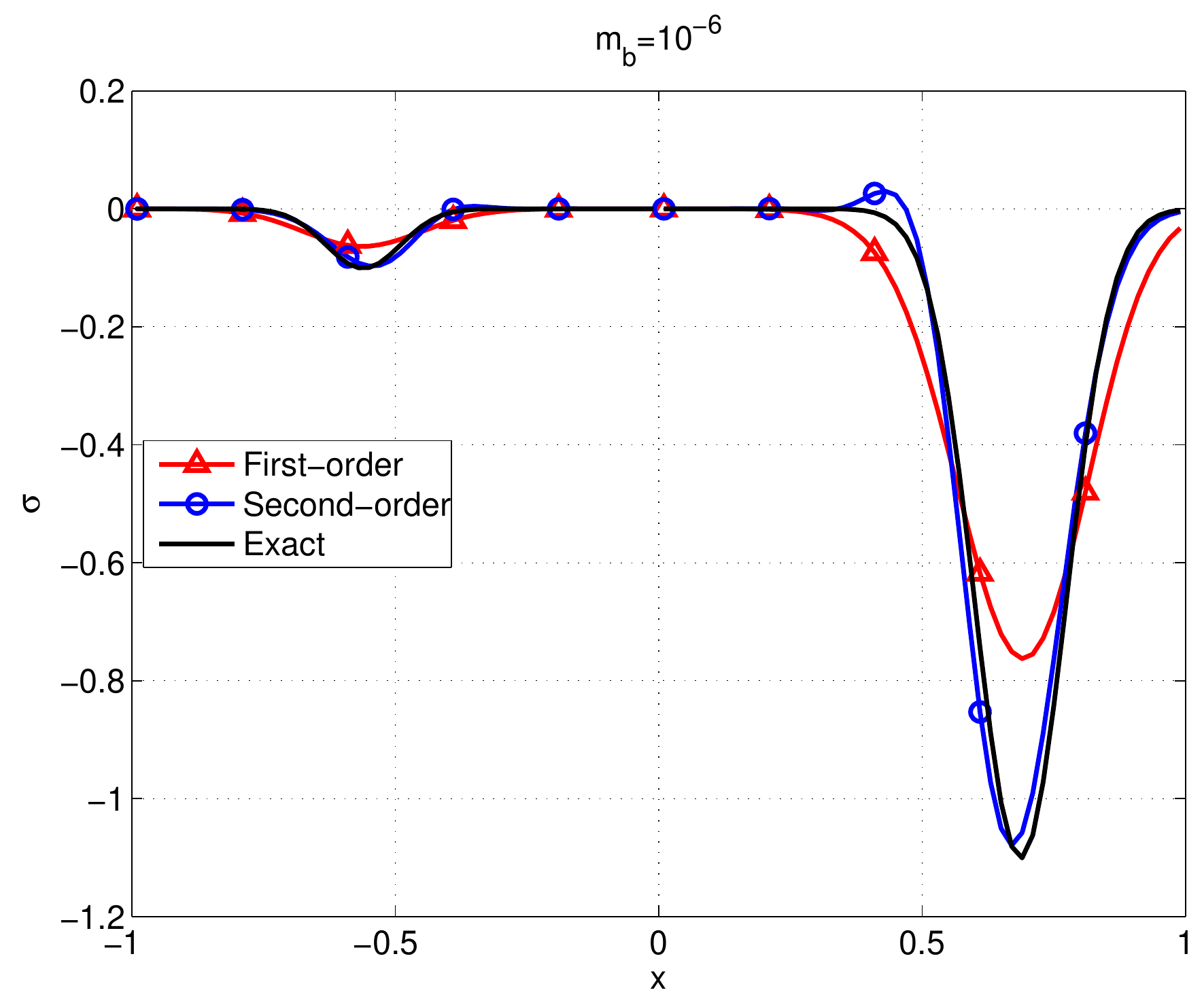}
  \includegraphics[width=\graphWidth]{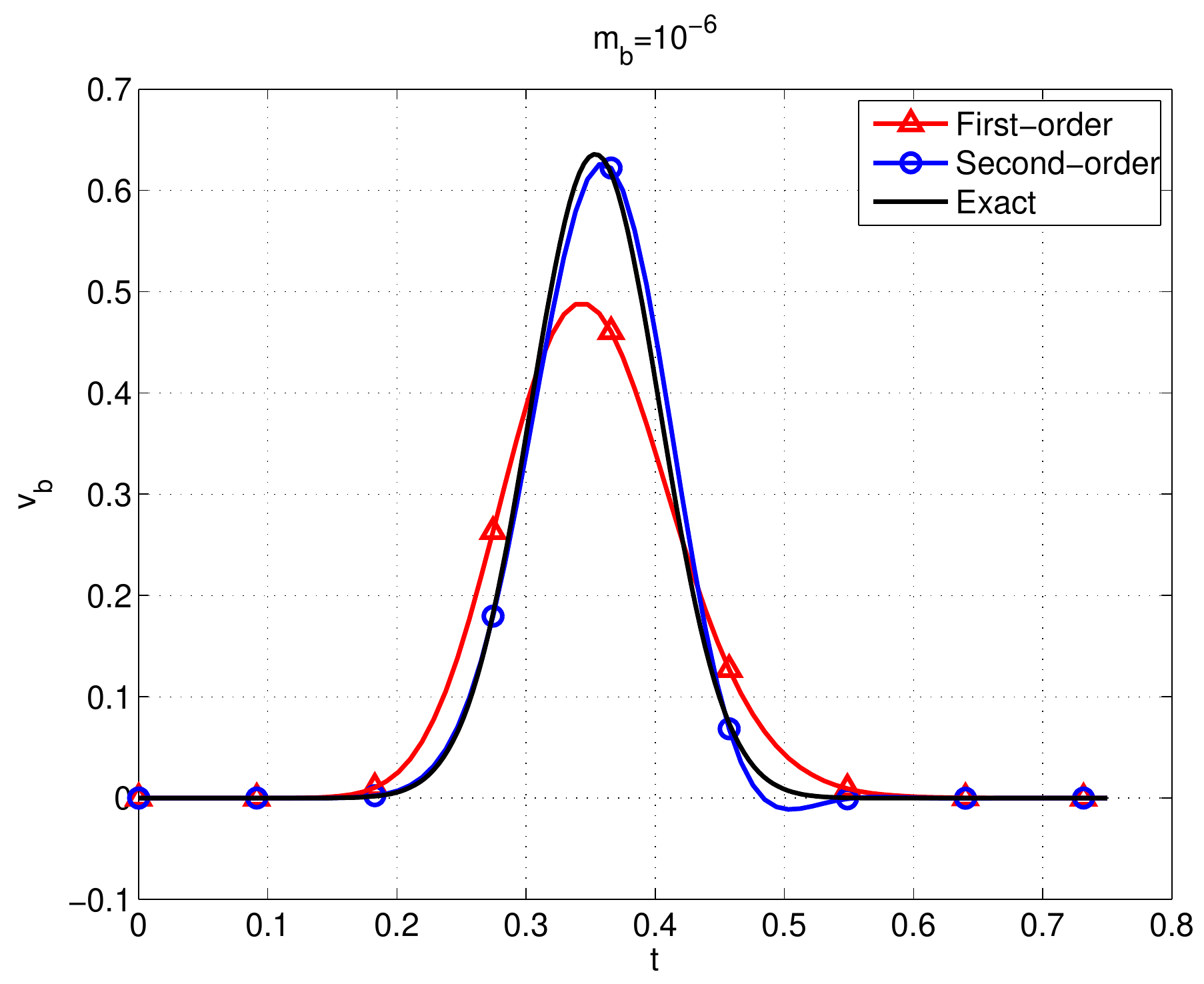}
  \includegraphics[width=\graphWidth]{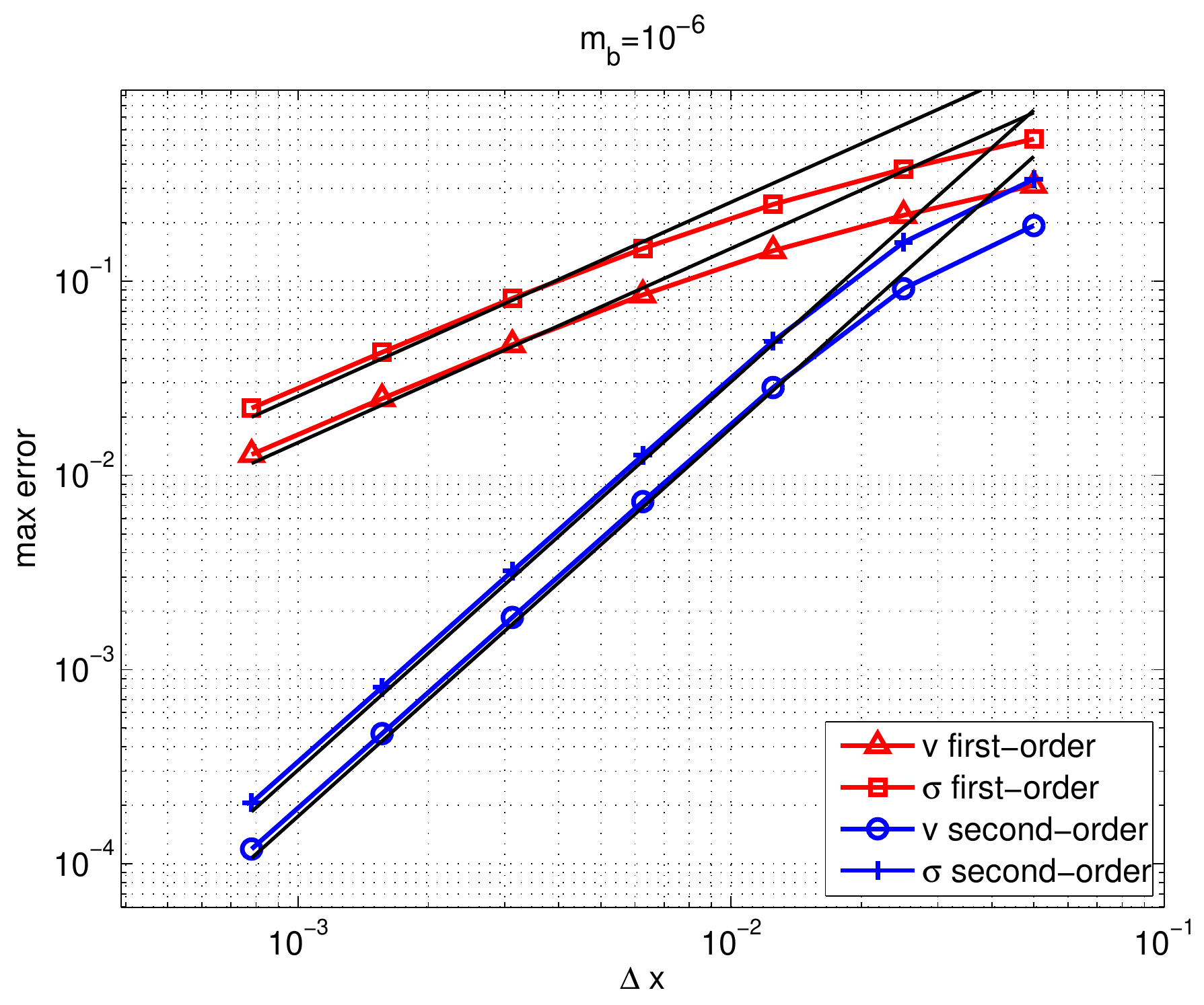}
  \caption{Results for the one-dimensional FSI problem with $\mass=10^{-6}$ for the first- and second-order accurate schemes. 
           Top left: velocity at $t=0.75$. Top-right: stress at $t=0.75$. Bottom left: velocity of the rigid body, $v_b$ versus time. 
           Bottom right: convergence of the max-norms errors. The solutions are plotted in the reference domain $[-1,0]$ for the
           left domain and $[0,1]$ for the right domain with the rigid body of width $\width=0$ located at $x=0$.}
  \label{fig:sol_mass1em6}
\end{center}
\end{figure}
We consider a rigid body with mass $\mass=10^{-6}$ and use the same grid spacings as
before, $\dxL=\dxR=1/50$. Figure~\ref{fig:sol_mass1em6} shows simulation results for this case
using the two new schemes. As the
figure shows, the results from the second-order accurate scheme are, as expected, superior
to those from the first-order accurate scheme.
The lower right graph in Figure~\ref{fig:sol_mass1em6} presents a convergence study. The expected rates
of convergence are again convincingly demonstrated. 

{\em Remark:} For this case, one could instead consider using an explicit rigid
body integrator together with the projection scheme. The rigid
body integration must respect the ODE time-step constraint and so subcycling can
be used. It is straightforward to estimate that for $\dxL=\dxR=1/50$, $14389$ subcycles are
required to obtain stability of a forward Euler integrator. 
The number of subcycles required for stability decreases, however, as $\dt$ decreases.
As a result, for $\dxL=\dxR=1/1280$ (the finest resolution in the
associated convergence studies), only $568$ subcycles are required. A sub-cycling
has been implemented and the results are nearly identical to the results
shown in Figure~\ref{fig:sol_mass1em6}.

{\em Remark:} Had the traditional algorithm with $\alpha=0$ been used, the
entire solution (both fluid domains and the solid domain) would have to be
integrated using a time-step which satisfies a constraint of the form
of~\eqref{eq:trestr}. For the
first-order scheme with Backward Euler rigid body integrator the constraint is
(\ref{eq:trestr}).  For other fluid discretizations and/or rigid body
integrators,the timestep restriction can be determined following the approach used in the proof of Theorem~\ref{thm:traditionalFO}.
Such a time-step
restriction can be quite severe and arises as a result of using a partitioned
algorithm without the interface projection.

%%%%%
%%%%%
\subsection{Rigid body with zero mass}

\begin{figure}
\begin{center}
  \includegraphics[width=\graphWidth]{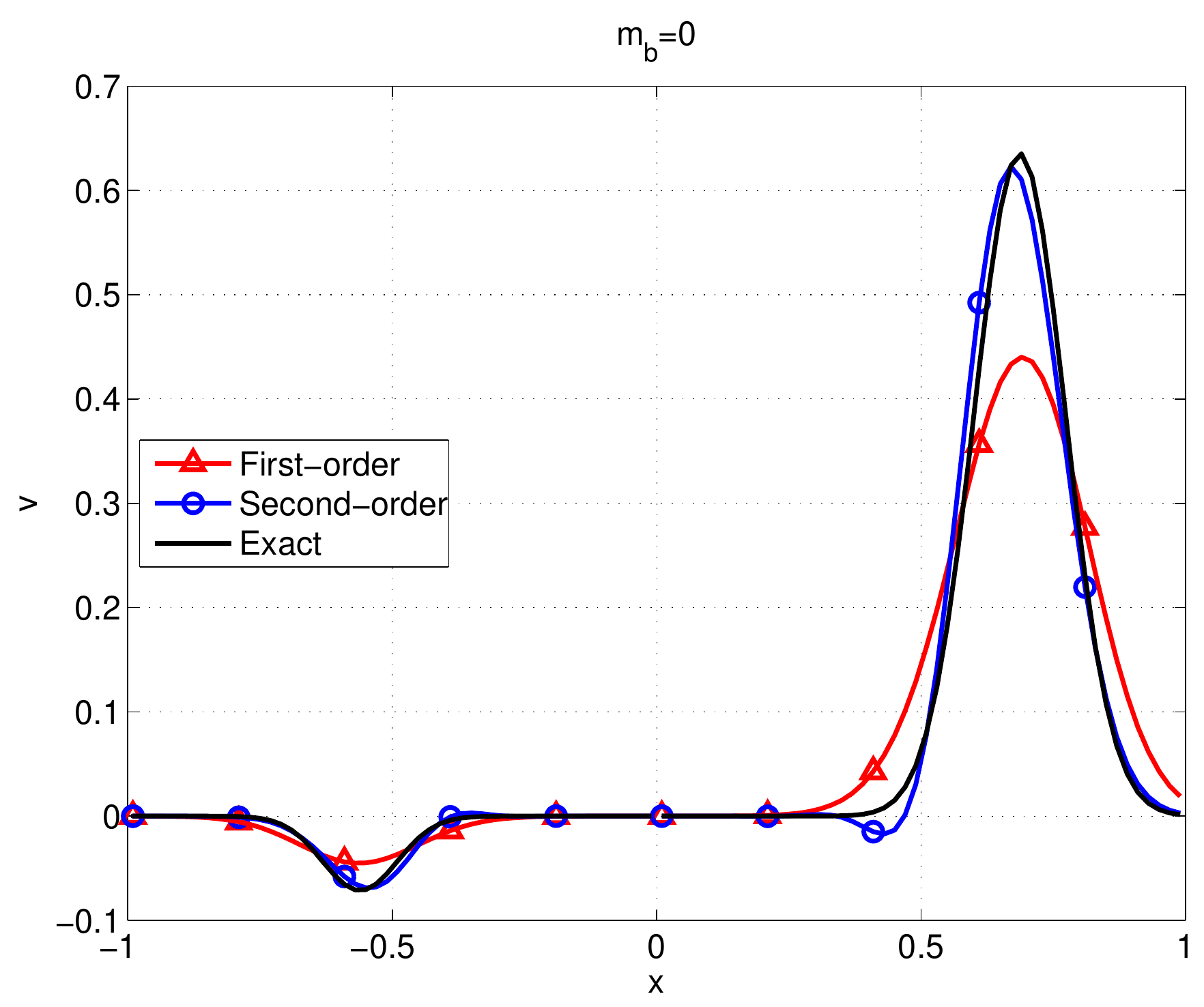}
  \includegraphics[width=\graphWidth]{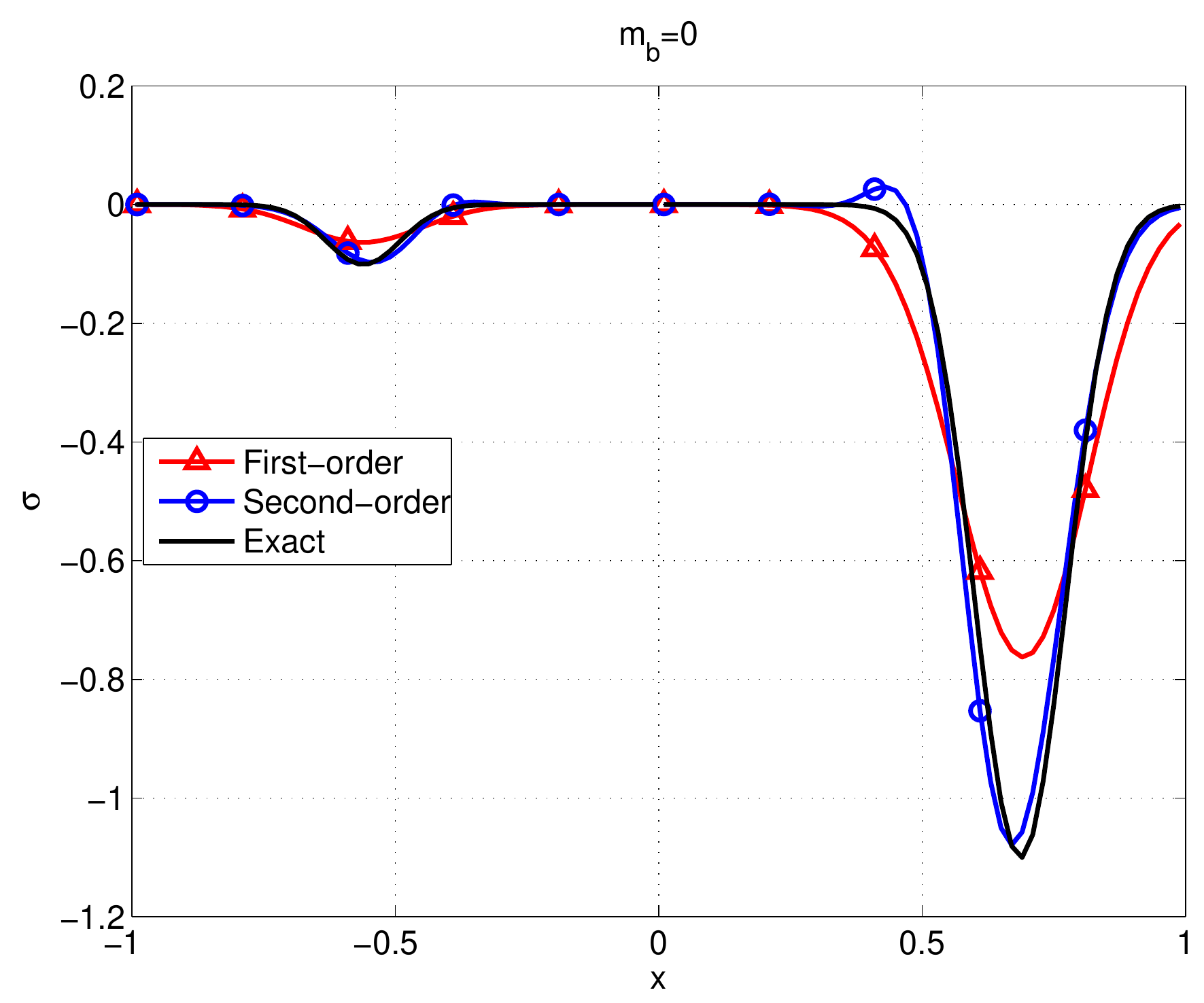}
  \includegraphics[width=\graphWidth]{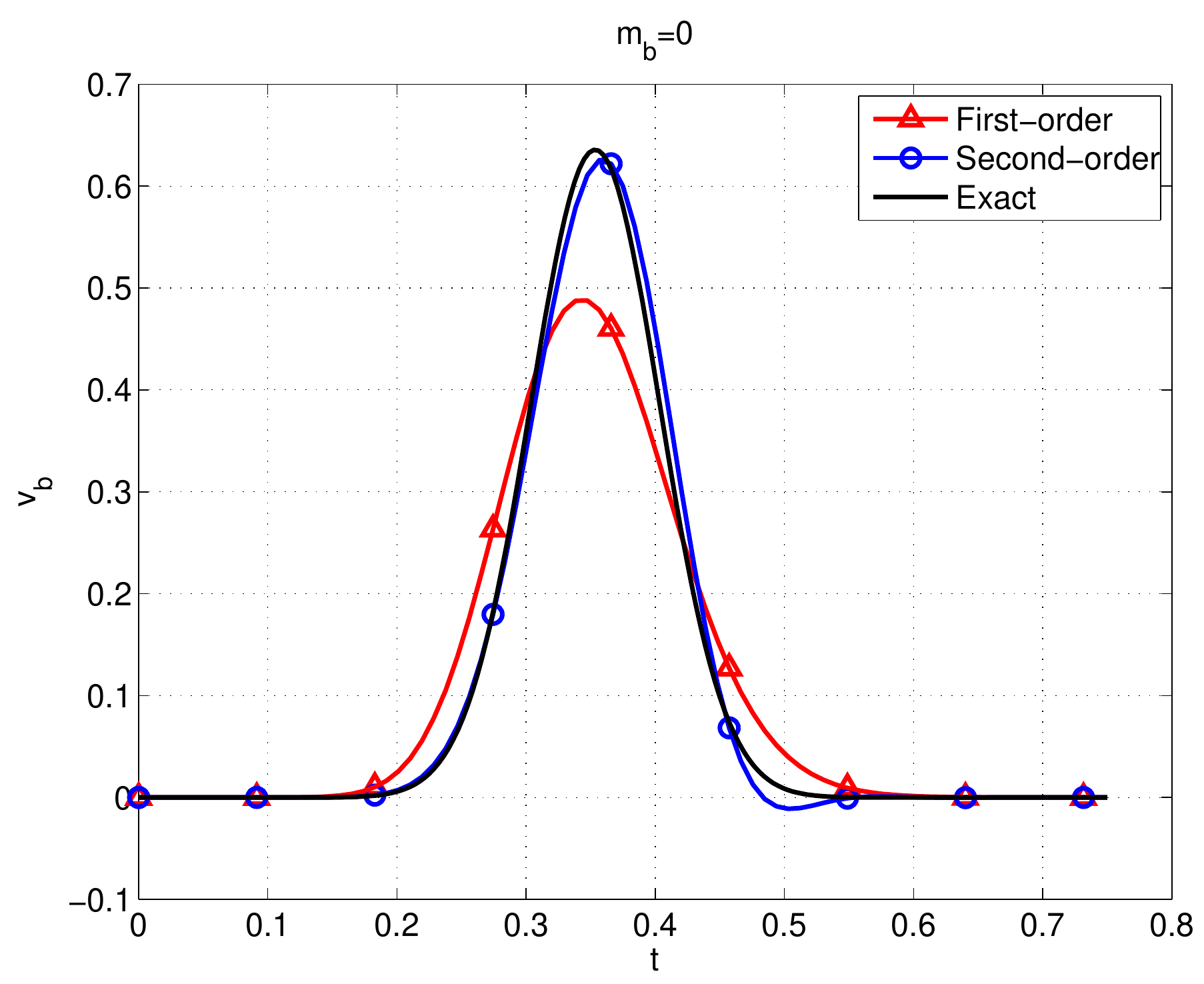}
  \includegraphics[width=\graphWidth]{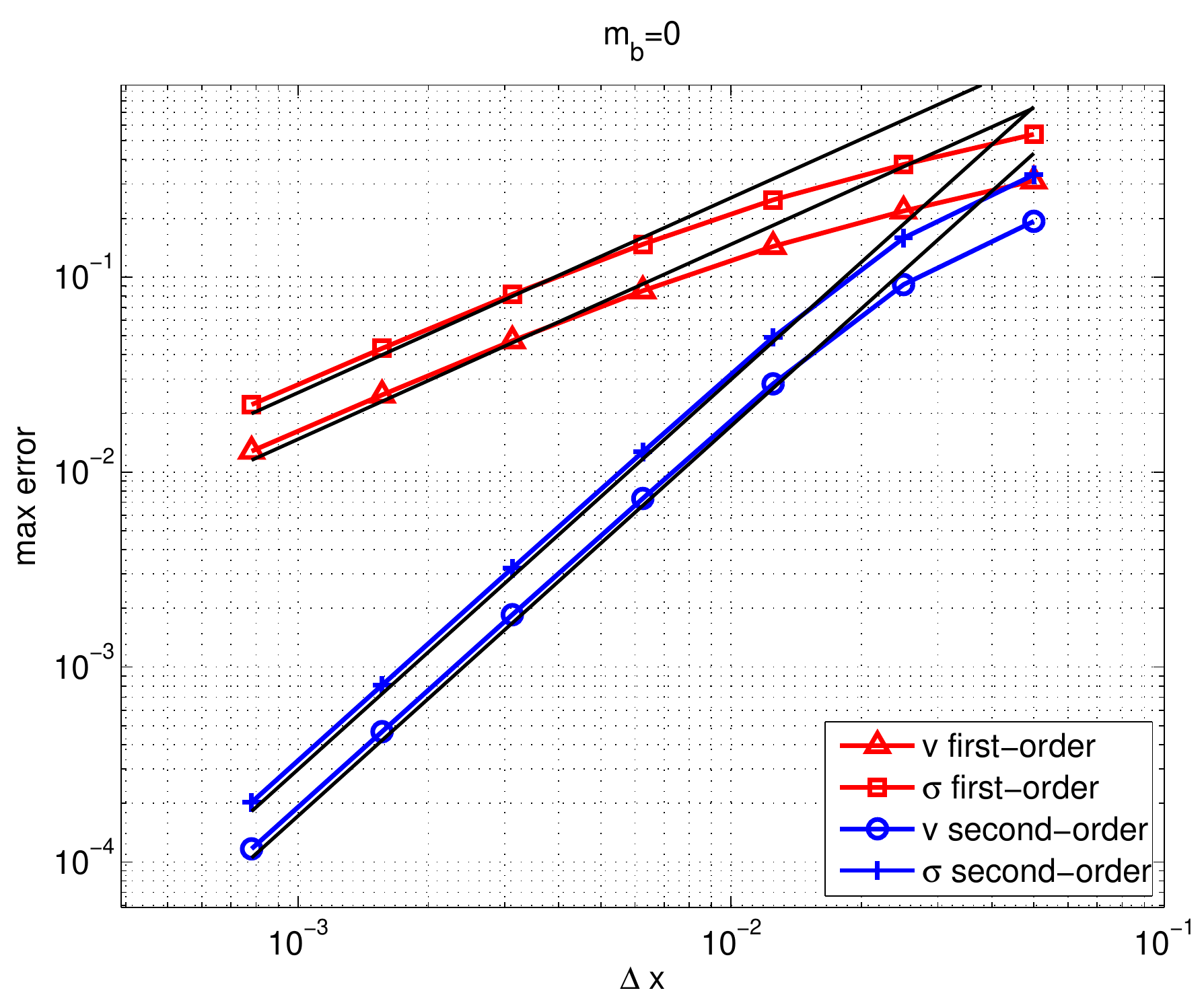}
  \caption{Results for the one-dimensional FSI problem with $\mass=0$ for the first- and second-order accurate schemes. 
           Top left: velocity at $t=0.75$. Top-right: stress at $t=0.75$. Bottom left: velocity of the rigid body, $v_b$ versus time. 
           Bottom right: convergence of the max-norms errors. The solutions are plotted in the reference domain $[-1,0]$ for the
           left domain and $[0,1]$ for the right domain with the rigid body of width $\width=0$ located at $x=0$.}
  \label{fig:sol_mass0}
\end{center}
\end{figure}

The new projection based FSI scheme remains well defined even when the 
mass of the rigid body is zero. This is apparent from the update equation
for the velocity of the rigid body, equation~\eqref{eq:eqforv} for
the first-order accurate scheme or equation~\eqref{eq:RBTrap} for the second-order
accurate scheme. The traditional partitioned algorithm is not well-defined for this case, since it would
require division by $m_b$, and so is not an option.
Figure~\ref{fig:sol_mass0} shows results for the first-order upwind
method with backward Euler rigid body integration, and the second-order upwind
method with trapezoidal rigid body integration. The exact solution is computed
for $\mass=0$ which yields essentially the same solution used for the two domain
model problem in~\cite{banks11a}, i.e. the solution behaves as if the
rigid body were not present. Figure~\ref{fig:sol_mass0} shows convergence
results where again the predicted rates of convergence are demonstrated. No
significant differences from the mass $m_b=10^{-6}$ case in
Section~\ref{sec:intermediate} are observed.

{\em Remark:} For this case it is impossible to satisfy the ODE stability constraint without using an A-stable integrator and so explicit rigid body integration with subcycling is not an option. Put another way, the explicit algorithm would require
an infinite number of subcycles.

%-------------- Multiple space dimensions
\newcommand{\vr}{v_r}
\newcommand{\vvr}{\vv_r}
\newcommand{\pr}{p_r}
\section{The multi-dimensional interface approximation and added-mass matrices} \label{sec:multiDimensionalInterface}

In this section we extend the added-mass algorithm to multiple space dimensions.
Formula~\eqref{eq:stressOnBodyFromCharsLocal} relates the pressure and velocity of a point on the body to the nearby pressure
and velocity in the fluid. This relation is in a form amenable to multidimensional generalization.
Let $\rv=\rv(t)$ denote a point on the surface of the body $B$, and $\nv=\nv(\rv,t)$ the outward normal to the body, then 
in multiple space dimensions~\eqref{eq:stressOnBodyFromCharsLocal} becomes
\begin{align*}
    -p (\rv(t),t)\, \nv &= - p(\rv+,t-)\,\nv + z(\rv+,t-)\, \big[ \nv^T\big( \vv(\rv+,t-)-\vv(\rv,t)\big)\big]\,\nv,
\end{align*}
where $\vv(\rv,t)=\dot\rv$ is the velocity of the point.
To clarify the notation let $\pr=p (\rv,t)$ and $\vvr=\vv(\rv,t)$ denote the pressure and velocity on the body at point $\rv=\rv(t)$,
 and $z_f=z(\rv+,t-)$, $p_f=p(\rv+,t-)$ and $\vv_f=\vv(\rv+,t-)$ denote the impedance, pressure and velocity at the adjacent point in 
the fluid. This gives
\begin{align*}
    -\pr \nv &= - p_f \nv + z_f\, \big[\nv^T\big( \vv_f - \vvr \big)\big]\,\nv.
\end{align*}
Using equation~\ref{eq:rDot} for $\vvr=\dot{\rv}$ it follows that
\begin{align}
    -\pr \nv &= - p_f \nv + z_f\,\big[ \nv^T\big( \vv_f -\vvcm + Y\omegav \big)\big]\,\nv. 
            \label{eq:multidimensionalProjection} 
\end{align}
The {\bf key point} of~\eqref{eq:multidimensionalProjection} is that
is shows how the force exerted by the fluid on the body, $\fv_s=-\pr\nv$,
depends on the velocity of the center of mass, $\vvcm$, and the angular velocity, $\omegav$, of the body. 
Substituting~\eqref{eq:multidimensionalProjection} into the expressions~\eqref{eq:bodyForce}-\eqref{eq:bodyTorque} 
for $\Forcev$ and $\Torquev$ gives
\begin{align*}
    \Forcev & = \int_{\partial B} z_f \nv\nv^T( -\vvcm + Y \omegav )~ ds 
                + \int_{\partial B} -p_f\nv + z_f (\nv^T\vv_f)\nv ~ds + \fv_b, \\
    \Torquev & = \int_{\partial B} z_f  Y\nv\nv^T( -\vvcm + Y \omegav )~ds 
                + \int_{\partial B} \yv\times \big(-p_f\nv+ z_f (\nv^T\vv_f)\nv\big)\,ds + \gv_b.
\end{align*}
We write $\Forcev$ and $\Torquev$ in the form
\begin{align*}
    \Forcev & = -\Avv \vvcm - \Avw \omegav + \Forcevtilde ,\\ 
    \Torquev & =  -\Awv \vvcm - \Aww \omegav + \Torquevtilde
\end{align*}
where the {\em added-mass} matrices $A^{ij}$ are given by (using $Y^T=-Y$, where $Y$ is defined by~\eqref{eq:Ymatrix}),
\begin{alignat}{3}
    \Avv &= \int_{\partial B} z_f \nv \nv^T~ ds,    \qquad &  \Avw &= \int_{\partial B} z_f \nv (Y\nv)^T ~ds,  \label{eq:AddedMassMatrixI} \\
    \Awv &=\int_{\partial B} z_f (Y\nv)\nv^T ~ds \qquad &  \Aww &= \int_{\partial B} z_f Y\nv (Y\nv)^T ~ds , \label{eq:AddedMassMatrixII}
\end{alignat}
and $\Forcevtilde$ and $\Torquevtilde$ are given by 
\begin{align}
     \Forcevtilde & = \int_{\partial B} -p_f\nv + z_f (\nv^T\vv_f)\nv \,ds + \fv_b,           \label{eq:partialForce} \\
    \Torquevtilde & =  \int_{\partial B} \yv\times \big(-p_f\nv+ z_f (\nv^T\vv_f)\nv\big)\,ds + \gv_b. \label{eq:partialTorque}
\end{align}
Note that $\Avv$ and $\Aww$ are symmetric and positive semi-definite while $(\Avw)^T = \Awv$. 
Let $\Ma \in \Real^{6\times 6}$ denote the composite added mass matrix (tensor), 
\begin{align*}
   \Ma &= \begin{bmatrix}
    \Avv & \Avw \\
    \Awv & \Aww
\end{bmatrix} .
\end{align*}
This matrix is symmetric and positive semi-definite since for any vector $\wv = [\av~ \bv]^T \in \Real^6$, $\av\in \Real^3$, $\bv\in \Real^3$
\begin{align*}
  \wv^T \Ma \wv &=  [ \av~  \bv ]
  \begin{bmatrix}
    \Avv & \Avw \\
    \Awv & \Aww
   \end{bmatrix}
      \begin{bmatrix} \av \\ \bv \end{bmatrix} = \int z_f \Big( \| \nv^T\av\|^2  + 2(\nv^T\av)(Y\nv)^T \bv + \| (Y\nv)^T \bv\|^2 \Big) ds , \\
       &= \int z_f \Big( (\nv^T\av) + (Y\nv)^T\bv\Big)^2 \,ds.
\end{align*}
The rigid body equations of motion~\eqref{eq:centerOfMassPositionEquation}-\eqref{eq:axesOfInertiaEquation} can now be written in the form 
\begin{align}
  \begin{bmatrix}
      I & 0 & 0 & 0 \\
      0 & \mrb I & 0 & 0  \\
      0 &  0  & A & 0 \\
      0 & 0 & 0 & I  
  \end{bmatrix}
  \begin{bmatrix} \dotxvcm \\ \dotvvcm \\ \dot\omegav \\ \dot E \end{bmatrix}
 +
  \begin{bmatrix}
    0 & -I & 0 & 0 \\
    0 & \Avv & \Avw & 0  \\
    0 & \Awv & \Aww +\OmegaMatrix A & 0 \\
    0 & 0  & 0 & -\OmegaMatrix 
   \end{bmatrix}
  \begin{bmatrix} \xvcm \\ \vvcm \\ \omegav \\ E \end{bmatrix}
 = 
 \begin{bmatrix} 0 \\ \Forcevtilde \\ \Torquevtilde \\ 0 \end{bmatrix} . \label{eq:addedMassRigidBodyEquations}
\end{align}
We will refer to equations~\eqref{eq:addedMassRigidBodyEquations} as the {\em added-mass} Newton-Euler equations.

{\em Remark:} By solving equations~\eqref{eq:addedMassRigidBodyEquations} with
an implicit time-stepping scheme that treats the added-mass terms implicitly,
the rigid body equations can be advanced with a {\em large} time step even as
$\mrb$ and $A$ approach zero, {\em provided} $\Ma$ is nonsingular. This is
described in more detail in Section~\ref{sec:multidimensionalTimeStepping}.

{\em Remark:} In \ref{sec:addedMassMatrices} we present the form of the added-mass matrices for
some common body shapes.

\newcommand{\InteriorIndex}{\Ic_I}% fluid index points on the body 
\newcommand{\BodyIndex}{\Ic_B}% fluid index points on the body 
\newcommand{\GhostIndex}{\Ic_G}% fluid index points on the body 
\section{The multi-dimensional time-stepping algorithm} \label{sec:multidimensionalTimeStepping}

We make use of overlapping grids to treat multi-dimensional problems with moving rigid bodies.
Narrow boundary fitted grids lie next to the bodies and these move with the bodies 
(see the examples in Section~\ref{sec:numericalResults}). One or more stationary
background grids generally cover most of the domain. This approach results in high-quality grids
even as bodies undergo large motions. 
The time-stepping algorithm we use for FSI problems with rigid bodies is described in detail in~\cite{henshaw06},
while that for FSI problems with deforming solids is described in~\cite{banks12a}. In~\cite{henshaw06} the Newton-Euler 
equations for the rigid bodies are solved using a Leap-frog predictor step followed by a trapezoidal rule
corrector step. 

\newcommand{\StageI}{Predict} 
\newcommand{\StageII}{Body}
\newcommand{\StageIII}{Correct}
\newcommand{\StageIV}{Ghost}
\newcommand{\Gvdot}{\dot{\Gv}}

\begin{figure}[hbt]\tableFont 
\begin{center}
% --------------------- Interface  BC -----------------------
\newcommand{\strutt}{\rule{0pt}{10pt}}% strutt to make table height bigger 
\begin{tabular}{|r|c|c|c|} \hline 
\multicolumn{4}{|c|}{The FSI time stepping algorithm}  \strutt \\ \hline
~Stage~  & Condition         & Type                            &   Assigns               \strutt  \\ \hline 
 \StageI(a)   &  Predict body motion, moving grid  & extrapolation & $\xvcm^p,\vvcm^p,\omegav^p,\Ev^p, \Gv_\iv^p$ ~~   \strutt\\
 \StageI(b)   &  Advance fluid $\wv_\iv^n$, $\wv_\iv^p$,~    & PDE         & $\wv_\iv^n$,~$\iv\in\InteriorIndex$,~~  $\wv_\iv^p$,~$\iv\in\BodyIndex$ \strutt\\
 \StageII(a)  &  Compute added mass terms~\eqref{eq:AddedMassMatrixI}-\eqref{eq:partialTorque}  &             &  $A_{11}^p$, $A_{12}^p$, $A_{21}^p$, $A_{22}^p$, $\Fvtilde^p$, $\Gvtilde^p$                                             \strutt \\
 \StageII(b)  &  Advance rigid body~\eqref{eq:addedMassRigidBodyEquations}  & ODEs        &  $\xvcm^{n},\vvcm^{n},\omegav^{n},\Ev^{n}$  \strutt \\
\StageIII(a)  &  Project fluid on body~\eqref{eq:velocityProjection}-\eqref{eq:rhoProjection} & ~projection~ &  $\vv_{\iv}^{n}$, $p_\iv^{n}$, $\rho_\iv^{n}$, ~~$\iv\in\BodyIndex$     \strutt \\
 \StageIII(b) &  Correct moving grid   & projection & $\Gv_\iv^{n}$  \strutt\\
 \StageIV    & Assign fluid ghost values  & PDE, extrapolation &  $\wv_\iv^{n}$, ~~$\iv\in\GhostIndex$       \strutt \\  
   \hline 
\end{tabular}
\caption{The FSI time stepping algorithm for advancing the states of the fluid and rigid body. 
}
\label{tab:interfaceStages}
\end{center}
\end{figure}

For the new interface algorithm developed here, we choose a time-stepping method 
for the Newton-Euler equations~\eqref{eq:addedMassRigidBodyEquations} that
treats the added-mass terms implicitly so that the scheme is well-defined even 
in the limit of zero mass and/or moments of inertia. We use
diagonally implicit Runge-Kutta (DIRK) schemes for this
purpose~\cite{alexander77}.  DIRK schemes have very nice stability and accuracy
properties. The one-stage, first-order accurate DIRK scheme, which we denote by
DIRK1, is just the backward-Euler scheme. For the numerical results in
section~\ref{sec:numericalResults} we will use a two-stage third-order accurate
(A-stable) scheme, denoted by DIRK3, due to Crouzeiux (see~\cite{alexander77}
(2.2)).  In each stage of the DIRK scheme we solve an implicit approximation
to~\eqref{eq:addedMassRigidBodyEquations} by Newton's method.

%
% Cartoon showing the grid for the elastic piston
%
\color{black}
\begin{figure}[hbt]
\begin{center}
\begin{tikzpicture}[scale=1]
\useasboundingbox (-5,-.2) rectangle (5.,.5);
  \filldraw[fill=gray!20,line width=2pt] (-5,-.5) -- (0,-.5) -- (0,.5) -- (-5,.5);
  \draw (-2,0) node {solid};
  \filldraw[fill=blue!10,line width=2pt] ( 5,-.5) -- (0,-.5) -- (0,.5) -- (5,.5);
  \draw (2,0) node {fluid};
  \draw[color=green,line width=2pt] (0,-.5) -- (0,.5);
\end{tikzpicture}
\end{center}
\begin{center}
% --------------------------------------------------------------------------
\begin{tikzpicture}[scale=4]
\useasboundingbox (-1.5,.2) rectangle (1.5,0);  % set the bounding box (so we have less surrounding white space)
% 
% fluid grid
\begin{scope}
\def\ya{.08}
\draw[-,thick,black,yshift=.025 cm] 
   (-.5,0) -- (1.25,0) 
   (1.1,-.1) node {$x$}
   \foreach \x in {-.5,-.25,...,1.}{ (\x,-.025) -- (\x,.025) }
   (-.5,\ya) node {$\wv^n_{-2}$}
   (-.25,\ya) node {$\wv^n_{-1}$}
   (  .0,\ya) node {$\wv^n_{0}$} 
   ( .25,\ya) node {$\wv^n_{1}$} 
   ( .50,\ya) node {$\wv^n_{2}$} 
   ( .75,\ya) node {$\ldots$}; 
\end{scope}
\end{tikzpicture}
\end{center}
\caption{
The fluid grids for two-dimensional problems have a grid point aligned with the boundary of
the rigid body. The solution on the boundary is $\wv^n_{0}$, while $\wv^n_{-2}$ and $\wv^n_{-1}$  denote
the values on the ghost points. For clarity, only one grid line is shown in the direction
normal to the boundary.
}
\label{fig:twoDimensionalGridCartoon}
\end{figure}
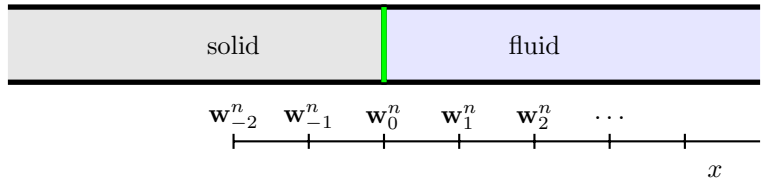

The FSI time stepping algorithm for advancing the fluid and rigid body is
outlined in Figure~\ref{tab:interfaceStages}.  
In a slight difference from the grid arrangement used for the analysis
in one-dimension as illustrated in Fig.\ref{fig:modelFig}, the two-dimensional grids have a grid point aligned
with the boundary of the body as shown in Fig.\ref{fig:twoDimensionalGridCartoon}. 
Let $\wv_\iv^n=(\rho_\iv^n,\vv_\iv^n,p_\iv^n)$ denote the discrete solution in space
and time for the state of the fluid at time $t^n$, where $\iv$ is a
multi-index. Let $(\xvcm^{n},\vvcm^{n},\omegav^{n},\Ev^{n})$ denote the discrete
approximation in time to the state of the rigid body. Let $\xv_\iv^n=\Gv_\iv^n$
denote the (moving) grid points on the fluid grid that lies next to the body and
$\dot\Gv_\iv^n$ the grid velocity (the fluid domain will actually be discretized
with multiple overlapping grids but for clarity we ignore these other grids in
the current discussion).

Suppose that we are given the full state of the discrete solution at time
$t^{n-1}$ and wish to determine the state at the next time step $t^n$.  In the
first stage of the time stepping algorithm, predicted values are obtained for the state of the
solid body at the new time, $(\xvcm^p,\vvcm^p,\omegav^p,\Ev^p)$. These values
can be obtained either from the Newton-Euler equations of motion or using 
extrapolation in time (for a second order accurate scheme we extrapolate using the current
level and two previous 
time levels~\footnote{To extrapolate at $t=0$ we would need the state of the body at $2$ previous times. 
Currently these values must be supplied when the problem is being setup. More generally one could implement a
predictor-corrector style time-stepping algorithm at startup that would obviate the need for negative time state values.}
). 
 From the predicted state of the body we
can obtain predicted values for the grid location, $\Gv_\iv^p$, and grid velocity, $\dot\Gv_\iv^p$; 
these values are needed to advance the fluid state.  Note that the grids move as a rigid body and do not deform.
In the second stage of the time
stepping algorithm we obtain the new values of the fluid state
$\wv_\iv^n=(\rho_\iv^n,\vv_\iv^n,p_\iv^n)$ at interior grid points,
$\iv\in\InteriorIndex$, and predicted values, $\wv_\iv^p=(\rho_\iv^p,\vv_\iv^p,p_\iv^p)$,
at points on the body surface, $\iv\in\BodyIndex$. 
These values are obtained using our high-order Godunov-based
upwind scheme~\cite{henshaw06}. At this stage no boundary conditions have been applied on the fluid at the body surface.
Given the predicted fluid states $\wv_\iv^p$ we can compute the partial body
forces~\eqref{eq:partialForce}-\eqref{eq:partialTorque} and the added mass
matrices~\eqref{eq:AddedMassMatrixI}-\eqref{eq:AddedMassMatrixII} using numerical
integration over the surface of the body.
Note that it is straightforward to compute the coefficients of the added mass matrices using numerical 
quadrature even for variable impedance and bodies of arbitrary shape.
 We then solve the added-mass Newton-Euler 
equations~\eqref{eq:addedMassRigidBodyEquations} (e.g. with a DIRK scheme) to determine
the corrected state of the rigid-body at the new time,
$(\xvcm^{n},\vvcm^{n},\omegav^{n},\Ev^{n})$.
 The predicted state of the fluid on the solid body is then
corrected by setting the fluid velocity equal to the 
(local) body velocity and the fluid pressure to equal its
projected value,
\begin{alignat}{3}
   \vv_\iv^{n} &= \vv_{b,\iv}^{n}, \quad&& \iv\in\BodyIndex, \label{eq:velocityProjection} \\
   -p_\iv^{n} &= -p_\iv^{p} + z^p\nv^T\big( \vv_\iv^p - \vv_{b,\iv}^{n}  \big), \quad&& \iv\in\BodyIndex . \label{eq:pressProjection} 
\end{alignat}
Here the local body velocity is $\vv_{b,\iv}=\vvcm^{n}+W^n(\rv^n_\iv-\xvcm^{n})$, where $\rv_\iv^n$ denotes
the location of a point on the body surface, and where $W^n$ is defined from $\omegav^n$ using~\eqref{eq:Wdef}.
After projecting the pressure, the density is corrected using 
\begin{equation}
 \rho_\iv^{n} = \rho_\iv^p \Big( p_\iv^{n}/p_\iv^{p}\Big)^{1/\gamma}, \qquad \iv\in\BodyIndex , \label{eq:rhoProjection}
\end{equation}
which ensures that the entropy of the predicted state equals that of the corrected state.
The fluid grid, $\Gv_\iv^n$, and grid velocity, $\dot{\Gv}_\iv^n$, at the new time are
corrected from the predicted values to match the new state of the rigid body.
In the final stage of the time stepping algorithm, the ghost values of fluid state that lie
adjacent to the body surface are updated using the appropriate
boundary conditions and compatibility conditions, see~\cite{henshaw06,banks12a} for more details.

\section{Numerical results in two space dimensions} \label{sec:numericalResults}

In this section we present numerical results in two-dimensions that demonstrate
the accuracy and stability of the added-mass interface algorithm when applied to
light rigid bodies. A pressure driven light piston problem is used to examine
the accuracy of the two-dimensional added-mass algorithm for an FSI problem with
an analytic solution. A smoothly accelerated light rigid body in the shape of an ellipse is used to
evaluate the scheme for a two-dimensional problem that includes the rotational
added-mass terms. Solutions using the new added-mass algorithm are compared to the old algorithm, which is necessarily run at a
small CFL number to avoid exponential blowup. Although the exact solution to this problem is not known,
a posteriori estimates of the errors are determined from solutions on a sequence of grids
of increasing resolution. In two final examples we simulate the impingement of Mach 2 shocks
on zero mass rigid bodies in 2D. We include two cases, the first an ellipse and the second
a complex body with appendages that we call a {\em starfish}. These cases demonstrate the 
robustness of the added-mass algorithm for very difficult situations. Solutions of these shock driven
ellipse problem are computed at varying grid resolutions and compared.  These
results include computations that use dynamic adaptive mesh refinement (AMR).

% --------------------------------------------------------------------
\subsection{Pressure driven light piston} \label{sec:pressureDrivenLightPiston}

%
% Cartoon showing the x-t diagram elastic piston
%
{
\newcommand{\fsi}{G}
\newcommand{\figWidth}{5.13cm}
\newcommand{\trimfig}[2]{\trimPlot{#1}{#2}{0.05}{0.0}{.18}{.18}}
\begin{figure}[hbt]
\begin{center}
\begin{tikzpicture}[scale=4]
\useasboundingbox (-.5,0.0) rectangle (2.5,1.);  % set the bounding box (so we have less surrounding white space)
% axes: 
\draw[->] (-1.,0) -- (1.,0) node[anchor=south] {$x$};
\draw[->] (0,0) -- (0,1) node[anchor=west] {$t$} ;
%
% - piston:
\draw[very thick,green] (0,0) .. controls (0,.5) and (-.5,.9) .. (-.4,.8) node[anchor=south,yshift=-2pt,blue] {$x=\fsi(t)$};
\draw[blue] (-.08,.3) -- (.5,.75) node[anchor=south west,yshift=-4pt] {$C^+$};
\fill[blue] (.21,.525) circle (.5 pt);
\draw[blue] (.21,.525) node[anchor=west] {~$(x,t)$};
\fill[blue] (-.068,.32) circle (.5 pt) node[anchor=east,blue] {~$(\fsi(\tau),\tau)$};
\draw[blue] (0,0) -- (.6,.5) node[anchor=west] {$x=a_0 t$};
\draw[blue] (.8,.25) node {$\begin{bmatrix} \rho_0 \\ v_0 \\ p_0 \end{bmatrix}$};
\begin{scope}[yshift=-1pt]
% -- piston: 
\newcommand{\pcolour}{black}
\newcommand{\xap}{-.5}\newcommand{\xbp}{0.}
\newcommand{\yap}{-.1}\newcommand{\ybp}{0.0}
\draw[thick,\pcolour,fill=\pcolour,opacity=0.25] (\xap,\yap) rectangle (\xbp,\ybp);
\draw[thick,\pcolour] (\xap,\yap) rectangle (\xbp,\ybp);
\draw[thick,black] (-.25,-.05) node {piston};
\draw[thick,->,xshift=-0.5pt] (-.65,-.05) node[anchor=east] {$f_b$} -- (\xap,-.05) ;
% -- piston: 
\newcommand{\fcolour}{blue}
\newcommand{\xaf}{0.0}\newcommand{\xbf}{.9}
\newcommand{\yaf}{-.1}\newcommand{\ybf}{0.0}
\draw[thick,\fcolour,fill=\fcolour,opacity=0.25] (\xaf,\yaf) rectangle (\xbf,\ybf);
\draw[thick,blue] (.45,-.05) node {fluid};
\end{scope}
\begin{scope}[xshift=1.4cm,yshift=-.05cm]
  \draw (0,0) node[anchor=south west,xshift=-8pt,yshift=+0pt] {\trimfig{images/plugGrid0p0}{\figWidth}};
  \draw (.2,.75) node[anchor=south west, draw,fill=white, rounded corners,thick] {\scriptsize piston face};
  \draw[->,thick,black] (.2,.75) -- (0.0,.5);
  \draw[-,thick,black,yshift=0pt] (0,0.0) -- (1.2,0);
  \draw[-,thick,black,yshift=0pt] (0,-.03) -- (0.0,.03) node[anchor=north,yshift=-6pt] {\scriptsize $0$};
  \draw[-,thick,black,yshift=0pt] (.4,-.03) -- (0.4,.03) node[anchor=north,yshift=-6pt] {\scriptsize $.5$};
  \draw[-,thick,black,yshift=0pt] (.8,-.03) -- (0.8,.03) node[anchor=north,yshift=-6pt] {\scriptsize $1.$};
  \draw[-,thick,black,yshift=0pt] (1.2,-.03) -- (1.2,.03) node[anchor=north,yshift=-6pt] {\scriptsize $1.5$};
  \draw[<-,black,xshift=1.3cm] (0,0.065) -- (0,.40) node[anchor=south] {$\Area$};
  \draw[->,black,xshift=1.3cm] (0,0.535) -- (0,.873);
\end{scope}
\end{tikzpicture}
\end{center}
\caption{Left: the $x$-$t$ diagram for the pressure driven piston problem with a receding piston.
       Right: overlapping grid $\Gp^{(2)}$ for the fluid region at $t=0.0$. The green grid moves with the piston. The blue background
grid does not move. The interpolation points are marked as black dots.} \label{fig:pressureDrivenPistonCartoon}
\end{figure}
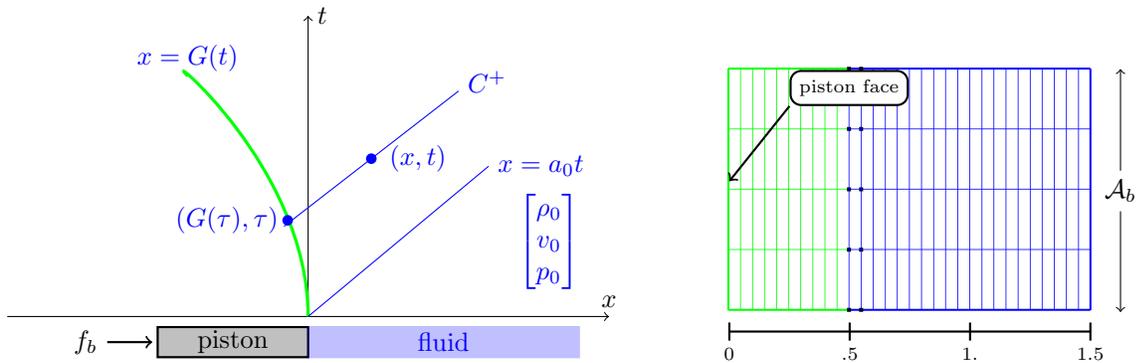
}

The geometry of the one-dimensional pressure driven piston problem is shown in Fig.~\ref{fig:pressureDrivenPistonCartoon}
A compressible fluid occupying the region $x>G(t)$ lies adjacent to a piston of mass $\mrb$ and cross-sectional
area $\Area$.  The
face of the piston that lies next to the fluid follows the curve $x=G(t)$ as time evolves. A body force $f_b(t)$ also
acts on the piston. The exact solution to this problem can be determined for a fluid that is initially at rest and the form
of this solution is given in~\cite{henshaw06}. When $f_b(t)=0$, the exact solution can be determined explicitly. For 
general $f_b(t)$, the case considered here, the exact solution can be accurately approximated by numerical integration of the appropriate
ordinary differential equations.

\newcommand{\pistonTable}{\tableFont
\begin{tabular}{|l|c|c|c|c|c|c|c|} \hline 
Grid  & $h_j$ &  $\eem_\rho$  & r & $\eem_u$  & r & $\eem_T$  & r \\ \hline 
$\Gp^{(8)}$  & 1/80  &  \num{6.3}{-5} &      & \num{1.2}{-4} &      & \num{3.1}{-5} &       \\ \hline
$\Gp^{(16)}$ & 1/160 &  \num{1.8}{-5} &  3.5 & \num{3.3}{-5} &  3.7 & \num{8.8}{-6} &  3.5  \\ \hline
$\Gp^{(32)}$ & 1/320 &  \num{4.2}{-6} &  4.2 & \num{8.5}{-6} &  3.9 & \num{2.2}{-6} &  3.9  \\ \hline
    rate     &       &        $1.95$  &      &       $1.94$  &      &       $1.89$  &       \\ \hline  
\end{tabular}
}
\begin{figure}[hbt]
\newcommand{\figWidth}{6.75cm}
\newcommand{\trimfig}[2]{\trimFig{#1}{#2}{0.0}{0.}{.0}{.0}}
\begin{center}
\begin{tikzpicture}[scale=1]
  \useasboundingbox (0,.75) rectangle (15.,5.75);  % set the bounding box (so we have less surrounding white space)
  \draw (  0, 0) node[anchor=south west,xshift=-4pt,yshift=+0pt] {\trimfig{images/pressureDrivenLightPiston}{\figWidth}};
  \draw (7.25, 2) node[anchor=south west,xshift=-4pt,yshift=+0pt] {\pistonTable};
\end{tikzpicture}
\end{center}
  \caption{Results for a pressure driven light piston of mass $\mrb=10^{-6}$. 
    Left: computed and exact solution at $t=1.$ using $\Gp^{(8)}$. Right: maximum errors
    and estimated convergence rates at time $t=1.$ }
  \label{fig:pressureDrivenLightPiston}
\end{figure}

We solve the pressure driven piston problem on a two-dimensional overlapping grid denoted by $\Gp^{(j)}$,
where $j$ denotes the grid resolution (see Figure~\ref{fig:pressureDrivenPistonCartoon}).
The grid spacing in the $x$-direction is chosen to be $\Delta
 x^{(j)} = 1/(10 j)$. The spacing in the $y$-direction is held fixed at $\Delta y  = 2/10$. 
A background Cartesian grid covers the domain $[-0.5,1.5]\times[0,1]$ and remains stationary. A second
Cartesian grid initially covers the domain $[0,0.5]\times[0,1]$ and moves over time according the
piston motion. 

\newcommand{\cRate}{\beta}
The pressure driven piston problem is solved for a piston of mass $\mrb=10^{-6}$. The initial conditions for
the fluid are $(\rho_0,v_0,p_0)=(1.4,0.,1)$ with $\gamma=1.4$. 
The body force is chosen to be $f_b(t)=p_0 \Area ( 1 - \frac{1}{2}t^3)$ which results in a piston that smoothly 
recedes to the left and for which we expect the numerical solution to be second-order accurate in the max-norm. 
The computed and exact solutions are shown in Fig.~\ref{fig:pressureDrivenLightPiston} for results using
grid $\Gp^{(8)}$ and these are in excellent agreement.
Figure~\ref{fig:pressureDrivenLightPiston} also gives the max-norm errors for solutions computed
on a sequence of grids of increasing resolution. 
The values in the columns labelled ``r'' give the ratio of the error on the current grid to that on the previous coarser grid,
a ratio of $4$ being expected for a second-order accurate method.
The convergence rate, $\cRate$, is estimated from a least-squares fit to the log of the error equation $e(h) = C h^\cRate$. 
The results show that the
solution is converging at close to second-order.

% --------------------------------------------------------------------------------------------------------
\newcommand{\Gre}{\Gc_{\rm re}}% grid for rotated ellipse
\newcommand{\ds}{\Delta s}
\subsection{Smoothly accelerated ellipse} \label{sec:acceleratedEllipse}

In this example we consider a light rigid body in the shape of an ellipse that is accelerated by a smoothly varying 
body force. We compare the solution from the new added-mass algorithm to that from the old algorithm, the latter requiring
a very small time step to avoid exponential blowup when the mass of the body is small. 

\begin{figure}[hbt]
\newcommand{\figWidtha}{6.cm}
\newcommand{\figWidth}{7.cm}
\newcommand{\trimfiga}[2]{\trimFig{#1}{#2}{0.0}{0.}{.0}{.0}}
\newcommand{\trimfig}[2]{\trimFig{#1}{#2}{0.0}{0.}{.0}{.0}}
\begin{center}
\begin{tikzpicture}[scale=1]
  \useasboundingbox (0,.5) rectangle (14.,6.5);  % set the bounding box (so we have less surrounding white space)
  \draw (0,0) node[anchor=south west,xshift=-4pt,yshift=+0pt] {\trimfiga{images/ellipseAngle45Grid}{\figWidtha}};
  \draw (7, 0) node[anchor=south west,xshift=-4pt,yshift=+0pt] {\trimfig{images/acceleratedEllipseMass1e-3}{\figWidth}};
\end{tikzpicture}
\end{center}
  \caption{Accelerated ellipse. 
        Left: overlapping grid $\Gre^{(1)}$ at time $t=0$. 
        Right: time histories of the rigid body velocity $(v_1,v_2)$, angular momentum $w_3$, torque $T_3$ and
        forces $(F_1,F_2)$ for an ellipse of mass $\mrb=10^{-3}$ and moment of inertial $I_3=10^{-3}$ using the old algorithm (black lines) and new algorithm (using grid $\Gre^{(2)}$). ($T_3$ and $F_2$ are scaled by a factor of $100$
for graphical purposes). The force shown on the body does not include the contribution from the external body force.}
  \label{fig:acceleratedEllipse}
\end{figure}

The overlapping grid for this {\em rotated-ellipse} problem is denoted by $\Gre^{(j)}$ where $j$
denotes the grid resolution (grid $\Gre^{(1)}$ is shown in
Fig.~\ref{fig:acceleratedEllipse}).  The grid consists on a stationary
background Cartesian grid for the region $[-2,2]\times[-2,2]$, with grid spacing
$\ds^{(j)}=1/(10 j)$. A narrow boundary fitted grid is located next to the
surface of the elliptical body, and this grid will move to follow the motion of
the body.  The surface of the body is defined by an ellipse, which has major and
minor axes of lengths $1.4$ and $0.7$, respectively, and which is rotated by
$\pi/4$ in the counterclockwise direction.  The boundary fitted grid extends $8$
grid lines in the normal direction (the grid in
Fig.~\ref{fig:acceleratedEllipse} shows an additional ghost line), and the
grid spacing in the normal direction is slightly clustered near the ellipse
surface. The number of points in the tangential direction is chosen so the grid
spacing is approximately $\ds^{(j)}$.

The ellipse is accelerated using a body force that smoothly ramps from zero to one on the time
interval $[0,\half]$ and then smoothly ramps back to zero over the interval $[\half,1]$.
In particular, the body force is in the $x$-direction and is given by
\begin{align}
   f_x(t) & = R(2 t) - R(2 t- 1) , ~~\text{where},~~
  R(t) = \begin{cases}
                       0 & \text{if $t\le 0$} \\ 
                       35 t^4 -84 t^5 + 70 t^6 -20 t^7 & \text{if $0<t<1$} \\
                       1 & \text{if $t\ge 0$}
                  \end{cases}.  \label{eq:smoothBodyForce}
\end{align}
Note that the ramp function $R$ has three continuous derivatives since the first three derivatives of $R(t)$  are zero at $t=0$ and $t=1$.

We consider an an ellipse of mass $\mrb=10^{-3}$ and moment of inertia $I_3=10^{-3}$.
The fluid is taken as an ideal gas with $\gamma=1.4$.
The ellipse and fluid are initially at rest with the initial fluid state given by $(\rho,v_1,v_2,p)=(1/\gamma,0,0,1)$. 
The smooth body force is given by \eqref{eq:smoothBodyForce}. 
The boundary conditions on the Cartesian grid, which have little influence for this problem,
are inflow on the left with all variables given, outflow on the right side  (all variables extrapolated)
and slip walls on the top and bottom.
For comparison, we solve this problem using both the old FSI algorithm and the new {\em added-mass} FSI algorithm.
The new algorithm is run at a CFL number of $0.9$. The old algorithm experiences exponential blowup at this CFL number and is instead run at a CFL number of $1/100$. 

In the right-hand side of Fig.~\ref{fig:acceleratedEllipse} we show the state
of the rigid body over time for the old and new algorithms. The body initially
accelerates upward and to the right as indicated by the components of the body
velocity and rotates in a counter-clockwise direction as indicated by the
angular velocity.  The forces on the body shown in
Fig.~\ref{fig:acceleratedEllipse} do not include the contributions from the
external body force and thus represent the force exerted by the fluid on the
body. The force $F_1$ indicates that the fluid pushes back on the body to nearly
balance the external force $f_x(t)$.  The results from the old and new algorithm are
nearly indistinguishable in this plot indicating that the new algorithm provides
an accurate approximation even with a time step that is nearly 100 times larger
than the old algorithm.

{
\newcommand{\figWidth}{6.4cm}
\newcommand{\trimfig}[2]{\trimPlot{#1}{#2}{.05}{.125}{.07}{.1}}
% ----
\newcommand{\cbHeight}{6.3cm}% colour bar height
\newcommand{\xcb}{6.7cm}% colour bar lower left corner
\newcommand{\ycb}{0.1cm}% colour bar lower left corner
\newlength{\ycbTop}%
\setlength{\ycbTop}{\ycb+\cbHeight}% colour bar top label position
\newlength{\ycbMid}%
\setlength{\ycbMid}{\ycb+\cbHeight*\real{.5}}% colour bar top label position
\newcommand{\xLabel}{6.5cm}% label position
\newcommand{\yLabel}{6.5cm}% label position
\newcommand{\trimfigcb}[2]{\includegraphics[height=#2, clip, trim=17cm 2.35cm 1.65cm 2.35cm]{#1}}
% -- Here is the command to plot a frame, time-label and colour bar:
\newcommand{\timeFrame}[6]{%
 \begin{scope}[xshift=#5cm,yshift=#6cm]
  \draw ( 0.0,0) node[anchor=south west] {\trimfig{#1}{\figWidth}};
  % - labels
  \draw (\xLabel,\yLabel) node[draw,fill=white,anchor=east,xshift=+1pt,yshift=-4pt] {\scriptsize #2};
  % -- colour bar
  \draw (\xcb,\ycb) node[anchor=south west,xshift= +0pt,yshift=+0pt] {\trimfigcb{images/colourBarLines}{\cbHeight}};
  \draw (\xcb,\ycb) node[anchor=south west,xshift= +8pt,yshift=+1pt] {\scriptsize $#3$};
  \draw (\xcb,\ycbTop) node[anchor=south west,xshift= +8pt,yshift=-4pt] {\scriptsize $#4$};
  \draw (\xcb,\ycbMid) node[anchor=      west,xshift=+10pt,yshift=2pt] {\small $p$};
 \end{scope}
}
%
% *************************************************************************************************
% 
\begin{figure}[hbt]
\begin{center}
\begin{tikzpicture}[scale=1]
  \useasboundingbox (0,.5) rectangle (16.,14.0);  % set the bounding box (so we have less surrounding white space)
  \timeFrame{images/ellipseAccelMass1em3P0p5}{Added-mass algorithm: $t=0.5$}{.31}{1.96}{0}{7.0}
  \timeFrame{images/ellipseAccelMass1em3P1p0}{Added-mass algorithm: $t=1.0$}{.18}{1.59}{8}{7.0}
  \timeFrame{images/ellipseAccelMass1em3OldCfl01P0p5}{Old algorithm: $t=0.5$}{.31}{1.96}{0}{0}
  \timeFrame{images/ellipseAccelMass1em3OldCfl01P1p0}{Old algorithm: $t=1.0$}{.18}{1.59}{8}{0}
\end{tikzpicture}
\end{center}
  \caption{Accelerated ellipse: pressure at $t=0.5$ and $t=1.0$ for the old algorithm running at CFL number $10^{-2}$ (bottom) 
       and new {\em added-mass} algorithm running at CFL number $0.9$ (top) for grid $\Gre^{(16)}$. }
  \label{fig:acceleratedEllipseFlow}
\end{figure}
}

Fig.~\ref{fig:acceleratedEllipseFlow} shows contours of the pressure field at times $t=0.5$ and $t=1.0$ for 
both the old and new algorithms. The accelerating body generates a forward moving wave that steepens over
time and which has formed a shock by $t=1.0$. The solutions from the old and new algorithm are in excellent agreement with
almost no detectable differences. 
For a more quantitative evaluation of the accuracy we determine a-posteriori error estimates by solving the problem on a sequence of grids
of increasing resolution and using the error estimation approach described in~\cite{henshaw08,banks09a}. 
Fig.~\ref{tab:acceleratedEllipseConvergenceMaxNorm} shows the estimated max-norm errors and convergence rates at time $t=0.4$ when
the solution is still smooth. These results show that the solution is converging at close to second-order accuracy. We note that for
these results the slope-limiter was turned off in the Godunov method since this slope limiter can reduce the order of accuracy. 
Fig.~\ref{tab:acceleratedEllipseConvergence} shows the estimated $L_1$-norm errors and convergence rates at time $t=1.0$ when the
solution is no longer smooth. In this case the results show that the solution is converging at rates close to $1$, which are the
expected rates for problems with shocks. We note that the discrete $L_1$-norm of a grid function is computed in the usual way by summing the
absolute values of the values at each grid point and dividing by the total number of grid points~\cite{henshaw08}.

\renewcommand{\tableFontSize}{\small}
\begin{figure}[hbt]\tableFontSize
\begin{center}
\begin{tabular}{|c|c|c|c|c|c|c|c|c|c|} \hline
Grid ~$\Gc^{(j)}$ &~~$h_j$~~& $\eem_\rho$ & $r$ & $\eem_u$ & $r$ & $\eem_v$ & $r$  & $\eem_p$  & $r$ \\ \hline 
~$\Gre^{(8)} $~   & 1/40   & \num{8.0}{-3} &      & \num{5.3}{-3} &      & \num{3.4}{-3} &       & \num{8.3}{-3} &      \\ \hline	
~$\Gre^{(16)} $~  & 1/80   & \num{2.2}{-3} &  3.7 & \num{1.4}{-3} &  3.8 & \num{9.7}{-4} &  3.5  & \num{2.3}{-3} &  3.7 \\ \hline	
~$\Gre^{(32)}$~   & 1/160  & \num{5.9}{-4} &  3.7 & \num{3.7}{-4} &  3.8 & \num{2.8}{-4} &  3.5  & \num{6.2}{-4} &  3.7 \\ \hline	
\rateLabel        &        &     1.88      &      &     1.93      &      &     1.80      &       &     1.87      &      \\ \hline  
\end{tabular}
\caption{A posteriori estimated errors (max-norm) and convergence rates for the accelerated ellipse at $t=0.4$ (no slope limiter).
   The scheme converges at close to second-order accuracy in the max-norm when the solution is smooth.}
\label{tab:acceleratedEllipseConvergenceMaxNorm}
\end{center}
\end{figure}

\begin{figure}[hbt]\tableFontSize
\begin{center}
\begin{tabular}{|c|c|c|c|c|c|c|c|c|c|} \hline
Grid ~$\Gc^{(j)}$ &~~$h_j$~~& $\eem_\rho$ & $r$ & $\eem_u$ & $r$ & $\eem_v$ & $r$  & $\eem_p$  & $r$ \\ \hline 
~$\Gre^{(8)} $~   & 1/40 & \num{2.1}{-3} &      & \num{9.3}{-4} &      & \num{9.6}{-4} &       & \num{2.1}{-3} &      \\ \hline 
~$\Gre^{(16)} $~  & 1/80 & \num{9.9}{-4} &  2.1 & \num{4.3}{-4} &  2.1 & \num{4.6}{-4} &  2.1  & \num{9.6}{-4} &  2.2 \\ \hline 
~$\Gre^{(32)}$~   & 1/160& \num{4.7}{-4} &  2.1 & \num{2.0}{-4} &  2.1 & \num{2.2}{-4} &  2.1  & \num{4.5}{-4} &  2.2 \\ \hline 
\rateLabel        &      &     1.08      &      &     1.09      &      &     1.07      &       &     1.11      &     \\ \hline  
\end{tabular}
\caption{A posteriori estimated errors ($L_1$-norm) and convergence rates for the accelerated ellipse at $t=1.0$.
       The scheme converges at close to first-order accuracy in the $L_1$-norm when the solution is not smooth.}
\label{tab:acceleratedEllipseConvergence}
\end{center}
\end{figure}

% --------------------------------------------------------------------------------------------------------
\subsection{Shock driven zero mass ellipse} \label{sec:shockDrivenEllipse}

The shock driven ellipse problem consists of a Mach 2 shock that impacts an
ellipse of zero mass and zero moment of inertia.  This example demonstrates the robustness of the new
added-mass algorithm on a difficult problem for which the old rigid-body FSI
algorithm would fail for any time-step, no matter how small. We note that since
the mass and moments of inertial of the body are zero in the Newton-Euler
equations~\eqref{eq:addedMassRigidBodyEquations}, the linear and angular velocities of the body respond immediately to ensure the
net force on the body is zero; there is no damping in the response from the body's inertia.

The overlapping grid for this problem, $\Gre^{(j)}$ is the same as that used in Section~\ref{sec:acceleratedEllipse}. 
We use adaptive mesh refinement in some of the computations of this section. Let $\Gre^{(j\times 4)}$ denote the
AMR grid that has a base grid $\Gre^{(j)}$ with grid spacing $\Delta s^{(j)}\approx 1/(10j)$ together with
one level of refinement grids of refinement factor $4$. The effective resolution of the AMR grid $\Gre^{(j\times 4)}$ is
thus $\Delta s^{(j\times4)}\approx 1/(40j)$. We note that the AMR grids are added to both the background grid and
to the component grid around the ellipse, refer to~\cite{henshaw06} for further details of the moving-grid AMR approach.

The initial conditions in the fluid consist of a shock located at $x=-1$ with initial state $(\rho,u,v,p)=(2.6667,1.25,0,3.214256)$ ahead
of the shock and $(\rho,u,v,p)=(1,0,0,1.4)$ behind the shock. The boundary conditions are supersonic inflow (all variables specified)
on the left face of the background grid and supersonic outflow (all variables extrapolated) on the other faces of the background grid.

{
\begin{figure}[hbt]
\newcommand{\figWidth}{7.5cm}
\newcommand{\trimfig}[2]{\trimFig{#1}{#2}{0.0}{0.}{.0}{.0}}
\begin{center}
\begin{tikzpicture}[scale=1]
  \useasboundingbox (0,.75) rectangle (8.,6.5);  % set the bounding box (so we have less surrounding white space)
  \draw ( 0, 0) node[anchor=south west,xshift=-4pt,yshift=+0pt] {\trimfig{images/shockDrivenEllipse}{\figWidth}};
%
% \draw (current bounding box.south west) rectangle (current bounding box.north east);
% grid:
% \draw[step=1cm,gray] (0,0) grid (8,6);
\end{tikzpicture}
\end{center}
  \caption{Shock-drive ellipse: time histories of the center of mass, $(x_1,x_2)$, the velocity of the center of
  mass, $(v_1,v_2)$ and the angular velocity $w_3$. The colored lines are results from the coarse
  grid $\Gre^{(8)}$ while  the black lines are results using the finer grid $\Gre^{(32)}$.}   
  \label{fig:shockDrivenEllipseRigidBodyInfo}
\end{figure}
}

{
\newcommand{\schlierenName}{ellipseShockMassZero16l1r4Schlieren}
\newcommand{\amrName}{ellipseShockMassZero16l1r4AmrP}
\newcommand{\figWidth}{6.5cm}
\newcommand{\trimfig}[2]{\trimFig{#1}{#2}{.23}{.23}{.23}{.23}}
% ----
\newcommand{\cbHeight}{6.3cm}% colour bar height
\newcommand{\xcb}{6.7cm}% colour bar lower left corner
\newcommand{\ycb}{0.1cm}% colour bar lower left corner
\setlength{\ycbTop}{\ycb+\cbHeight}% colour bar top label position
\setlength{\ycbMid}{\ycb+\cbHeight*\real{.5}}% colour bar top label position
\newcommand{\xLabel}{6.5cm}% label position
\newcommand{\yLabel}{6.5cm}% label position
\newcommand{\trimfigcb}[2]{\includegraphics[height=#2, clip, trim=17cm 2.35cm 1.65cm 2.35cm]{#1}}
\newcommand{\timeFrame}[6]{%
 \begin{scope}[xshift=#5cm,yshift=#6cm]
  \draw ( 0.0,0) node[anchor=south west] {\trimfig{#1}{\figWidth}};
  % - labels
  \draw (\xLabel,\yLabel) node[draw,fill=white,anchor=east,xshift=+1pt,yshift=-4pt] {\scriptsize $t=#2$};
  % -- colour bar
  \draw (\xcb,\ycb) node[anchor=south west,xshift= +0pt,yshift=+0pt] {\trimfigcb{images/colourBarLines}{\cbHeight}};
  \draw (\xcb,\ycb) node[anchor=south west,xshift= +8pt,yshift=+1pt] {\scriptsize $#3$};
  \draw (\xcb,\ycbTop) node[anchor=south west,xshift= +8pt,yshift=-6pt] {\scriptsize $#4$};
  \draw (\xcb,\ycbMid) node[anchor=      west,xshift=+10pt,yshift=2pt] {\small $p$};
 \end{scope}
}
%
% *************************************************************************************************
% 
\begin{figure}[htb]
\begin{center}
\begin{tikzpicture}[scale=1]
  \useasboundingbox (0,.5) rectangle (14.,19.5);  % set the bounding box (so we have less surrounding white space)
  \draw ( 0,13.4) node[anchor=south west,xshift=-4pt,yshift=+0pt] {\trimfig{images/\schlierenName4}{\figWidth}};
  \draw ( 0, 6.7) node[anchor=south west,xshift=-4pt,yshift=+0pt] {\trimfig{images/\schlierenName6}{\figWidth}};
  \draw ( 0, 0) node[anchor=south west,xshift=-4pt,yshift=+0pt] {\trimfig{images/\schlierenName10}{\figWidth}};
  \timeFrame{images/\amrName4}{0.4}{.07}{4.3}{7}{13.4}
  \timeFrame{images/\amrName6}{0.6}{.12}{6.1}{7}{6.7}
  \timeFrame{images/\amrName10}{1.0}{.3}{4.6}{7}{0}
\end{tikzpicture}
\end{center}
  \caption{Shock driven zero mass ellipse. Schlieren images (left column) and pressure contours (right column)
 at times $t=0.4$, $t=0.6$ and $=1.0$ using grid $\Gre^{(16\times 4)}$. The block boundaries of the refinement
   grids are shown superimposed on the pressure contours.}%  {\bf Should we use the same colour bar bounds??.} }
  \label{fig:shockDrivenEllipse}
\end{figure}
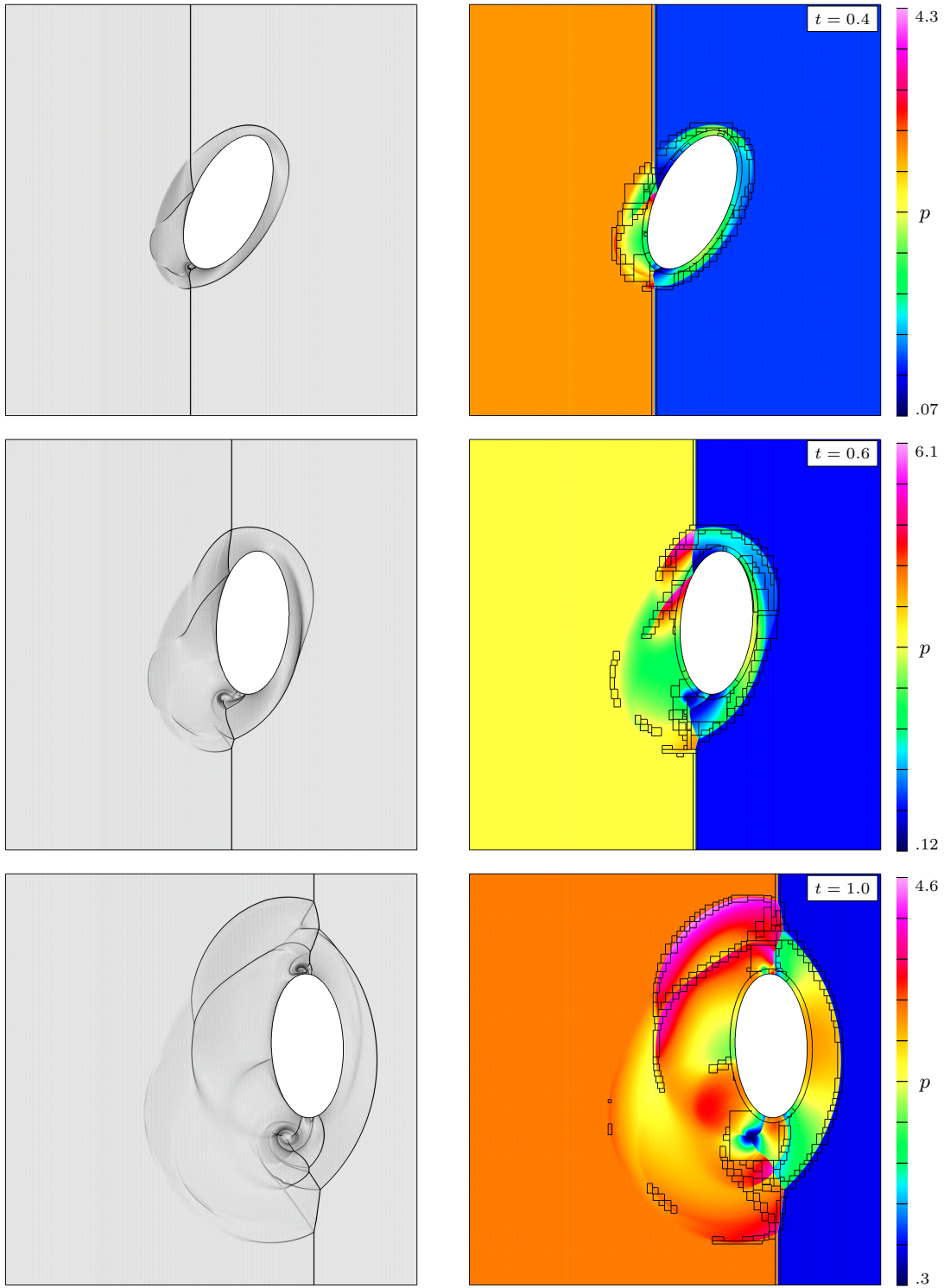
}

Fig.~\ref{fig:shockDrivenEllipseRigidBodyInfo} compares the time history of the rigid body dynamics
from a coarse grid, $\Gre^{(8)}$, and finer grid, $\Gre^{(32)}$,  computation. The velocity and angular velocity are seen to rapidly
increase when the shock first hits the ellipse just after $t=0.2$. The ellipse is initially accelerated up and
to the right and experiences a rapid counter-clockwise rotation. After an initial rise, the angular velocity
decreases and approximately levels off at some positive value\footnote{We note that the long time behavior of the ellipse is of interest
but we do not pursue that line of investigation here.}. 
The results from the two computations are in excellent agreement.

Numerical schlierens and contours of the pressure field at different times are
shown in Fig.~\ref{fig:shockDrivenEllipse} (see~\cite{henshaw06} for a definition of the numerical schlieren function). 
 The computations were performed
with AMR using the grid $\Gre^{(16\times4)}$ (base grid $\Gre^{(16)}$ plus one
refinement level of refinement ratio $4$).   The solution at $t=0.4$ shows
the ellipse has undergone a rapid acceleration upward and to the right combined with a
rapid counter clockwise rotation. The impact of the incident shock on the
ellipse causes a shock to form in the region ahead of the body. By $t=1.0$, a
complex pattern of interacting shocks has formed in the regions above and below
the ellipse.  In Fig.~\ref{fig:shockDrivenEllipseCompare} we compare the
schlieren images of the solution at $t=1.0$ from grids of different
resolutions. These result show good agreement in the basic structure of the
solution, with additional fine scale features appearing as the grid is
refined. This is the expected behavior for inviscid computations.

{
\newcommand{\figWidth}{5.cm}
\newcommand{\trimfig}[2]{\trimPlotb{#1}{#2}{.4}{.15}{.15}{.15}}
\begin{figure}[hbt]
\begin{center}
\begin{tikzpicture}[scale=1]
  \useasboundingbox (0,.5) rectangle (16.,8.);  % set the bounding box (so we have less surrounding white space)
  \draw ( 0.0, 0) node[anchor=south west,xshift=-4pt,yshift=+0pt] {\trimfig{images/ellipseShock32Schlieren10}{\figWidth}};
  \draw ( 5.5, 0) node[anchor=south west,xshift=-4pt,yshift=+0pt] {\trimfig{images/ellipseShockMassZero16l1r4Schlieren10}{\figWidth}};
  \draw (11.0, 0) node[anchor=south west,xshift=-4pt,yshift=+0pt] {\trimfig{images/ellipseShockMassZero8l2r4Schlieren10}{\figWidth}};
\end{tikzpicture}
\end{center}
  \caption{Shock driven zero mass ellipse. A comparison of schlieren images of the solution at $t=1.0$ 
      computed on the coarse grid $\Gre^{(32)}$ (left),
      medium grid $\Gre^{(16\times4)}$ (middle) and fine (AMR) grid $\Gre^{(8\times4\times4)}$ (right).}
  \label{fig:shockDrivenEllipseCompare}
\end{figure}
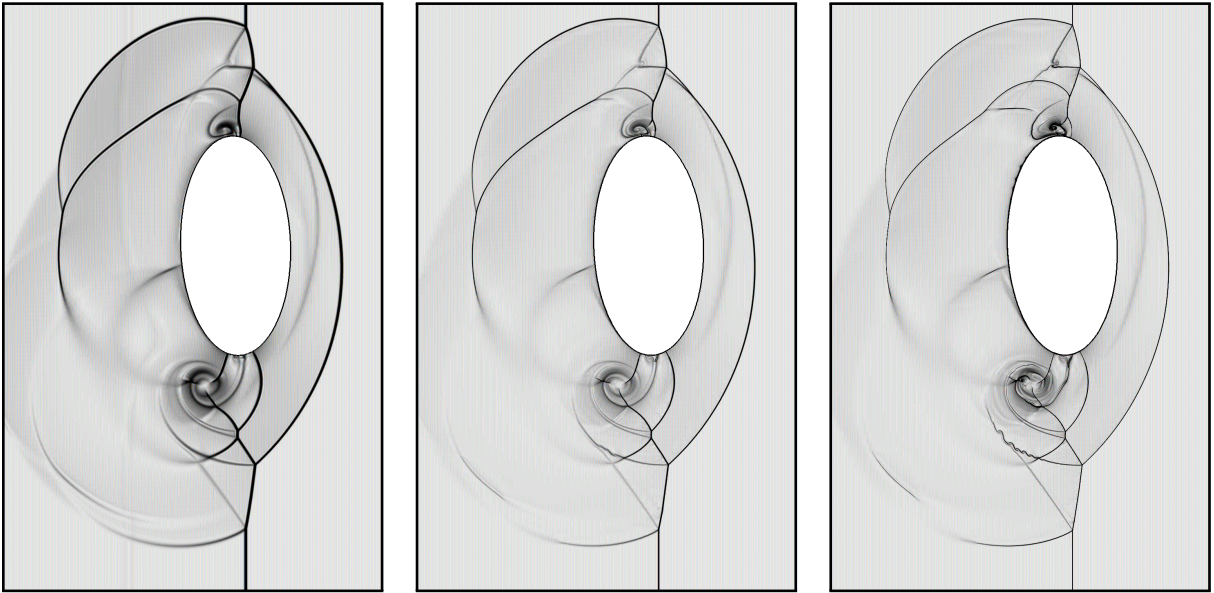
}

\newcommand{\Gcb}{\Gc_{\rm sf}}% grid for the complex boundary 
\newcommand{\Narm}{N_a}
\subsection{Shock impacting a zero mass body with complex boundary} \label{sec:shockComplexBody}

As a final case we consider a Mach 2 shock that impacts a zero mass body with
a complex boundary. This interesting example demonstrates that the new added-mass algorithm
is straight-forward to apply to bodies with complex shapes and that the algorithm remains robust in
the difficult regime of a zero mass body. 
The boundary of the {\em starfish} body is the two-dimensional curve $\xv_S(s)=[x_S(s),y_S(s)]^T$, defined by 
\begin{align}
   \xv_S(s) &= R(s)\, \begin{bmatrix} \cos\hat{\theta}(s) \\ \sin\hat{\theta}(s) \end{bmatrix}, \qquad s\in[0,1], \\
    R(s) &= r_a + r_b r(s), \\
    \hat{\theta}(s) &= \theta(s) + \alpha r(s)^2, \\
    r(s) &= \Big( \half\big[1+\sin(\Narm\, \theta(s))\big] \Big)^2, \\
    \theta(s) &= 2\pi s.
\end{align}
Here $\Narm$ is an integer that defines the number of {\em arms}, $r_a=0.4$ defines
the radius of the base of the arms and $r_b=0.6$ defines the length the arms. The parameter $\alpha$
controls the sweep of the arms and we take $\alpha=\pi/\Narm$. 

\begin{figure}[hbt]
\newcommand{\figWidtha}{7.cm}
\newcommand{\figWidth}{7.cm}
\newcommand{\trimfiga}[2]{\trimFig{#1}{#2}{0.0}{0.}{.0}{.0}}
\newcommand{\trimfig}[2]{\trimFig{#1}{#2}{0.0}{0.}{.0}{.0}}
\begin{center}
\begin{tikzpicture}[scale=1]
  \useasboundingbox (0,.5) rectangle (14.5,7.25);  % set the bounding box (so we have less surrounding white space)
  \draw (0,0) node[anchor=south west,xshift=-4pt,yshift=+0pt] {\trimfiga{images/starFishGrid16}{\figWidtha}};
  \draw (7.5, 0) node[anchor=south west,xshift=-4pt,yshift=+0pt] {\trimfig{images/starFishGrid16Zoom}{\figWidth}};
\end{tikzpicture}
\end{center}
  \caption{Starfish grid. Left: overlapping grid $\Gcb^{(16)}$ at time $t=0$. Right: magnified view showing the smooth and high quality grid near the boundary. }

  \label{fig:starFishGrid}
\end{figure}

The overlapping grid for this problem, $\Gcb^{(j)}$, is shown in Fig.~\ref{fig:starFishGrid}.
The boundary curve is fit with a cubic spline. A volume grid is generated near the surface, to a distance of $0.05$, using the
hyperbolic grid generator in Overture~\cite{HyperbolicGuide}. The grid spacing is chosen to be approximately $\Delta s^{(j)}\approx 1/(10j)$.
As in the previous section, we use adaptive mesh refinement 
and let $\Gcb^{(j\times 4)}$ denote a grid with one level of refinement, with refinement factor 4.
The initial conditions in the fluid consist of a shock located at $x=-1.2$ with initial state $(\rho,u,v,p)=(2.6667,1.25,0,3.214256)$ ahead
of the shock and $(\rho,u,v,p)=(1,0,0,1.4)$ behind the shock. The boundary conditions are supersonic inflow (all variables specified)
on the left face of the background grid and supersonic outflow (all variables extrapolated) on the other faces of the background grid.

{
\newcommand{\figWidth}{8.1cm}
\newcommand{\trimfig}[2]{\trimPlotb{#1}{#2}{.1}{.02}{.1}{.1}}
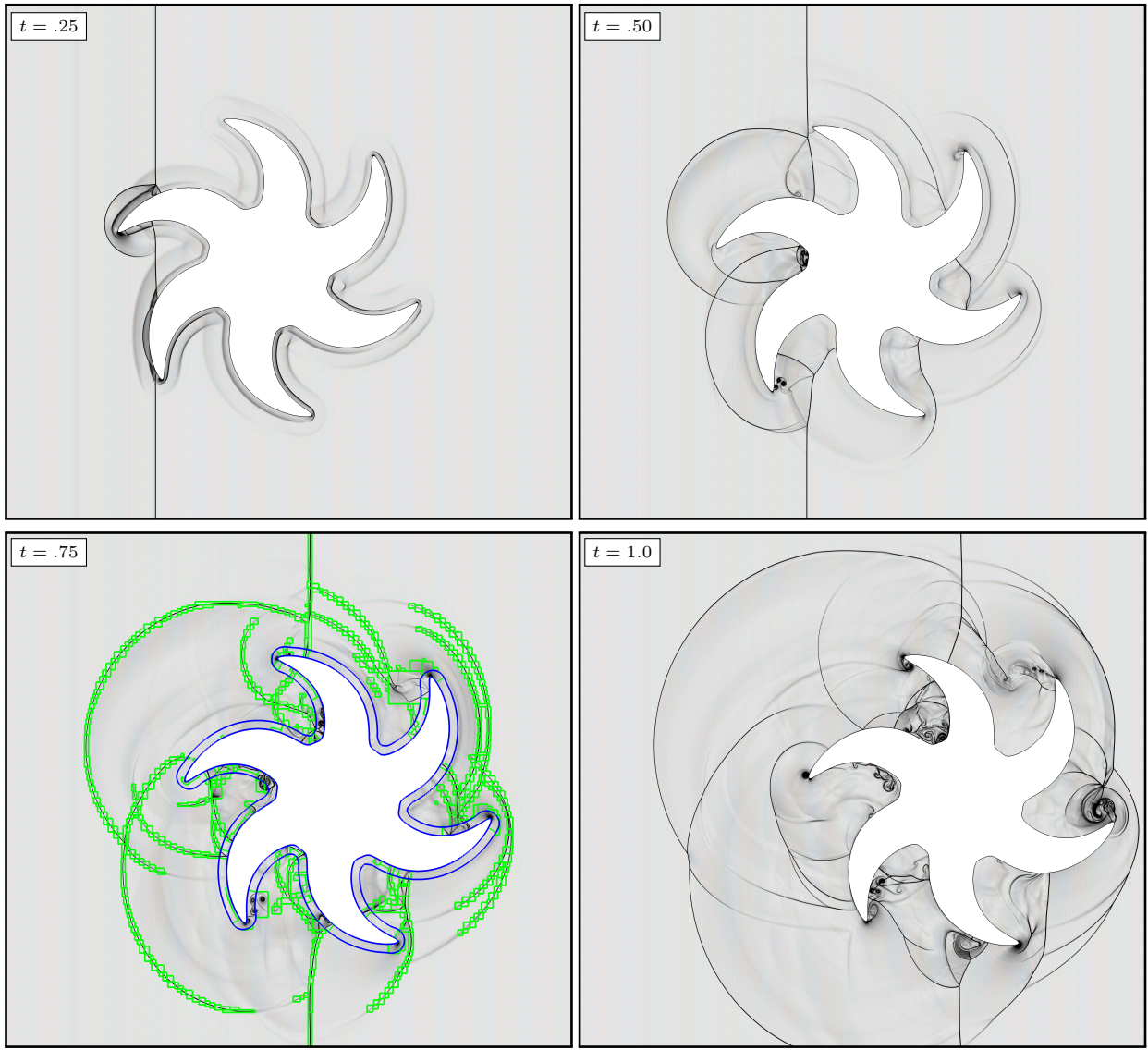
\begin{figure}[hbt]
\begin{center}
\begin{tikzpicture}[scale=1]
  \useasboundingbox (0,.65) rectangle (16.5,15.5);  % set the bounding box (so we have less surrounding white space)
  \draw ( 0.  ,7.6) node[anchor=south west,xshift=-4pt,yshift=+0pt] {\trimfig{images/starFish32l1r4Schlieren0p25}{\figWidth}};
  \draw ( 8.25,7.6) node[anchor=south west,xshift=-4pt,yshift=+0pt] {\trimfig{images/starFish32l1r4Schlieren0p5}{\figWidth}};
  \draw ( 0   ,0.0) node[anchor=south west,xshift=-4pt,yshift=+0pt] {\trimfig{images/starFish32l1r4SchlierenWithAMR0p75}{\figWidth}};
  \draw ( 8.25,0.0) node[anchor=south west,xshift=-4pt,yshift=+0pt] {\trimfig{images/starFish32l1r4Schlieren1p0}{\figWidth}};
  \draw (0.00,15.0) node[draw,fill=white,anchor=north west,xshift=+2pt,yshift=+1pt] {\scriptsize $t=.25$};
  \draw (8.25,15.0) node[draw,fill=white,anchor=north west,xshift=+2pt,yshift=+1pt] {\scriptsize $t=.50$};
  \draw (0.00, 7.5) node[draw,fill=white,anchor=north west,xshift=+2pt,yshift=-1pt] {\scriptsize $t=.75$};
  \draw (8.25, 7.5) node[draw,fill=white,anchor=north west,xshift=+2pt,yshift=-1pt] {\scriptsize $t=1.0$};
\end{tikzpicture}
\end{center}
  \caption{Shock impacting a {\em starfish} of zero mass. Schlieren images of the solution at times $t=0.25$, $0.5$, $0.75$ and $1.0$ computed
    with on grid $\Gcb^{(32\times4)}$. The boundaries of the AMR refinement grids are shown at $t=0.75$. }
  \label{fig:starFishEvolve}
\end{figure}
}

Fig.~\ref{fig:starFishEvolve} shows the evolution of the solution at four times. 
A complicated set of reflected and transmitted shocks form as the lead shock impacts the different arms of the body. These impacts cause
the body to rapidly accelerate at different times.
Numerous Mach stems, shock triple points and roll-ups can be identified.
Fig.~\ref{fig:shockImpactingStarFishRigidBodyInfo} shows the time history of the rigid body dynamics, comparing 
results from a coarse and fine grid. The coarse and fine grid results are in excellent agreement. 
As seen from Fig.~\ref{fig:shockImpactingStarFishRigidBodyInfo}, the initial impact of the shock
on the body causes it to accelerate to the right and slightly downward. The body begins to rotate in the counter-clockwise
direction although at later times it rotates in the clockwise direction. The time histories of rigid body velocity and
angular velocity undergo rapid changes changes at various times (e.g. when the lead shock impacts an arm).

{
\begin{figure}[hbt]
\newcommand{\figWidth}{7.5cm}
\newcommand{\trimfig}[2]{\trimFig{#1}{#2}{0.0}{0.}{.0}{.0}}
\begin{center}
\begin{tikzpicture}[scale=1]
  \useasboundingbox (0,.75) rectangle (8.,6.5);  % set the bounding box (so we have less surrounding white space)
  \draw ( 0, 0) node[anchor=south west,xshift=-4pt,yshift=+0pt] {\trimfig{images/starFishRigidBodyEvolution}{\figWidth}};
\end{tikzpicture}
\end{center}
  \caption{Shock impacting a {\em starfish} of zero mass: time histories of the center of mass, $(x_1,x_2)$, the velocity of the center of
  mass, $(v_1,v_2)$ and the angular velocity $w_3$. The colored lines are results from the coarse
  grid $\Gcb^{(16)}$ while  the black lines are results using the finer grid $\Gcb^{(16\times4)}$.}   
  \label{fig:shockImpactingStarFishRigidBodyInfo}
\end{figure}
}

{
\newcommand{\figWidth}{5.5cm}
\newcommand{\trimfig}[2]{\trimPlotb{#1}{#2}{.2}{.1}{.1}{.1}}
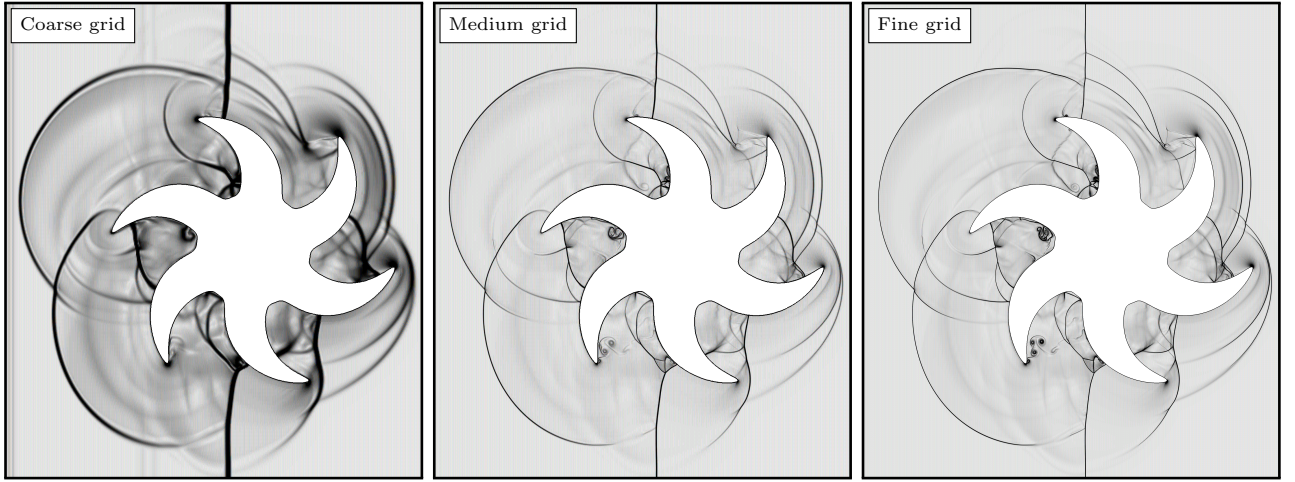
\begin{figure}[hbt]
\begin{center}
\begin{tikzpicture}[scale=1]
  \useasboundingbox (0,.5) rectangle (16.5,6.5);  % set the bounding box (so we have less surrounding white space)
  \draw ( 0.0, 0) node[anchor=south west,xshift=-4pt,yshift=+0pt] {\trimfig{images/starFish16Schlieren0p75}{\figWidth}};
  \draw ( 5.7, 0) node[anchor=south west,xshift=-4pt,yshift=+0pt] {\trimfig{images/starFish16l1r4Schlieren0p75}{\figWidth}};
  \draw (11.4, 0) node[anchor=south west,xshift=-4pt,yshift=+0pt] {\trimfig{images/starFish32l1r4Schlieren0p75}{\figWidth}};
  \draw (0.00,6.5) node[draw,fill=white,anchor=north west,xshift=+2pt,yshift=-3pt] {\scriptsize Coarse grid};
  \draw (5.70,6.5) node[draw,fill=white,anchor=north west,xshift=+2pt,yshift=-3pt] {\scriptsize Medium grid};
  \draw (11.4,6.5) node[draw,fill=white,anchor=north west,xshift=+2pt,yshift=-3pt] {\scriptsize Fine grid};
%
 % \draw (current bounding box.south west) rectangle (current bounding box.north east);
% grid:
%\draw[step=1cm,gray] (0,0) grid (16.5,6.5);
\end{tikzpicture}
\end{center}
  \caption{Shock impacting a {\em starfish} of zero mass. A comparison of schlieren images of the solution at $t=0.75$ 
      computed on the coarse grid $\Gcb^{(16)}$ (left),
      medium grid $\Gcb^{(16\times4)}$ (middle) and fine grid $\Gcb^{(32\times4)}$ (right).}
  \label{fig:starFishCompare}
\end{figure}
}

In Fig.~\ref{fig:starFishCompare} we compare the
schlieren images of the solution at $t=0.75$ from grids of different
resolutions. As for the ellipse, these result show good agreement in the basic structure of the
solution, with additional fine scale features appearing as the grid is
refined.

%-------------- Conclusions
\section{Conclusions}\label{sec:conclusions}

We have presented a stable partitioned scheme for the coupling of light rigid
bodies with inviscid compressible fluids. This new {\em added-mass} scheme, derived from 
an analysis of a fluid/rigid-body Riemann problem, defines the 
force on the rigid body as a sum of the usual fluid surface forces (due to the
pressure) plus an impedance weighted difference of the local fluid velocity and
the velocity of the rigid body. The form of the added-mass terms are thus elucidated. 
The scheme uses a standard upwind scheme and explicit time-stepping for the fluid and a
diagonally implicit Runge-Kutta scheme for the small system of ordinary differential equations governing
the motion of the rigid body. 
The scheme was analyzed in one-dimension and shown to be well defined 
and stable, with a {\em large} time-step,
even when the mass of the rigid body, $\mrb$, goes to zero. In contrast the traditional
FSI coupling algorithm has a time-step restriction that goes to zero as $\mrb$
approaches zero. Both a first-order accurate upwind scheme and a second-order accurate
Law-Wendroff scheme were analyzed. Numerical computations in one-dimension confirmed the
results of the theory and showed that the scheme was well behaved and accurate even when $\mrb=0$. 

The added-mass scheme was then extended to 
multiple space dimensions. The result was an {\em added-mass} form of the 
Newton-Euler equations for rigid-body motion that included four added-mass tensors.
The added-mass tensors couple the translational and angular velocities
of the body and are defined in terms surface integrals involving the fluid
impedance.
Numerical results in two-dimensions were presented for both smooth and discontinuous problems.
Second-order convergence was demonstrated
using a smoothly receding piston problem with known exact solution, and a smoothly accelerated
ellipse. The robustness of the scheme was demonstrated for the difficult 
cases of a shock impacting an ellipse and {\em starfish shaped} body, both with zero mass and zero moment of inertia.
The solutions to these problems were computed on a sequence of
grids of increasing resolution (utilizing adaptive mesh refinement), 
with the results on the different grids comparing favorably. 
There are a number of avenues open for follow-on work including the
extension of the current scheme to three dimensions and
viscous flows. In addition, we are currently
investigating approaches for coupling incompressible flow
with light bodies (both rigid and deformable).

\appendix
%-------------- a test problem
\section{An analytic solution for the one-dimensional FSI model problem} \label{sec:testProblem}

Consider the one-dimensional FSI problem illustrated in Fig.~\ref{fig:modelFig}
consisting of a rigid body embedded between two (linearized) fluid domains. The governing
equations are defined in Section~\ref{sec:analysis}. 
To simplify the presentation, we take the width of the rigid body to be zero, $\width=0$.
The solution for $\width>0$ follows easily from the solution for $\width=0$.
Let the displacements in the left and right domains be defined by
$\UL(x,t)=\int_0^t\uL(x,\tau)\, d\tau$ and $\UR(x,t)=\int_0^t\uR(x,\tau)\,
d\tau$ respectively, and let the rigid body position be given by
$U_{b}(t)$. The second-order wave equations
\begin{align}
  \partial_{tt}\UL(x,t)-\cL^2\partial_{xx}\UL(x,t) & =0, \quad \hbox{for $x<0$}, \\
  \partial_{tt}\UR(x,t)-\cR^2\partial_{xx}\UR(x,t) & =0, \quad \hbox{for $x>0$},
\end{align}
describe the evolution of $\UL$ and $\UR$.  The evolution of the rigid body
position is given by the rigid body equations of motion with the applied stress from the fluid
determining the force on the body
\begin{equation}
  \mass\partial_{tt}{U}_{b}(t)=\rho_Rc_R^2\partial_x\UR(0,t)-\rhoL\cL^2\partial_x\UL(0,t).
\end{equation}
Assume given initial conditions 
\begin{align}
  \UL(x,0) &=U_0(x),  & \VL(x,0) & =V_0(x), \quad \hbox{for $x<0$} &\\
  \UR(x,0)& =U_0(x), & \VR(x,0) & =V_0(x), \quad \hbox{for $x>0$} &\\
  U_b(0) &= U_0(0), & \partial_t{U}_b(0) &=V_0(0).
\end{align}
The exact solution for $x < 0$ can be written in terms of the d'Alembert solution as
\begin{equation}
  \UL(x,t)=f_L(x-c_Lt)+g_L(x+c_Lt) , 
  \label{eq:leftSol}
\end{equation}
where
\begin{align}
   f_L(\xi) = & \frac{1}{2}\left(U_0(\xi)-\frac{1}{c_L}\int_{0}^{\xi}V_0(s)\, ds\right) , \medskip\\
   g_L(\xi) = & \left\{ \begin{array}{cc}
     \frac{1}{2}\left(U_0(\xi)+\frac{1}{c_L}\int_{0}^{\xi}V_0(s)\, ds\right), \qquad & \hbox{for $\xi<0$}, \\
     U_{b}\left(\frac{\xi}{c_L}\right)-f_L(-\xi), \qquad & \hbox{for $\xi\ge 0$.}
   \end{array}\right.
 \end{align}
Likewise for $x>0$, the solution can be written 
\begin{equation}
  \UR(x,t)=f_R(x-c_Rt)+g_R(x+c_Rt)
  \label{eq:rightSol}
\end{equation}
where
\begin{align}
   f_R(\xi) = & \left\{ \begin{array}{cc} 
     \frac{1}{2}\left(U_0(\xi)-\frac{1}{c_R}\int_{0}^{\xi}V_0(s)\, ds\right), \qquad & \hbox{ for $\xi>0$ }, \\
     U_{b}\left(\frac{-\xi}{c_R}\right)-g_R(-\xi), \qquad & \hbox{for $\xi\le 0$},
   \end{array}\right.  \medskip\\
   g_R(\xi) = & \frac{1}{2}\left(U_0(\xi)+\frac{1}{c_R}\int_{0}^{\xi}V_0(s)\, ds\right).
 \end{align}
For $x\le-\cL t$ or $x \ge \cR t$ the solution is given by the usual d'Alembert solution for the Cauchy problem,
\[
  U(x,t) = \frac{1}{2}\big(U_0(x-c t)+U_0(x+c t)\big) + \frac{1}{2 c}\int_{x-c t}^{x+c t} V_0(s)\, ds,
\]
where $c=\cL$ or $c=\cR$ for the left and right domains, respectively. 
For $-\cL t < x < \cR t$, the left and right solutions are coupled to the
rigid body. For this case, the unknown interface position $U_{b}$ is
found as the solution to the linear ODE
\begin{equation}
  \mass\partial_{tt}{U}_{b}(t)+\left(z_R+z_L\right)\partial_t{U}_{b}(t) = g(t)
  \label{eq:coupledODE}
\end{equation}
where $g(t) = \rhoR\cR^2\partial_x{U}_0(\cR t)-\rhoL\cL^2\partial_x{U}_0(-\cL t)+z_R V_0(\cR t)+z_L V_0(-\cL t)$. Solutions to the corresponding homogeneous ODE $ \mass\partial_{tt}\eta(t)+\left(z_R+z_L\right)\partial_t\eta(t) = 0$ are easily found as
\begin{align*}
  \eta_1(t) = & e^{-t (z_R+z_L)/\mass}, \quad\text{and~~}   \eta_2(t) = 1.
\end{align*}
The method of variation of parameters can be used to derive an exact solution to (\ref{eq:coupledODE})
by looking for a solution of the form
\begin{equation}
  U_{b}(t) = k_1(t)\eta_1(t)+k_2(t)\eta_2(t).
  \label{eq:ODESol}
\end{equation}
The unknown functions $k_1(t)$ and $k_2(t)$ are found to be
\begin{align}
  k_1(t)= & -\int \frac{\eta_1(t)g(t)}{W[\eta_1,\eta_2](t)} \, dt + \hbox{const}, \\
  k_2(t) = & \int \frac{\eta_2(t)g(t)}{W[\eta_1,\eta_2](t)} \, dt + \hbox{const},
\end{align}
where $W[\eta_1,\eta_2](t)$ is the Wronskian of the homogeneous solutions. The integration constants are determined by the initial conditions. For a more detailed discussion on solution methods for (\ref{eq:coupledODE}) refer to~\cite{boyce97} for example.

A specific solution of the form (\ref{eq:ODESol}) is determined by specifying initial conditions $U_0(x)$ and $V_0(x)$. We illustrate with an example where an initial Gaussian pulse (of velocity and stress) moves from left to right and interacts with the rigid body and fluid domains as time progresses. Let the initial conditions be given as
\begin{align}
  U_0(x) = & -\frac{1}{4}\frac{\sqrt{\pi}\erf\left(\beta(x-x_0)\right)}{\beta},  \label{eq:test_u_ic}\\
  V_0(x) = & \frac{c_L}{2}\exp\left(-\beta^2(x-x_0)^2\right). \label{eq:test_v_ic}
\end{align}
Here $\beta>0$ and $x_0<0$ are parameters used to define the center and width of the initial pulse. Also notice that we envision the pulse to originate entirely in the left domain which is the reason for the appearance of $c_L$ in the initial condition definition.
The velocity of the rigid body can be found as
\begin{align}
  \dot{U}_{b}(t) =  & 
    \frac{z_R(\cR-\cL)\sqrt{\pi}}{4\cR\beta\mass}
    \exp\left(\frac{(z_L+z_R)(z_L+z_R-4\beta^2\mass\cR(\cR t-x_0))}{4\cR^2\mass^2\beta^2}\right) \nonumber \\
    & \qquad\qquad\left[
      \erf\left(\frac{z_L+z_R-2\cR\beta^2\mass (\cR t-x_0)}{2\cR\mass\beta}\right)-
      \erf\left(\frac{z_L+z_R +2\cR\beta^2\mass x_0}{2\cR\mass\beta}\right)
    \right]- \nonumber \\
    &\frac{z_L \sqrt{\pi}}{2\beta\mass}
    \exp\left(\frac{(z_L+z_R)(z_L+z_R -4\cL\beta^2\mass (\cL t+x_0))}{4\cL^2\mass^2\beta^2}\right) \nonumber \\
    & \qquad\qquad\left[
      \erf\left(\frac{z_L+z_R-2\cL\beta^2\mass (\cL t+x_0)}{2\cL\mass\beta}\right)-
      \erf\left(\frac{z_L+z_R-2\cL\beta^2\mass x_0}{2\cL\mass\beta}\right)
    \right]+ \nonumber \\
  & \frac{\cL}{2}\exp\left(-\beta^2 x_0^2-\frac{(z_L+z_R)t}{\mass}\right).
  \label{eq:exactSol}
\end{align}
Analytic expressions for the position and acceleration of the body 
are determined from~\eqref{eq:exactSol}, by integration and differentiation, respectively.
Note that (\ref{eq:exactSol}) is not easily evaluated numerically with standard math libraries as $\mass \to
0$. For the small mass case, (\ref{eq:exactSol}) can be evaluated using asymptotic expansions of the 
error functions as their arguments approach plus or minus infinity. The desired level of accuracy can be obtained
by appropriately truncating the resulting series expansion. In practice, we find that for $\mass \lessapprox 0.1$
such a procedure should be used.

% ------------------------------------------------------------------------------------------
\section{Examples of added mass matrices for constant fluid impedance} \label{sec:addedMassMatrices}

In this section we illustrate the form of the added mass matrices defined by ~\eqref{eq:AddedMassMatrixI}-\eqref{eq:AddedMassMatrixII},
for some common body shapes when the fluid impedance $z_f$ is taken to be constant.
We denote the entries of $\Avv$ by $\avv_{ij}$, the entries of $\Avw$ by $\avw_{ij}$ and the entries
of $\Aww$ by $\aww_{ij}$.
Note that in actual FSI simulations the coefficients of the added mass matrices (which depend
on a variable impedance) are computed for general bodies using numerical quadrature and so there is no need to determine these coefficients analytically. The results in this appendix are therefore presented for two reasons. The first is to help readers understand the nature of the added mass matrices for some simple bodies. The second is because the added mass matrices for simple bodies are useful in their own right, for example when treating flows with infinitesimally small embedded particles~\cite{parmar08}.

% ----------------------------------------------------
\subsection{Added-mass matrices for an ellipse} \label{sec:addedMassMatricesEllipse}

Consider a two dimensional ellipse with semi-axes of length $a$ and $b$ and center of mass $\xv_0=\zerov$. 
A point on the ellipse is $\xv(\theta)=[a\cos(\theta),~ b\sin(\theta),~ 0]^T$. The tangent to this point is
\[
  \frac{\xv_\theta}{\|{\xv_\theta} \|}
          =[-a\sin(\theta),~ b\cos(\theta),~ 0]^T/\sqrt{a^2\sin^2(\theta)+b^2\cos^2(\theta)} .
\]
Thus 
\begin{align*}
 \nv&=[b\cos(\theta),~ a\sin(\theta),~ 0]^T/\sqrt{a^2\sin^2(\theta)+b^2\cos^2(\theta)}, \\
 \yv&=[a\cos(\theta),~ b\sin\theta,~0]^T,
\end{align*}
and 
\begin{align*}
   Y\nv = [0,~0,~ (a^2-b^2)\cos(\theta)\sin(\theta)]^T/\sqrt{a^2\sin^2(\theta)+b^2\cos^2(\theta)}.
\end{align*}
Thus (leaving out some zero rows and columns which do not apply in two-dimensions)
\begin{align}
  \nv \nv^T =  
    \frac{1}{a^2\sin^2(\theta)+b^2\cos^2(\theta)}
    \begin{bmatrix}
      b^2\cos^2(\theta)              & a b \cos(\theta)\sin(\theta) \\
      a b \cos(\theta)\sin(\theta)   & a^2\sin^2(\theta)  
  \end{bmatrix}, \\ 
  Y\nv (Y\nv)^T = \frac{1}{a^2\sin^2(\theta)+b^2\cos^2(\theta)}
     \begin{bmatrix}
      0 & 0 & 0 \\
      0 & 0 & 0 \\
      0 & 0 & (a^2-b^2)^2\cos^2(\theta)\sin^2(\theta)
  \end{bmatrix} . 
\end{align}
The increment in arclength is $ds=\sqrt{d\xv\cdot d\xv} = \sqrt{a^2\sin^2(\theta)+ b^2\cos^2(\theta)}~ d\theta$.
Thus
\begin{alignat}{3}
    \Avv &= 
                   \begin{bmatrix}
                     \avv_{11} & \avv_{12}    \\
                     \avv_{21} & \avv_{22}
                    \end{bmatrix}    
    =    \int_0^{2\pi} \frac{z_f}{\sqrt{a^2\sin^2(\theta)+b^2\cos^2(\theta)}}
                   \begin{bmatrix}
                     b^2\cos^2(\theta) & a b \cos(\theta)\sin(\theta)   \\
                     a b \cos(\theta)\sin(\theta)    & a^2\sin^2(\theta)
                    \end{bmatrix} \,d\theta ,
\end{alignat}
\begin{alignat}{3}
    \Aww &= 
               \begin{bmatrix}
                     0    & 0   & 0  \\
                     0    & 0   & 0 \\
                     0    & 0   & \aww_{33}
                    \end{bmatrix}
     =  \int_0^{2\pi}  \frac{z_f}{\sqrt{a^2\sin^2(\theta)+b^2\cos^2(\theta)}} \begin{bmatrix}
                     0    & 0   & 0  \\
                     0    & 0   & 0 \\
                     0    & 0   & (a^2-b^2)^2\cos^2(\theta)\sin^2(\theta)
                    \end{bmatrix}\,d\theta ,
\end{alignat}
and 
\begin{alignat}{3}
    \Avw &= (\Awv)^T = 
               \begin{bmatrix}
                     0    & 0   & \avw_{13}  \\
                     0    & 0   & \avw_{23} \\
                     0    & 0   & 0
               \end{bmatrix}
     =  \int_0^{2\pi}  \frac{z_f}{\sqrt{a^2\sin^2(\theta)+b^2\cos^2(\theta)}} \begin{bmatrix}
                     0    & 0   & b (a^2-b^2)\cos^2(\theta)\sin(\theta)  \\
                     0    & 0   & a (a^2-b^2)\cos(\theta)\sin^2(\theta) \\
                     0    & 0   & 0 
                    \end{bmatrix}\,d\theta .
\end{alignat}
Values for $\avv_{11}$, $\avv_{22}$, and $\aww_{33}$, 
(which can be written in terms of elliptic integrals) for some ratios of $b$ to $a$ are 
given in Figure~\ref{fig:ellipseIntegralCoefficients}. 
The values for $\avv_{12}$, $\avw_{13}$ and $\avw_{23}$
are zero for
uniform $z_f$ (but can be non-zero when $z_f$ varies). 
Note that for the case of a circle, $a=b$, $\avv_{11} = \avv_{22} = (z_f/a) \pi a^2$ where $\pi a^2$ is the area of the circle.
Compare this result to that for the sphere in Section~\ref{sec:addedMassMatricesEllipsoid}.
\begin{figure}[hbt]
\begin{center}
\begin{tabular}{|l|c|c|c|c|} \hline 
               & $b=a$      & $b=a/2$    &  $b=a/10$  &  $b=a/100$   \\ \hline
$\avv_{11}$    & $\pi z_f a$   & $1.26 z_f a$   &  $.108 z_f a$  &  $.0020 z_f a$   \\ \hline
$\avv_{22}$    & $\pi z_f a$   & $3.58 z_f a$   &  $3.96 z_f a$  &  $3.99  z_f a$   \\ \hline
$\aww_{33}$    & $0$        & $.581 z_f a^3$   &  $1.27 z_f a^3$  &  $1.33  z_f a^3$   \\ \hline
\end{tabular}
\caption{Components of the added-mass matrices for an ellipse for various values of $b/a$ with constant $z_f$. 
Values for $b/a\ne1$ are approximate.}
\label{fig:ellipseIntegralCoefficients}
\end{center}
\end{figure}

% ----------------------------------------------------
\subsection{Added-mass matrices for an ellipsoid} \label{sec:addedMassMatricesEllipsoid}

We consider an ellipsoid with semi-axes of length $a$, $b$ and $c$ and center of mass at $\xv_0=\zerov$.
A point on the surface of the ellipsoid is given by
\begin{align*}
   \xv(\theta,\phi) &= [ a \sin(\phi) \cos(\theta), b \sin(\phi) \sin(\theta), c\cos(\phi) ]^T, \quad \phi\in[0,\pi], \quad
           \theta\in[0,2\pi].
\end{align*}
From this formula it is straightforward to determine $\nv$ and $Y\nv$ in the formulae for the added mass matrices.
For a sphere of radius $a$, i.e. $a=b=c$, we get ($4\pi/3\approx 4.18879$) 
\begin{alignat}{3}
    \Avv &= 
          z_f a^2 \begin{bmatrix}
                   4\pi/3 & 0        & 0  \\
                     0    & 4\pi/3   & 0 \\
                     0    & 0        & 4\pi/3 
                    \end{bmatrix}, 
     \quad 
    \Avw &= 
          z_f a^3 \begin{bmatrix}
                     0    & 0   & 0  \\
                     0    & 0   & 0 \\
                     0    & 0   & 0 
                    \end{bmatrix}, 
     \quad 
    \Aww &= 
          z_f a^4 \begin{bmatrix}
                     0    & 0   & 0  \\
                     0    & 0   & 0 \\
                     0    & 0   & 0 
                    \end{bmatrix} . 
\end{alignat}
Recall that the volume of the sphere is $V=4\pi a^3/3$ so that $\avv_{ii}= (z_f/a) V$, $i=1,2,3$. 
The rotational added-mass entries $\aww_{ii}$, $i=1,2,3$ are zero since a rotating sphere exerts no
force on the adjacent (inviscid) fluid. 

For $b=a$, $c=2 a$, we can compute the added-mass matrix entries approximately by quadrature giving the 
values
\begin{alignat}{3}
    \Avv &= 
          z_f a^2 \begin{bmatrix}
                     9.254     & 0   & 0  \\
                     0    & 9.254    & 0 \\
                     0    & 0   & 2.971 
                    \end{bmatrix}, 
     \quad 
    \Avw &= 
          z_f a^3 \begin{bmatrix}
                     0    & 0   & 0  \\
                     0    & 0   & 0 \\
                     0    & 0   & 0 
                    \end{bmatrix}, 
     \quad 
    \Aww &= 
          z_f a^4 \begin{bmatrix}
                     4.712    & 0   & 0  \\
                     0    & 4.712    & 0 \\
                     0    & 0   & 0 
                    \end{bmatrix} . 
\end{alignat}
This ellipsoid is longest along the $z$-axis and has circular cross-sections for $z$ constant. The values
of $\avv_{11}$ and $\avv_{22}$ are larger than $\avv_{33}$ which indicates that the added mass is
larger for translational motions in the $x$- or $y$-directions compared to the $z$-direction. In other
words is takes more force to move the ellipsoid in the $x-$ or $y-$directions compared to the $z$-direction. This is consistent
with the shape of the ellipsoid which is longest along the $z$-axis and thus has a greater effective
cross-sectional area when viewed from the $x-$ or $y-$directions.

For $b=2 a$, $c=3 a$, the added-mass matrix entries are given approximately by
\begin{alignat}{3}
    \Avv &= 
          z_f a^2 \begin{bmatrix}
                     32.307    & 0   & 0  \\
                     0    & 11.023   & 0 \\
                     0    & 0   & 5.552 
                    \end{bmatrix}, 
     \quad 
    \Avw &= 
          z_f a^3 \begin{bmatrix}
                     0    & 0   & 0  \\
                     0    & 0   & 0 \\
                     0    & 0   & 0 
                    \end{bmatrix}, 
     \quad 
    \Aww &= 
          z_f a^4 \begin{bmatrix}
                     6.840    & 0   & 0  \\
                     0    & 53.511   & 0 \\
                     0    & 0   & 15.963
                    \end{bmatrix} . 
\end{alignat}
In this case, the translational added mass $\avv_{11}$ is largest, consistent with the effective cross-sectional area being largest
when the ellipsoid is viewed in the $x$-direction. In other words it takes more force to move the ellipsoid in the $x$-direction,
compared to the other directions. 

\subsection{Added-mass matrices for a rectangle}
  The added-mass matrices for bodies with piecewise constant surface normals are easily computed. Consider the rectangular body of length $l_x$, height $l_y$, and center of mass $\xv=0$ given by $\Rc = \{ (x,y) \,|\, -l_x/2 \le x \le l_x/2, ~ -l_y/2 \le y \le l_y/2 \}$. The added-mass matrices for this case are
\begin{alignat}{3}
    \Avv &= 
          z_f  \begin{bmatrix}
          2l_y & 0 & 0 \\
          0 & 2l_x & 0\\
          0 & 0 & 0
       \end{bmatrix}, 
     \quad 
    \Avw &= 
          z_f \begin{bmatrix}
                     0    & 0   & 0  \\
                     0    & 0   & 0 \\
                     0    & 0   & 0 
                    \end{bmatrix}, 
     \quad 
    \Aww &= 
          z_f \begin{bmatrix}
                     0    & 0   & 0  \\
                     0    & 0   & 0 \\
                     0    & 0   & \frac{1}{6}\left(l_x^3+l_y^3\right)
                    \end{bmatrix} . 
\end{alignat}

\subsection{Added-mass matrices for a rectangular prism}
  Finally, consider the rectangular prism with dimensions $l_x$, $l_y$, $l_z$, and center of mass $\xv=0$ given by ${\mathcal P} = \{ (x,y,z) \,|\, -l_x/2 \le x \le l_x/2, ~ -l_y/2 \le y \le l_y/2, ~ -l_z/2 \le z \le l_z/2, \}$. The added-mass matrices are
\begin{alignat}{3}
    \Avv &= 
          z_f  \begin{bmatrix}
          2l_yl_z & 0 & 0 \\
          0 & 2l_xl_z & 0\\
          0 & 0 & 2l_xl_y
       \end{bmatrix}, 
     \quad 
    \Avw &= 
          z_f \begin{bmatrix}
                     0    & 0   & 0  \\
                     0    & 0   & 0 \\
                     0    & 0   & 0 
                    \end{bmatrix}, 
     \quad 
    \Aww &= 
          z_f \begin{bmatrix}
                     \frac{l_x}{6}\left(l_y^3+l_z^3\right)    & 0   & 0  \\
                     0    & \frac{l_y}{6}\left(l_x^3+l_z^3\right)   & 0 \\
                     0    & 0   & \frac{l_z}{6}\left(l_x^3+l_y^3\right)
                    \end{bmatrix} . 
\end{alignat}

\bibliographystyle{elsart-num}
\bibliography{journal-ISI,jwb}

\end{document}